\newtheorem{thm}{Theorem}[section]
\newtheorem{cor}[thm]{Corollary}
\newtheorem{lem}[thm]{Lemma}
\newtheorem{prop}[thm]{Proposition}
\newtheorem{quest}[thm]{Question}
\theoremstyle{definition}
\theoremstyle{property}
\theoremstyle{remark}
\newtheorem{rem}[thm]{Remark}
\numberwithin{equation}{section}
\definecolor{ceruleanblue}{rgb}{0.16, 0.32, 0.75}
\begin{document}

\title[Mayer-Vietoris systems and their applications]{Mayer-Vietoris systems and their applications}

\author{Lingxu Meng}
\address{Department of Mathematics, North University of China, Taiyuan, Shanxi 030051,  P. R. China}
\email{menglingxu@nuc.edu.cn}%

\subjclass[2010]{Primary 53C56; Secondary 14F25, 32C35}
\keywords{Mayer-Vietoris system; Leray-Hirsch theorem; blow-up formula; local system; locally free sheaf; Dolbeault cohomology}

% -----------------------------------------------------------

\begin{abstract}
  We introduce notions such as  cdp presheaf,  cds precosheaf, Mayer-Vietoris system,  and investigate their properties. As applications, we study  cohomologies with values in local systems on smooth manifolds and Dolbeault cohomologies with values in locally free sheaves on complex manifolds, where the compactness is not necessary for both cases. In particular, we write out explicit blow-up formulas for these cohomologies. Moreover, we compare the blow-up formula given by Rao, S., Yang, S. and Yang, X.-D. with ours, and then deduce that their formula is still an isomorphism in the noncompact case.
\end{abstract}

% -----------------------------------------------------------
\maketitle
% -----------------------------------------------------------

\tableofcontents
%==================================
\section{Introduction}
The Mayer-Vietoris sequences ae classical results in algebraic topology. They exist in various homology and cohomology theories satisfying the Eilenberg-Steenrod axioms (\cite{ES}). Moreover, it holds in topological K-theory (\cite{K}).  In \cite{M1, M2, M3}, we studied the Morse-Novikov cohomology, Dolbeault cohomology and gave their explicit formulas of blow-ups. There, we repeatedly used a technique that consider the local cases first, and then, extend them to global cases via Mayer-Vietoris sequences. On the smooth manifolds, we systematically develop this method  by the language of presheaves and precosheaves as follows.
\begin{thm}\label{1.1}
Let $X$ be a connected smooth manifold and $\mathcal{M}^*$, $\mathcal{N}^*$  M-V systems of cdp presheaves or cds precosheaves on $X$. Assume that $F^*:\mathcal{M}^*\rightarrow \mathcal{N}^*$ is a  M-V morphism  satisfying the following hypothesis:

\emph{(*)} There exists a basis $\mathfrak{U}$ of topology of $X$, such that, $F^*(U_1\cap...\cap U_l)$ is an isomorphism  for any finite $U_1,..., U_l\in \mathfrak{U}$.\\
Then $F^*$ is an isomorphism.
\end{thm}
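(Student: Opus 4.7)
The plan is a three-stage extension of the class of open sets on which $F^*$ is known to be an isomorphism: from finite intersections of basis elements (where the hypothesis applies directly), to finite unions of such intersections (via Mayer-Vietoris and the Five Lemma), and finally to an arbitrary open set via a coloring/refinement argument that is available because $X$ is a finite-dimensional paracompact manifold.

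First, by hypothesis (*), $F^*(V)$ is an isomorphism on any $V = U_1 \cap \cdots \cap U_l$ with $U_i \in \mathfrak{U}$. I would then induct on $k$ to prove that $F^*(V_1 \cup \cdots \cup V_k)$ is an isomorphism whenever each $V_i$ is a finite intersection of elements of $\mathfrak{U}$. For the inductive step, put $V = V_1 \cup \cdots \cup V_{k-1}$ and $W = V_k$; then $V \cap W = \bigcup_{i<k}(V_i \cap W)$ is again a union of $k-1$ finite intersections of basis elements, so $F^*$ is an isomorphism on $V$ by induction, on $W$ by the base case, and on $V\cap W$ by induction. The defining Mayer-Vietoris sequences of $\mathcal{M}^*$ and $\mathcal{N}^*$ for the cover $\{V,W\}$, together with the M-V morphism $F^*$ and the Five Lemma, then give that $F^*(V \cup W)$ is an isomorphism.

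The main step, and the one I expect to be the principal obstacle, is the passage from finite unions to an arbitrary open $U \subset X$. Since $X$ is a connected smooth manifold of some fixed finite dimension $d$, it is paracompact, second countable, and of covering dimension $\leq d$. Applying the Ostrand-type refinement theorem to the cover of $U$ by basis elements contained in $U$ yields a countable open refinement $\{U_n\}_{n \in \mathbb{N}}$ together with a partition $\mathbb{N} = I_1 \sqcup \cdots \sqcup I_{d+1}$ such that $\{U_n : n \in I_j\}$ is pairwise disjoint for each $j$. Set $T_j = \bigsqcup_{n \in I_j} U_n$, so that $U = T_1 \cup \cdots \cup T_{d+1}$. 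The disjoint-union axiom built into the definitions of cdp presheaf and cds precosheaf (turning arbitrary coproducts of opens into products, resp.\ direct sums, of sections) forces $F^*(T_j)$ to be determined componentwise by $\{F^*(U_n)\}_{n \in I_j}$, each of which is an isomorphism by the first step; hence $F^*(T_j)$ is an isomorphism. The same axiom applied to $T_{j_1} \cap \cdots \cap T_{j_l} = \bigsqcup (U_{n_1} \cap \cdots \cap U_{n_l})$, whose summands are finite intersections of basis elements, shows that $F^*$ is an isomorphism on every finite intersection of the $T_j$'s.

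With these inputs in hand, the finite-union Mayer-Vietoris induction of the second step applies to the finite family $\{T_1, \ldots, T_{d+1}\}$ and terminates after $d+1$ steps, giving that $F^*(U)$ is an isomorphism. The technical hurdles to watch are: (i) confirming that the coproduct compatibility I invoke in stage three is indeed part of the definition of cdp and cds (otherwise the statement already fails on disjoint unions of basis elements, so this should be safe); and (ii) executing the Ostrand coloring cleanly enough that the M-V sequences and the morphism $F^*$ between them assemble naturally at each stage of the finite induction.
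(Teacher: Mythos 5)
Your overall architecture is the same as the paper's: a ``glued principle'' that upgrades hypothesis~(*) first to finite unions of finite intersections of basis elements via the Mayer--Vietoris ladder and the five lemma, and then to arbitrary open sets via a decomposition into finitely many countable disjoint unions, using the cdp/cds axiom. Your first two stages are correct and coincide with the paper's induction (step $(c.1)$ in its Lemma on the glued principle); your handling of $V\cap W=\bigcup_{i<k}(V_i\cap W)$ as again a union of $k-1$ finite intersections is exactly right, as is the observation that the countable-disjoint-union axiom is natural and so transports isomorphisms across disjoint unions.

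The gap is in stage three, at precisely the point you flagged. Ostrand's theorem (and likewise the manifold-theoretic decomposition the paper cites) produces a refinement whose members are merely \emph{contained in} members of the given cover; it does not produce a refinement whose members are themselves basis elements, let alone finite intersections of basis elements. For an arbitrary basis $\mathfrak{U}$ you cannot replace a refinement member $W_n\subseteq U_{f(n)}$ by $U_{f(n)}$ without destroying the disjointness of the color classes, and hypothesis~(*) says nothing about $F^*$ on an arbitrary open subset of a basis element. So the assertions that ``each $F^*(U_n)$ is an isomorphism by the first step'' and that the summands of $T_{j_1}\cap\cdots\cap T_{j_l}$ are finite intersections of basis elements are unjustified as stated. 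The repair is the one the paper uses: take the disjoint pieces to be \emph{finite unions of basis elements} rather than single basis elements (this is what the exhaustion-by-compacta argument, Greub--Halperin--Vanstone, p.~16, Prop.~II, actually yields: $X=V_1\cup\dots\cup V_l$ with each $V_i$ a countable disjoint union of finite unions of elements of $\mathfrak{U}$). A finite intersection of finite unions of basis elements is, by distributivity, a finite union of finite intersections of basis elements, which is exactly the class your stage two already covers; the finite intersections $T_{j_1}\cap\cdots\cap T_{j_l}$ are then countable disjoint unions of such sets, and the rest of your argument (disjoint-union axiom, then the finite Mayer--Vietoris induction over the $l$ pieces) goes through unchanged. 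One further small point: the paper also records that the conclusion must hold for every open $V\subseteq X$, not just $V=X$; this follows by rerunning the argument with the basis $\{U\in\mathfrak{U}\,|\,U\subseteq V\}$, which your stage three covers implicitly but should be stated.
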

As applications of this theorem, we generalize the main results in \cite{CY, RYY, RYY2}. During our preparation of the present work, Rao, S., Yang, S. and Yang, X.-D. (\cite{RYY2}) gave a blow-up formula for bundle-valued Dolbeault cohomology on compact complex manifolds. With the similar way in \cite{RYY2}, Chen, Y. and Yang, S. (\cite{CY}) gave a blow-up formula for cohomology with values in local systems on compact complex manifolds. By Theorem \ref{1.1}, we will give other formulas in a different way and remove the compactness. Moreover, we will prove their formulas are still isomorphic on the noncompact bases.

Let $\pi:\widetilde{X}\rightarrow X$ be the blow-up of a connected complex manifold $X$ along a connected complex submaifold $Y$. We know $\pi|_E:E=\pi^{-1}(Y)\rightarrow Y$ is the projective bundle $\mathbb{P}(N_{Y/X})$ associated to the normal bundle $N_{Y/X}$ over $Y$. Assume that $i_Y:Y\rightarrow X$, $i_E:E\rightarrow \widetilde{X}$ are inclusions and $r=\textrm{codim}_{\mathbb{C}}Y$. Set $t=\frac{i}{2\pi}\Theta(\mathcal{O}_{E}(-1))\in \mathcal{A}^{1,1}(E)$, where $\mathcal{O}_{E}(-1)$ is the universal line bundle  on $E={\mathbb{P}(N_{Y/X})}$ and  $\Theta(\mathcal{O}_{E}(-1))$ is the Chern curvature of a hermitian metric on $\mathcal{O}_{E}(-1)$. Clearly, $\textrm{d}t=0$ and $\bar{\partial}t=0$.
\begin{thm}[Theorem \ref{blow-up}, \ref{important} and Section 6.3]\label{1.2}
Assume that $X$, $Y$, $\pi$, $\widetilde{X}$, $E$, $t$, $i_Y$, $i_E$ are defined as above.  Then
\begin{displaymath}
\pi^*+\sum_{i=0}^{r-2}(i_E)_*\circ (h^i\cup)\circ (\pi|_E)^*
\end{displaymath}
gives isomorphisms
\begin{displaymath}
H^k(X,\mathcal{V})\oplus \bigoplus_{i=0}^{r-2}H^{k-2-2i}(Y,i_Y^{-1}\mathcal{V})\tilde{\rightarrow} H^k(\widetilde{X}, \pi^{-1}\mathcal{V}),
\end{displaymath}
\begin{displaymath}
H_{c}^k(X,\mathcal{V})\oplus \bigoplus_{i=0}^{r-2}H_{c}^{k-2-2i}(Y,i_Y^{-1}\mathcal{V})\tilde{\rightarrow} H_{c}^k(\widetilde{X},\pi^{-1}\mathcal{V}),
\end{displaymath}
for any $k$, where $h=[t]\in H^2(E,\mathbb{R})$ or $H^2(E,\mathbb{C})$  and $\mathcal{V}$ is a local system of $\mathbb{R}$ or $\mathbb{C}$-modules of finite rank on $X$, and an isomorphism
\begin{displaymath}
H^{p,q}(X,\mathcal{E})\oplus \bigoplus_{i=0}^{r-2}H^{p-1-i,q-1-i}(Y,i_Y^*\mathcal{E})\tilde{\rightarrow} H^{p,q}(\widetilde{X},\pi^*\mathcal{E}),
\end{displaymath}
for any $p$, $q$, where $h=[t]\in H^{1,1}(E)$ and $\mathcal{E}$ is a locally free sheaf of $\mathcal{O}_X$-modules of finite rank on $X$.

In the inverse direction,
\begin{displaymath}
\phi^{\mathcal{V}}:H^k(\widetilde{X}, \pi^{-1}\mathcal{V})\tilde{\rightarrow} H^k(X,\mathcal{V})\oplus \bigoplus_{i=0}^{r-2}H^{k-2-2i}(Y,i_Y^{-1}\mathcal{V}),
\end{displaymath}
\begin{displaymath}
\phi_c^{\mathcal{V}}:H_{c}^k(\widetilde{X},\pi^{-1}\mathcal{V})\tilde{\rightarrow} H_{c}^k(X,\mathcal{V})\oplus \bigoplus_{i=0}^{r-2}H_{c}^{k-2-2i}(Y,i_Y^{-1}\mathcal{V}),
\end{displaymath}
and
\begin{displaymath}
\phi:H^{p,q}(\widetilde{X},\pi^*\mathcal{E})\rightarrow H^{p,q}(X,\mathcal{E})\oplus \bigoplus_{i=0}^{r-2}H^{p-1-i,q-1-i}(Y,i_Y^*\mathcal{E})
\end{displaymath}
are isomorphisms, where $\phi^{\mathcal{V}}$, $\phi_c^{\mathcal{V}}$ and $\phi$ are defined by Section 6.2 \emph{(\ref{def})} and Section 6.3.
\end{thm}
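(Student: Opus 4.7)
The strategy is to apply Theorem \ref{1.1} three times, once for each of the three flavors of cohomology appearing in the statement. For the local-system version, I would introduce, for each open $U\subset X$,
\[
\mathcal{M}^k(U) = H^k(U,\mathcal{V}|_U) \oplus \bigoplus_{i=0}^{r-2} H^{k-2-2i}(U\cap Y, i_Y^{-1}\mathcal{V}|_{U\cap Y}),
\]
\[
\mathcal{N}^k(U) = H^k(\pi^{-1}(U),\pi^{-1}\mathcal{V}|_{\pi^{-1}(U)}),
\]
with restriction maps induced by the functoriality of sheaf cohomology. The Mayer--Vietoris sequences for cohomology with coefficients in a local system make $\mathcal{M}^*$ and $\mathcal{N}^*$ into M-V systems of cdp presheaves; the analogous precosheaf construction $U\mapsto H^k_c(\cdot)$ produces cds precosheaves for the compactly supported variant; and the Dolbeault analogue with $\mathcal{E}$ replacing $\mathcal{V}$ proceeds in the same way. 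In each case, take $F^*$ to be the M-V morphism $\pi^*+\sum_{i=0}^{r-2}(i_E)_*\circ(h^i\cup)\circ(\pi|_E)^*$, which respects the presheaf/precosheaf structure because $\pi$ is proper and $h$ restricts compatibly to exceptional divisors above restrictions.

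The core step is the verification of hypothesis (*) of Theorem \ref{1.1} on a suitable basis $\mathfrak{U}$ of $X$. I would choose $\mathfrak{U}$ to consist of polydisk neighborhoods $U$ on which (i) $\mathcal{V}|_U$ (resp.\ $\mathcal{E}|_U$) is trivial, (ii) $U\cap Y$ is either empty or a coordinate slice, and (iii) $N_{Y/X}|_{U\cap Y}$ is trivial; such a basis is closed under finite intersections up to the same conditions. When $U\cap Y=\emptyset$, $\pi$ restricts to a biholomorphism over $U$ and the formula is trivial. When $U\cap Y\neq\emptyset$, $\pi^{-1}(U)$ is the blow-up of a polydisk along a coordinate slice, and $\pi^{-1}(U)\cap E\cong (U\cap Y)\times \mathbb{P}^{r-1}$ is a trivial projective bundle. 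Here $F^*(U)$ reduces, via the trivialization of $\mathcal{V}$ (resp.\ $\mathcal{E}$) and the contractibility of $U$ and $U\cap Y$, to the classical projective bundle formula for $\mathbb{P}^{r-1}$, which is the Leray--Hirsch isomorphism; for the Dolbeault case one instead uses the bundle-valued Leray--Hirsch formula in the Dolbeault category together with the Dolbeault lemma on the polydisk factor. This yields the local isomorphism, and Theorem \ref{1.1} then propagates it to all of $X$.

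For the inverse direction, the maps $\phi^{\mathcal{V}}$, $\phi^{\mathcal{V}}_c$, $\phi$ are assembled from $\pi^*$, $\pi_*$ (with proper supports in the compactly supported case) and Gysin-type contributions along $E$ as constructed in Sections 6.2 and 6.3. Since these maps are also compatible with the M-V system structure and agree on the basis $\mathfrak{U}$ with the explicit inverse of the local Leray--Hirsch isomorphism described above, Theorem \ref{1.1} again applies and gives that they are global isomorphisms; it then suffices to check $F^*\circ \phi = \mathrm{id}$ on the basis $\mathfrak{U}$, which is again a Leray--Hirsch computation on a trivial projective bundle.

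The main obstacle will be verifying that the M-V systems interact correctly with the coefficient systems $\mathcal{V}$ and $\mathcal{E}$: one must ensure that restrictions of local systems and locally free sheaves behave naturally under the Mayer--Vietoris connecting homomorphisms, and that the projective bundle formula can be twisted by these (locally trivial, but globally non-trivial) coefficients without disturbing the morphism $F^*$. In particular, the fact that $h=[t]$ with $t$ closed and $\bar\partial$-closed is crucial to ensure that cup product with $h^i$ is well-defined at the level of all three cohomology theories and commutes with the M-V morphisms. Once this naturality is pinned down, the proof becomes a clean application of Theorem \ref{1.1} combined with the Leray--Hirsch theorem for trivial $\mathbb{P}^{r-1}$-bundles.
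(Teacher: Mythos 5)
Your overall architecture --- realize both sides as M-V systems, check that the candidate map is a M-V morphism, and invoke Theorem \ref{1.1} --- is exactly the paper's, but the two places where the real work happens are not carried out correctly. First, the local verification of hypothesis (*). You take $\mathfrak{U}$ to be adapted polydisks and argue that on such a $U$ the formula ``reduces to the classical projective bundle formula for $\mathbb{P}^{r-1}$.'' This fails on two counts. Hypothesis (*) demands the isomorphism on \emph{arbitrary finite intersections} $U_1\cap\cdots\cap U_l$ of basis elements, and an intersection of polydisks from different charts is neither a polydisk nor contractible, so the reduction to a single $\mathbb{P}^{r-1}$ is unavailable there. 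Moreover, even on a single polydisk $U$, the restricted blow-up $\pi^{-1}(U)\to U$ is not a fiber bundle, so Leray--Hirsch does not apply to it directly; and for the Dolbeault statement one cannot use contractibility at all, since Dolbeault cohomology is not a homotopy invariant. The paper's local input is genuinely stronger: it chooses a $\mathcal{V}$-constant (resp. $\mathcal{E}$-free) basis, so that on any finite intersection the coefficients are constant (resp. free), and then quotes the already established \emph{untwisted} blow-up formulas (\cite{M1}, Thm.~1.3 and \cite{M2}, Thm.~1.2), which hold for arbitrary open subsets, not just contractible ones. That untwisted local theorem is the substantive ingredient your proposal does not supply.

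Second, the inverse direction. To run Theorem \ref{1.1} on $\phi$ you must first exhibit $\phi$ as a M-V morphism; this is not automatic from its definition via the decomposition (\ref{rep}), and the paper needs Lemma \ref{represent} --- the explicit expression of the components $\alpha^{p-i,q-i}$ through $(\pi|_E)_*$, $h^j\cup$, $i_E^*$ and the recursively defined polynomials $P^i_j$ --- precisely for this purpose. The local check is then not ``again a Leray--Hirsch computation'': it rests on the self-intersection identity $i_E^*i_{E*}\sigma=h\cup\sigma$, which the paper can only establish when the coefficients are free and $U\cap Y$ is Stein (Proposition \ref{key}), whence the choice of a Stein $\mathcal{E}$-free basis. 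Finally, your plan to verify $F^*\circ\phi=\mathrm{id}$ globally from a check on basis elements cannot work as stated: Theorem \ref{1.1} upgrades ``locally an isomorphism'' to ``an isomorphism,'' but it does not upgrade ``locally equal to the identity'' to ``equal to the identity,'' and indeed whether $\phi$ and $\psi$ are globally inverse to each other is left open in the paper (Question \ref{prob.1}). The paper only concludes that $\phi$ is locally inverse to $\psi$, hence locally an isomorphism, hence globally an isomorphism by Theorem \ref{1.1}.
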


In Section 2, we introduce the notions such as cdp presheaf, cds precosheaf, M-V system and prove Theorem \ref{1.1}. In Section 3,  we define operators on forms and currents with values in local systems and locally free sheaves. Moreover, we  generalize R. O. Wells' main results in \cite{W}. In Section 4,  examples of M-V systems and morphisms are given. In Section 5, we show some applications of Theorem \ref{1.1}, for instance, Poincar\'{e} dulaity theorem, K\"{u}nneth formula, Leray-Hirsch theorems on cohomology with values in local systems and Dolbeault cohomology with values in locally free sheaves. In Section 6, we verify Theorem \ref{1.2} and put forward several questions on these blow-up formulas.

%==================================
\subsection*{Acknowledgements}
I would like to express my gratitude to School of Mathematics and Statistics, Wuhan University for the warm hospitality during my visit. I sincerely thank  Prof. Jiangwei Xue for pointing out to me the meaningless notions in a  previous version of the present article. I would like to thank Prof. Sheng Rao and Dr. Xiang-Dong Yang for  sending me their articles  \cite{RYY2}, \cite{YY} and their helpful discussions.

%==================================
\section{Mayer-Vietoris systems}
In this section, $R$ denotes a commutative ring with unit. If presheaves, precosheaves and morphisms are mentioned, they will be assumed to be  them of $R$-modules implicitly.
\subsection{Definitions}
Let $X$ be a topological space and $\mathcal{M}$ a presheaf (resp. precosheaf) on $X$. For open sets $V\subseteq U$, denote by $\rho_{U,V}$ (resp. $i_{V,U}$) the restriction $\mathcal{M}(U)\rightarrow\mathcal{M}(V)$ (resp. the extension $\mathcal{M}(V)\rightarrow\mathcal{M}(U)$). We call $\mathcal{M}$ a \emph{cdp presheaf} (resp. \emph{cds precosheaf}), if it satisfies the \emph{countable direct product} (resp. \emph{sum}) \emph{condition}, i.e., for any collection $\{U_n|n\in\mathbb{N}^+\}$ of disjoint open subsets of $X$,
\begin{displaymath}
(\rho_{\bigcup_{n=1}^\infty U_n,U_n})_{n\in\mathbb{Z}}:\mathcal{M}(\bigcup_{n=1}^\infty U_n)\rightarrow\prod_{n=1}^\infty\mathcal{M}(U_n)
\end{displaymath}
(resp.
\begin{displaymath}
\sum_{n=1}^\infty i_{U_n,\bigcup_{n=1}^\infty U_n}:\bigoplus_{n=1}^\infty\mathcal{M}(U_n)\rightarrow\mathcal{M}(\bigcup_{n=1}^\infty U_n))
\end{displaymath}
is an isomorphism.  Clearly, any sheaf (resp. cosheaf) is a cdp presheaf (resp. cds precosheaf).

A collection  $\mathcal{M}^*=\{(\mathcal{M}^p, \mbox{ }\delta^p)|p\in \mathbb{Z}\}$ is called \emph{a M-V system of presheaves} (resp. \emph{precosheaves}) on $X$, if

$(i)$ For any $p\in\mathbb{Z}$, $\mathcal{M}^p$ is a presheaf (resp. precosheaf) on $X$;

$(ii)$ For  any open subsets $U$ and $V$,  $\delta_{U,V}^p:\mathcal{M}^p(U\cap V)\rightarrow\mathcal{M}^{p+1}(U\cup V)$ (resp. $\delta_{U,V}^p:\mathcal{M}^{p}(U\cup V)\rightarrow\mathcal{M}^{p+1}(U\cap V)$) are morphisms for all $p\in\mathbb{Z}$, and satisfy that
\begin{displaymath}
\tiny{
\xymatrix{
\cdots\mathcal{M}^{p-1}(U\cap V)\ar[r]^{\quad\delta_{U,V}^{p-1}}& \mathcal{M}^p(U\cup V)\ar[r]^{P_{U,V}^p\quad}&\mathcal{M}^p(U)\oplus \mathcal{M}^p(V)\ar[r]^{\quad Q_{U,V}^p}&\mathcal{M}^{p}(U\cap V)\ar[r]^{\delta_{U,V}^{p}\quad}&\mathcal{M}^{p+1}(U\cup V)\cdots}}
\end{displaymath}
(resp.
\begin{displaymath}
\tiny{
\xymatrix{
\cdots\mathcal{M}^{p-1}(U\cup V)\ar[r]^{\quad\delta_{U,V}^{p-1}}& \mathcal{M}^p(U\cap V)\ar[r]^{P_{U,V}^p\quad}&\mathcal{M}^p(U)\oplus \mathcal{M}^p(V)\ar[r]^{\quad Q_{U,V}^p}&\mathcal{M}^{p}(U\cup V)\ar[r]^{\delta_{U,V}^{p}\quad}&\mathcal{M}^{p+1}(U\cap V)\cdots}})
\end{displaymath}
is an exact sequence, where
\begin{displaymath}
P_{U,V}^p(\alpha)=(\rho^p_{U\cup V,U}(\alpha),\mbox{ }\rho^p_{U\cup V,V}(\alpha))\mbox{ and }Q_{U,V}^p(\beta,\gamma)=\rho^p_{U,U\cap V}(\beta)-\rho^p_{V,U\cap V}(\gamma)
\end{displaymath}
(resp.
\begin{displaymath}
P_{U,V}^p(\alpha)=(i^p_{U\cap V,U}(\alpha),\mbox{ }i^p_{U\cap V,V}(\alpha))\mbox{ and }Q_{U,V}^p(\beta,\gamma)=i^p_{U,U\cup V}(\beta)-i^p_{V,U\cup V}(\gamma)).
\end{displaymath}
If $\mathcal{M}^p$ are sheaves (resp. cosheaves) for all $p$, then $\mathcal{M}^*=\{(\mathcal{M}^p, \mbox{ }\delta^p)|p\in \mathbb{Z}\}$ is a M-V system of presheaves (resp. precosheaves) on $X$, if and only if, $\delta_{U,V}^p=0$ and $Q_{U,V}^p$ (resp. $P_{U,V}^p$) are surjective (resp. injective) for all $p\in\mathbb{Z}$ and open sets $U$, $V$.

Assume that $\mathcal{M}^*$ and $\mathcal{N}^*$ are M-V systems of presheaves (resp. precosheaves) on $X$. We say $F^*:\mathcal{M}^*\rightarrow \mathcal{N}^*$ is \emph{a morphism of M-V systems}, or briefly, \emph{a M-V morphism}, if $F^*=\{F^p:\mathcal{M}^p\rightarrow \mathcal{N}^p|p\in \mathbb{Z}\}$ is a collection of morphisms of presheaves (resp. precosheaves) satisfying that the diagram
\begin{displaymath}
\tiny{
\xymatrix{
  \cdots\mathcal{M}^{p-1}(U\cap V)\ar[d]^{F^{p-1}(U\cap V)}\ar[r]^{\quad\delta_{U,V}^{p-1}}& \mathcal{M}^p(U\cup V)\ar[d]^{F^{p}(U\cup V)}\ar[r]^{P_{U,V}^p\quad}&\mathcal{M}^p(U)\oplus \mathcal{M}^p(V)\ar[d]^{(F^p(U),F^p(V))}\ar[r]^{\quad Q_{U,V}^p}&\mathcal{M}^{p}(U\cap V)\ar[d]^{F^{p}(U\cap V)}\ar[r]^{\delta_{U,V}^{p}\quad}&\mathcal{M}^{p+1}(U\cup V)\ar[d]^{F^{p+1}(U\cup V)}\cdots\\
\cdots\mathcal{N}^{p-1}(U\cap V)\ar[r]^{\quad\delta_{U,V}^{p-1}}& \mathcal{N}^p(U\cup V)\ar[r]^{P_{U,V}^p\quad}&\mathcal{N}^p(U)\oplus \mathcal{N}^p(V)\ar[r]^{\quad Q_{U,V}^p}&\mathcal{N}^{p}(U\cap V)\ar[r]^{\delta_{U,V}^{p}\quad}&\mathcal{N}^{p+1}(U\cup V)\cdots }}
\end{displaymath}
(resp.
\begin{displaymath}
\tiny{
\xymatrix{
  \cdots\mathcal{M}^{p-1}(U\cup V)\ar[d]^{F^{p-1}(U\cup V)}\ar[r]^{\quad\delta_{U,V}^{p-1}}& \mathcal{M}^p(U\cap V)\ar[d]^{F^{p}(U\cap V)}\ar[r]^{P_{U,V}^p\quad}&\mathcal{M}^p(U)\oplus \mathcal{M}^p(V)\ar[d]^{(F^p(U),F^p(V))}\ar[r]^{\quad Q_{U,V}^p}&\mathcal{M}^{p}(U\cup V)\ar[d]^{F^{p}(U\cup V)}\ar[r]^{\delta_{U,V}^{p}\quad}&\mathcal{M}^{p+1}(U\cap V)\ar[d]^{F^{p+1}(U\cap V)}\cdots\\
 \cdots\mathcal{N}^{p-1}(U\cup V)\ar[r]^{\quad\delta_{U,V}^{p-1}}& \mathcal{N}^p(U\cap V)\ar[r]^{P_{U,V}^p\quad}&\mathcal{N}^p(U)\oplus \mathcal{N}^p(V)\ar[r]^{\quad Q_{U,V}^p}&\mathcal{N}^{p}(U\cup V)\ar[r]^{\delta_{U,V}^{p}\quad}&\mathcal{N}^{p+1}(U\cap V)\cdots)}}
\end{displaymath}
is commutative.

For  $n\in\mathbb{Z}$ and a M-V system $\mathcal{M}^*=\{(\mathcal{M}^p,\mbox{ } \delta^p)|p\in \mathbb{Z}\}$,  $\mathcal{M}^*[n]=\{(\mathcal{M}^p[n], \mbox{ }\delta^p[n])|p\in \mathbb{Z}\}$ is also a M-V system, where $\mathcal{M}^p[n]=\mathcal{M}^{p+n}$ and $\delta^p[n]=\delta^{p+n}$.

Suppose that $F^*:\mathcal{M}^*\rightarrow \mathcal{N}^*$ is a M-V morphism  on $X$.

$(1)$ For an open set $U\subseteq X$, $F^*(U)$ is called  \emph{an isomorphism}, if $F^p(U):\mathcal{M}^p(U)\rightarrow \mathcal{N}^p(U)$ are isomorphisms for all $p$.

$(2)$ $F^*$ is called \emph{an isomorphism}, if $F^*(U)$ is  an isomorphism for any open set $U\subseteq X$.

We easily get the following proposition.
\begin{prop}\label{elem}
Suppose that $f:X\rightarrow Y$ is a continuous map of topological spaces.

$(1)$ For a  cdp presheaf \emph{(}resp. cds precosheaf\emph{)} $\mathcal{M}$  on $X$, the direct image $f_*\mathcal{M}$ is a cdp presheaf \emph{(}resp. cds precosheaf\emph{)} on $Y$.

$(2)$ For a M-V system $\mathcal{M}^*=\{(\mathcal{M}^p,\mbox{ } \delta^p)|p\in \mathbb{Z}\}$ of presheaves \emph{(}resp. precosheaves\emph{)} on $X$,
the direct image $f_*\mathcal{M}^*:=\{(f_*\mathcal{M}^p,\mbox{ } f_*\delta^p)|p\in \mathbb{Z}\}$ is a M-V system of presheaves \emph{(}resp. precosheaves\emph{)} on $Y$.

$(3)$ For a M-V morphism $F^*:\mathcal{M}^*\rightarrow \mathcal{N}^*$ on $X$,  $f_*F^*:f_*\mathcal{M}^*\rightarrow f_*\mathcal{N}^*$ is a M-V morphism on $Y$, where $f_*F^*=\{f_*F^p:f_*\mathcal{M}^p\rightarrow f_*\mathcal{N}^p|p\in\mathbb{Z}\}$.
\end{prop}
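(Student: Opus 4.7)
The plan is to reduce each of the three claims to the corresponding property already known on $X$, using the set-theoretic identities $f^{-1}(U\cup V)=f^{-1}(U)\cup f^{-1}(V)$, $f^{-1}(U\cap V)=f^{-1}(U)\cap f^{-1}(V)$, together with the preservation of disjointness: if $\{U_n\}_{n\in\mathbb{N}^+}$ is a pairwise disjoint family of open sets in $Y$, then $\{f^{-1}(U_n)\}_{n\in\mathbb{N}^+}$ is a pairwise disjoint family of open sets in $X$ with $f^{-1}(\bigcup_n U_n)=\bigcup_n f^{-1}(U_n)$. Combined with the defining formula $(f_*\mathcal{M})(U)=\mathcal{M}(f^{-1}(U))$ and the fact that $f_*$ takes the restriction (resp. extension) maps of $\mathcal{M}$ to those of $f_*\mathcal{M}$ verbatim, these identifications will translate each condition on $Y$ into the matching condition on $X$.

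For part (1), I fix a countable pairwise disjoint family $\{U_n\}$ of opens in $Y$ and observe that the product (resp. sum) morphism for $f_*\mathcal{M}$ on this family is, after the identifications above, literally the product (resp. sum) morphism for $\mathcal{M}$ on the family $\{f^{-1}(U_n)\}$, which is an isomorphism by the cdp (resp. cds) property of $\mathcal{M}$. For part (2), for open $U,V\subseteq Y$ I take $\delta^p_{U,V}$ on $f_*\mathcal{M}^*$ to be $\delta^p_{f^{-1}(U),f^{-1}(V)}$ of $\mathcal{M}^*$, transported through $(f_*\mathcal{M}^p)(U\cup V)=\mathcal{M}^p(f^{-1}(U)\cup f^{-1}(V))$ and $(f_*\mathcal{M}^p)(U\cap V)=\mathcal{M}^p(f^{-1}(U)\cap f^{-1}(V))$. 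Under these identifications the operators $P^p_{U,V}$ and $Q^p_{U,V}$ on $Y$ agree with $P^p_{f^{-1}(U),f^{-1}(V)}$ and $Q^p_{f^{-1}(U),f^{-1}(V)}$ on $X$, so the Mayer-Vietoris exact sequence for $\mathcal{M}^*$ at $(f^{-1}(U),f^{-1}(V))$ is, term by term, the desired exact sequence for $f_*\mathcal{M}^*$ at $(U,V)$. Part (3) follows in the same way: the commutativity of the ladder required of $f_*F^*$ on $(U,V)$ is nothing other than the ladder that $F^*$ already satisfies on $(f^{-1}(U), f^{-1}(V))$.

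The whole argument is thus formal diagram-and-definition chasing, with no real obstacle. The only point at which one must be slightly careful is the precosheaf case, where one must check that $f_*$ carries the extension morphisms $i_{V,U}$ of $\mathcal{M}$ to the extension morphisms of $f_*\mathcal{M}$, so that the comparisons between $P^p$, $Q^p$ on $Y$ and on $X$ really are equalities, not merely compatible maps. Once this is spelled out, parts (1), (2), (3) follow immediately.
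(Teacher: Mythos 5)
Your argument is correct and is exactly the routine verification the paper has in mind: the paper states this proposition without proof ("We easily get the following proposition"), and the intended justification is precisely the translation via $f^{-1}(U\cup V)=f^{-1}(U)\cup f^{-1}(V)$, $f^{-1}(U\cap V)=f^{-1}(U)\cap f^{-1}(V)$, and the preservation of disjoint unions under preimage, together with $(f_*\mathcal{M})(U)=\mathcal{M}(f^{-1}(U))$. Your added care about the extension morphisms in the precosheaf case is appropriate but does not change the (purely formal) nature of the proof.
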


\subsection{Elementary Properties}
The following properties can be easily checked using the elementary homological algebra in the category of modules. Suppose that $X$ is a topological space.

For cdp presheaves and cds precosheaves on $X$, we have

$(1)$ $(i)$ For a cds precosheaf $\mathcal{M}$  and  a $R$-module $N$, then $\mathcal{M}\otimes_RN$ is a  cds precosheaf.

$(ii)$ Assume that $R$ is a Noether ring. For a cdp presheaf $\mathcal{M}$ and a finite generalized flat $R$-module $N$, $\mathcal{M}\otimes_RN$ is a cdp presheaf.

$(iii)$ For  a cdp presheaf (resp.  cds precosheaf)  $\mathcal{M}$ and a $R$-module $N$ , $\mathcal{H}om_{\underline{R}_X}(N,\mathcal{M})$ (resp. $\mathcal{H}om_{\underline{R}_X}(\mathcal{M},N)$) is a cdp presheaf, where $N$ is viewed as a constant presheaf (resp. precosheaf).

$(iv)$ For a cds precosheaf $\mathcal{M}$ and a finite generated $R$-module $N$, $\mathcal{H}om_{\underline{R}_X}(N,\mathcal{M})$ is a cds precosheaf.
%Lin Zibing's book p. 113, 8.

$(2)$ If $\mathcal{M}_{\alpha}$ are  cdp presheaves (resp. cds precosheaves) for all $\alpha\in \Lambda$, then $\prod_{\alpha\in\Lambda}\mathcal{M}_\alpha$ (resp. $\bigoplus_{\alpha\in\Lambda}\mathcal{M}_\alpha$) is a cdp presheaf (resp. cds precosheaf).

$(3)$ For a  morphism $F:\mathcal{M}\rightarrow \mathcal{N}$  of cdp  presheaves (resp. cds precosheaves), $\textrm{ker}F$, $\textrm{Im}F$, $\textrm{coker}F$ are cdp presheaves (resp. cds precosheaves).

For M-V systems on $X$, we have

$(1')$ Suppose that $\mathcal{M}^*:=\{(\mathcal{M}^p, \mbox{ }\delta^p)|p\in \mathbb{Z}\}$ is a M-V system of presheaves (resp. precosheaves).

$(i)$ For a flat $R$-module $N$,
$\mathcal{M}^*\otimes_RN=\{(\mathcal{M}^p\otimes_RN,\mbox{ } \delta^p\otimes\textrm{id}_N)|p\in \mathbb{Z}\}$ is   a M-V system of presheaves (resp. precosheaves).

$(ii)$ For a projective $R$-module $N$, $\mathcal{H}om_{\underline{R}_X}(N,\mathcal{M}^*)$ is a M-V system of  presheaves (resp. precosheaves).

$(iii)$ For an injective $R$-module  $N$, $\mathcal{H}om_{\underline{R}_X}(\mathcal{M}^*,N)$ is a M-V system of  precosheaves (resp. presheaves). In particular, if $R$ is a divisible ring, $\mathcal{H}om_{\underline{R}_X}(\mathcal{M}^*,N)$ is a M-V system for any $R$-module $N$.

$(2')$ If $\mathcal{M}_\alpha^*=\{(\mathcal{M}_\alpha^p, \mbox{ }\delta_\alpha^p)|p\in \mathbb{Z}\}$ are M-V systems of presheaves (resp. precosheaves) for all $\alpha\in \Lambda$, then
\begin{displaymath}
\bigoplus_{\alpha\in\Lambda}\mathcal{M}_\alpha^*:=\{(\bigoplus_{\alpha\in\Lambda} \mathcal{M}_\alpha^p,\mbox{ } \bigoplus_{\alpha\in\Lambda}\delta_\alpha^p)|p\in \mathbb{Z}\}
\end{displaymath}
and
\begin{displaymath}
\prod_{\alpha\in\Lambda}\mathcal{M}_\alpha^*:=\{(\prod_{\alpha\in\Lambda} \mathcal{M}_\alpha^p, \mbox{ }\prod_{\alpha\in\Lambda}\delta_\alpha^p)|p\in \mathbb{Z}\}
\end{displaymath}
are both M-V systems of presheaves  (resp. precosheaves).

$(3')$ Let $F^*:\mathcal{L}^*\rightarrow \mathcal{M}^*$ and $G^*:\mathcal{M}^*\rightarrow \mathcal{N}^*$ be morphism of systems. Suppose that
\begin{displaymath}
\xymatrix{
 0\ar[r] & \mathcal{L}^*    \ar[r]^{F^*} & \mathcal{M}^*\ar[r]^{G^*}&\mathcal{N}^*\ar[r] &0 }
\end{displaymath}
is an exact sequence of systems of presheaves (resp. precosheaves), i.e., for every $p\in\mathbb{Z}$, $0\rightarrow L^p\rightarrow M^p\rightarrow N^p\rightarrow0$ is exact. Any two among $\mathcal{L}^*$, $\mathcal{M}^*$, $\mathcal{N}^*$ are M-V systems implies that the other is  also a M-V system.

$(4')$ If $F_\alpha:\mathcal{N}^*\rightarrow \mathcal{M}_\alpha^*$ (resp. $G_\alpha:\mathcal{M}_\alpha^*\rightarrow \mathcal{N}^*$) are morphisms of M-V systems of  presheaves (resp. precosheaves) for all $\alpha\in \Lambda$, then
\begin{displaymath}
(F^*_\alpha)_{\alpha\in\Lambda}:\mathcal{N}^*\rightarrow\prod_{\alpha\in\Lambda}\mathcal{M}^*_\alpha
\end{displaymath}
(resp.
\begin{displaymath}
\sum_{\alpha\in\Lambda}G^*_\alpha:\bigoplus_{\alpha\in\Lambda}\mathcal{M}^*_\alpha\rightarrow\mathcal{N}^*)
\end{displaymath}
is a  morphism of M-V systems of   presheaves (resp. precosheaves).

Assume that $\Lambda$ is a finite set. Then $(F^*_\alpha)_{\alpha\in\Lambda}$ and $\sum_{\alpha\in\Lambda}G^*_\alpha$ are both  morphisms of M-V systems of  presheaves (resp.  precosheaves).

$(5')$ If $F^*:\mathcal{M}^*\rightarrow \mathcal{N}^*$ and $G^*:\mathcal{L}^*\rightarrow \mathcal{M}^*$ are both  morphisms of M-V morphisms of presheaves (resp. precosheaves), then the composition $F^*\circ G^*$ is a M-V morphism.

$(6')$ If $F^*_i:\mathcal{M}^*\rightarrow \mathcal{N}^*$ are morphisms of  M-V systems of presheaves (resp.  precosheaves) for all  $1\leq i\leq n$, then $\sum_{i=1}^nF^*_i:\mathcal{M}^*\rightarrow \mathcal{N}^*$ is a M-V morphism, where $(\sum_{i=1}^nF^p_i)(\alpha)=\sum_{i=1}^nF^p_i(\alpha)$ for any $p\in\mathbb{Z}$, open set $U$ and $\alpha\in \mathcal{M}^p(U)$.

$(7')$ If $F_\alpha^*:\mathcal{M}_\alpha^*\rightarrow\mathcal{N}_\alpha^*$ are  morphisms of M-V systems of presheaves  (resp. precosheaves) for all $\alpha\in \Lambda$, then
\begin{displaymath}
\bigoplus_{\alpha\in\Lambda}F_\alpha^*:\bigoplus_{\alpha\in\Lambda}\mathcal{M}_\alpha^*\rightarrow\bigoplus_{\alpha\in\Lambda}\mathcal{N}_\alpha^*
\end{displaymath}
and
\begin{displaymath}
\prod_{\alpha\in\Lambda}F_\alpha^*:\prod_{\alpha\in\Lambda}\mathcal{M}_\alpha^*\rightarrow\prod_{\alpha\in\Lambda}\mathcal{N}_\alpha^*
\end{displaymath}
are both M-V morphisms.

\subsection{A proof of Theorem \ref{1.1}}
First, recall the \emph{glued principle}, which was proved in our previous articles.  For readers' convenience, we give a complete proof here.
\begin{lem}[\cite{M2}]\label{glued}
Denote by $\mathcal{P}(X)$ a statement on a smooth manifold $X$. Assume that $\mathcal{P}$ satisfies  conditions:

$(i)$ \emph{(\emph{local condition})} There exists a basis $\mathfrak{U}$ of topology of $X$, such that, $\mathcal{P}(U_1\cap...\cap U_l)$ holds for any finite $U_1,..., U_l\in \mathfrak{U}$.

$(ii)$ \emph{(disjoint condition)} Let $\{U_n|n\in\mathbb{N}^+\}$ be any collection of disjoint open subsets of $X$. If $\mathcal{P}(U_n)$ hold for all $n\in\mathbb{N}^+$, $\mathcal{P}(\bigcup_{n=1}^\infty U_n)$ holds.

$(iii)$ \emph{(Mayer-Vietoris condition)} For open subsets $U$, $V$ of $X$, if $\mathcal{P}(U)$, $\mathcal{P}(V)$ and $\mathcal{P}(U\cap V)$ hold, then $\mathcal{P}(U\cup V)$ holds.\\
Then $\mathcal{P}(X)$ holds.
\end{lem}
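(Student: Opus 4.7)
The plan is to carry out the standard Bott--Tu style inductive argument: first bootstrap from the basis $\mathfrak{U}$ to all finite unions of ``nice'' opens, then use a proper exhaustion to split $X$ into an even/odd pair of countable disjoint unions $U, V$ on which the disjoint condition applies, whose intersection is again controllable by (ii), and finally glue by (iii).

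First I would enlarge $\mathfrak{U}$ to the family $\mathfrak{B}$ of all finite intersections $U_1\cap\cdots\cap U_l$ with $U_i\in\mathfrak{U}$. Hypothesis (i) says $\mathcal{P}(B)$ holds for every $B\in\mathfrak{B}$, and crucially $\mathfrak{B}$ is closed under pairwise intersection. I would then show by induction on $l$, using (iii), that $\mathcal{P}(B_1\cup\cdots\cup B_l)$ holds for any $B_1,\dots,B_l\in\mathfrak{B}$: the inductive step applies (iii) to the pair $(B_1\cup\cdots\cup B_{l-1},\,B_l)$, noting that the intersection $\bigcup_{j<l}(B_j\cap B_l)$ is again a union of $l-1$ elements of $\mathfrak{B}$, to which the inductive hypothesis applies.

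Next I would invoke the existence of a smooth proper function $f:X\to[0,\infty)$ (available because any smooth manifold is second countable and paracompact). Set $K_n=f^{-1}([n,n+1])$ and $O_n=f^{-1}((n-\tfrac{1}{2},n+\tfrac{3}{2}))$. For each $n\geq 0$, cover the compact set $K_n$ by finitely many elements of $\mathfrak{U}$ all contained in $O_n$, and let $W_n$ be their union; by the previous step, $\mathcal{P}(W_n)$ holds. Because $O_n\cap O_m=\emptyset$ whenever $|n-m|\geq 2$, the subfamilies $\{W_n\}_{n\text{ even}}$ and $\{W_n\}_{n\text{ odd}}$ are each pairwise disjoint. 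Setting $U=\bigsqcup_{n\text{ even}}W_n$ and $V=\bigsqcup_{n\text{ odd}}W_n$, hypothesis (ii) gives $\mathcal{P}(U)$ and $\mathcal{P}(V)$.

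Finally, $U\cap V=\bigsqcup_{n\geq 0}(W_n\cap W_{n+1})$, the union being disjoint because $W_n\cap W_{n+2}=\emptyset$. Each slice $W_n\cap W_{n+1}$ is a finite union of pairwise intersections of elements of $\mathfrak{U}$, hence a finite union of elements of $\mathfrak{B}$, so Step 1 yields $\mathcal{P}(W_n\cap W_{n+1})$, and (ii) then gives $\mathcal{P}(U\cap V)$. Applying (iii) to the pair $(U,V)$ gives $\mathcal{P}(U\cup V)=\mathcal{P}(X)$. The only delicate point is the bookkeeping in the exhaustion step: the choice $W_n\subseteq O_n$ is what simultaneously guarantees disjointness of the two subfamilies and the vanishing $W_n\cap W_{n+2}=\emptyset$ that makes $U\cap V$ a disjoint union of pieces already handled by Step 1; without it, (ii) could not be applied to $U\cap V$.
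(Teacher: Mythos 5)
Your proof is correct, and while the first half coincides with the paper's argument, the global step takes a genuinely different route. The paper also begins by showing (its claim $(c.1)$/$(c.2)$) that $\mathcal{P}$ holds on finite unions of finite intersections of basis elements -- your induction on $l$ using the intersection-closedness of $\mathfrak{B}$ is a slightly cleaner packaging of the same idea. But for the passage to all of $X$, the paper cites \cite{GHV}, p.~16, Prop.~II to decompose $X$ as $V_1\cup\dots\cup V_l$ with each $V_i$ (and each intersection $V_{i_1}\cap\dots\cap V_{i_k}$) a countable disjoint union of sets already handled, and then reapplies $(c.1)$ with $r=l$; you instead build the decomposition by hand from a proper smooth function $f:X\to[0,\infty)$, producing just two pieces $U,V$, each a countable disjoint union of ``collars'' $W_n$, with $U\cap V$ again a disjoint union of finite unions of elements of $\mathfrak{B}$ -- the classical Bott--Tu shelling. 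Your version is self-contained (no appeal to \cite{GHV}) and needs only one final application of the Mayer--Vietoris condition, at the cost of the bookkeeping with $O_n$ and of assuming $\sigma$-compactness to get the proper function; the latter is harmless here since the paper's standing hypotheses (connected, second countable/paracompact manifolds, as already required by the \cite{GHV} citation) guarantee it. The only loose ends are trivial: if $X$ is compact (or some initial $K_n$ are empty) your countable families of $W_n$'s degenerate to finite ones padded by empty sets, so one should either note that $\mathcal{P}(\emptyset)$ holds (e.g.\ as $U_1\cap U_2$ for disjoint $U_1,U_2\in\mathfrak{U}$) or observe that the compact case is already settled by Step~1 alone.
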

\begin{proof}
We prove the follows:

$(c.1)$ For open subsets  $U_1$, $\ldots$, $U_r$ of $X$, if $\mathcal{P}(U_{i_1}\cap\ldots \cap U_{i_k})$ holds for any $1\leq i_1<\ldots<i_k\leq r$, then $\mathcal{P}(\bigcup_{i=1}^r U_i)$ holds.

Obviously, it holds for $r=1$. Suppose $(c.1)$ holds for $r$. For $r+1$, set
$U'_1=U_1$, $\ldots$, $U'_{r-1}=U_{r-1}$, $U'_r=U_r\cup U_{r+1}$. Then $\mathcal{P}(U'_{i_1}\cap\ldots \cap U'_{i_k})$ holds for any $1\leq i_1<\ldots<i_k\leq r-1$. By the Mayer-Vietoris condition, $\mathcal{P}(U'_{i_1}\cap\ldots \cap U'_{i_{k-1}}\cap U'_r)$ also holds for any $1\leq i_1<\ldots<i_{k-1}\leq r-1$, since $\mathcal{P}(U_{i_1}\cap\ldots \cap U_{i_{k-1}}\cap U_r)$, $\mathcal{P}(U_{i_1}\cap\ldots \cap U_{i_{k-1}}\cap U_{r+1})$ and  $\mathcal{P}(U_{i_1}\cap\ldots \cap U_{i_{k-1}}\cap U_r\cap U_{r+1})$  hold. By the inductive hypothesis, $\mathcal{P}(\bigcup_{i=1}^{r+1} U_i)=\mathcal{P}(\bigcup_{i=1}^{r} U'_i)$ holds. We proved $(c.1)$.

Let $\mathfrak{U}_\mathfrak{f}$ be the collection of open sets  which are finite unions of open sets in  $\mathfrak{U}$. We claim that

$(c.2)$ $\mathcal{P}(V)$ holds for any finite intersection $V$ of open sets in $\mathfrak{U}_\mathfrak{f}$.

Set $V=\bigcap_{i=1}^s U_i$, where $U_i=\bigcup_{j=1}^{r_i}U_{ij}$ and $U_{ij}\in\mathfrak{U}$. Then $V=\bigcup_{J\in\Lambda}U_J$, where $\Lambda=\{J=(j_1,...,j_s)|1\leq j_1\leq r_1,\ldots,1\leq j_s\leq r_s\}$ and $U_J=U_{1j_1}\cap...\cap U_{sj_s}$. For any $J_1,\ldots,J_t\in\Lambda$, $\mathcal{P}(U_{J_1}\cap \ldots\cap U_{J_t})$ holds by the local condition. Hence $\mathcal{P}(V)=\mathcal{P}(\bigcup_{J\in\Lambda}U_J)$ holds by $(c.1)$.

By \cite{GHV}, p. 16, Prop. II, $X=V_1\cup...\cup V_l$, where $V_i$ is a countable disjoint union of open sets in  $\mathfrak{U}_\mathfrak{f}$. Obviously, for any $1\leq i_1<\ldots<i_k\leq l$, $V_{i_1}\cap \ldots \cap V_{i_k}$ is a countable  disjoint union of the finite intersection of open sets in $\mathfrak{U}_\mathfrak{f}$. By the disjoint condition and $(c.2)$,  $\mathcal{P}(V_{i_1}\cap \ldots \cap V_{i_k})$ holds. Then $\mathcal{P}(X)$ holds by $(c.1)$.
\end{proof}

Now, we give a proof of Theorem \ref{1.1}.
%\begin{lemma}
%Let $X$ be a connected smooth manifold and $\mathcal{M}^*$, $\mathcal{N}^*$  M-V systems of cdp presheaves \emph{(resp.} cds precosheaves\emph{)} on $X$. Assume $F^*:\mathcal{M}^*\rightarrow \mathcal{N}^*$ is a cdp \emph{(resp.} cds\emph{)} M-V morphism  satisfying the following hypothesis:

%\emph{(*)} There exists a basis $\mathfrak{U}$ of topology of $X$, such that, $F^*(U_1\cap...\cap U_l)$ is an isomorphism  for any finite $U_1,..., U_l\in \mathfrak{U}$.\\
%Then $F^*(X)$ is an isomorphism.
%\end{lemma}
\begin{proof}
We only prove the case of presheaves. The other case can be proved similarly.

Denote by $\mathcal{P}(U)$ the statement that $F^*(U)$ is an isomorphism.
By the hypothesis (*), $\mathcal{P}(\bigcap_{i=1}^l U_i)$ hold for any finite open sets $U_1$, $\ldots$, $U_l\in \mathfrak{U}$, i.e., $\mathcal{P}$ satisfies the local condition in Lemma \ref{glued}. For any $p\in\mathbb{Z}$, $F^p$ is a morphism of cdp presheaves, so $\mathcal{P}$  satisfies the disjoint condition in Lemma \ref{glued}. Since $F^*$ is a M-V morphism,  we have a commutative diagram of long exact sequences
\begin{displaymath}
\tiny{
\xymatrix{
  \cdots\mathcal{M}^{p-1}(U\cap V)\ar[d]^{F^{p-1}(U\cap V)}\ar[r]^{\quad\delta_{U,V}^{p-1}}& \mathcal{M}^p(U\cup V)\ar[d]^{F^{p}(U\cup V)}\ar[r]^{P^p\quad}&\mathcal{M}^p(U)\oplus \mathcal{M}^p(V)\ar[d]^{(F^p(U),F^p(V))}\ar[r]^{\quad Q^p}&\mathcal{M}^{p}(U\cap V)\ar[d]^{F^{p}(U\cap V)}\ar[r]^{\delta_{U,V}^{p}\quad}&\mathcal{M}^{p+1}(U\cup V)\ar[d]^{F^{p+1}(U\cup V)}\cdots\\
 \cdots\mathcal{N}^{p-1}(U\cap V)\ar[r]^{\quad\delta_{U,V}^{p-1}}& \mathcal{N}^p(U\cup V)\ar[r]^{P^p\quad}&\mathcal{N}^p(U)\oplus \mathcal{N}^p(V)\ar[r]^{\quad Q^p}&\mathcal{N}^{p}(U\cap V)\ar[r]^{\delta_{U,V}^{p}\quad}&\mathcal{N}^{p+1}(U\cup V)\cdots, }}
\end{displaymath}
for open sets $V\subseteq U$. If $F^*(U)$, $F^*(V)$ and $F^*(U\cap V)$ are isomorphisms, then $F^*(U\cup V)$ is an isomorphism by the five-lemma.  Therefore, $\mathcal{P}$  satisfies the Mayer-Vietoris condition. By Lemma \ref{glued}, $F^*(X)$ is an isomorphism.

For any open set $V$ in $X$, set $\mathfrak{U}_V=\{U\in\mathfrak{U}|U\subseteq V\}$, which is a basis of topology of $V$. By the hypothesis (*),  $F^*(U_1\cap...\cap U_l)$ is isomorphic  for any finite $U_1$, $\ldots$, $U_l\in \mathfrak{U}_V\subseteq \mathfrak{U}$. So $F^*(V)$ is an isomorphism as above. We complete the proof.
\end{proof}

\section{Several Operators}
Before giving examples of Mayer-Vietoris systems, we define several operators on forms and currents with values in local systems and locally free sheaves, which may be well known for experts. We do not find appropriate references on them, and hence, give all details here by the sheaf-theoretic approach completely.  Part of them was defined in a different viewpoint in \cite{Se,W}, where they used the language of bundle-valued forms and their topological duals (called bundle-valued currents).
\subsection{Complex manifolds and locally free sheaves}
Let $X$ be a connected complex manifold and $\mathcal{E}$ a locally free sheaf of $\mathcal{O}_X$-modules of rank $m$ on $X$. An open subset $U$ of $X$ is said to be \emph{$\mathcal{E}$-free}, if the restriction $\mathcal{E}|_U$ is a free sheaf of $\mathcal{O}_U$-modules. An open covering $\mathfrak{U}$ of $X$ is said to be \emph{$\mathcal{E}$-free}, if all $U\in\mathfrak{U}$ are $\mathcal{E}$-free. For an open set $U\subseteq X$, the elements of $\Gamma(U, \mathcal{E}\otimes_{\mathcal{O}_X}\mathcal{A}_X^{p,q})$ and $\Gamma(U, \mathcal{E}\otimes_{\mathcal{O}_X}\mathcal{D}_X^{\prime p,q})$ are called \emph{$\mathcal{E}$-vlaued $(p,q)$-forms and currents on $U$}, respectively. From now on, all tensors $\otimes_{\mathcal{O}_X}$ of sheaves of $\mathcal{O}_X$-modules will be simply denoted by $\otimes$.
\subsubsection{\emph{\textbf{Local representations}}}
Let $U$ be a $\mathcal{E}$-free open subset of $X$ and  $e_1$, $\ldots$, $e_m$  a basis of $\Gamma(U,\mathcal{E})$ as an $\mathcal{O}_X(U)$-module.  For $\omega\in\Gamma(X, \mathcal{E}\otimes\mathcal{A}_X^{p,q})$, the restriction $\omega|_U$ to $U$ can be written as $\sum_{i=1}^{m}e_i\otimes \alpha_i$, where $\alpha_1$, $\ldots$, $\alpha_m\in \mathcal{A}^{p,q}(U)$. Similarly, for $S\in\Gamma(X, \mathcal{E}\otimes\mathcal{D}_X^{\prime p,q})$, $S|_U=\sum_{i=1}^{m}e_i\otimes T_i$, where $T_1$, $\ldots$, $T_m\in \mathcal{D}^{\prime p,q}(U)$. We easily get
\begin{lem}\label{support}
For any $1\leq i\leq m$, $\emph{supp}\alpha_i\subseteq\emph{supp}\omega\cap U$ and $\emph{supp}T_i\subseteq\emph{supp}S\cap U$.
\end{lem}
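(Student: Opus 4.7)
The statement is a standard local-freeness computation, and the plan is to verify both inclusions simultaneously using the defining property of support (complement of the largest open set on which the section vanishes) together with the fact that $e_1,\ldots,e_m$ restrict to a basis on every open subset of $U$.

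First I would observe that $\alpha_i\in\mathcal{A}^{p,q}(U)$ already forces $\mathrm{supp}\,\alpha_i\subseteq U$, so only the inclusion $\mathrm{supp}\,\alpha_i\subseteq\mathrm{supp}\,\omega$ needs genuine argument. Fix $x\in U\setminus\mathrm{supp}\,\omega$. By definition there exists an open neighborhood $V$ of $x$ in $X$ with $\omega|_V=0$; shrinking $V$ if necessary, I may assume $V\subseteq U$. Since $\mathcal{E}|_U$ is free with basis $e_1,\ldots,e_m$, the restrictions $e_1|_V,\ldots,e_m|_V$ form an $\mathcal{O}_X(V)$-basis of $\Gamma(V,\mathcal{E})$, and consequently, under the canonical isomorphism
\begin{displaymath}
\Gamma(V,\mathcal{E}\otimes\mathcal{A}_X^{p,q})\;\cong\;\mathcal{A}^{p,q}(V)^{\oplus m},
\end{displaymath}
the identity $\sum_{i=1}^m e_i\otimes \alpha_i|_V=\omega|_V=0$ translates into $\alpha_i|_V=0$ for every $i$. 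Thus $\alpha_i$ vanishes on the open neighborhood $V$ of $x$, so $x\notin\mathrm{supp}\,\alpha_i$. This proves $\mathrm{supp}\,\alpha_i\subseteq\mathrm{supp}\,\omega\cap U$.

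The argument for currents is essentially the same: replacing $\mathcal{A}_X^{p,q}$ by $\mathcal{D}_X^{\prime p,q}$, local freeness of $\mathcal{E}$ again yields $\Gamma(V,\mathcal{E}\otimes\mathcal{D}_X^{\prime p,q})\cong\mathcal{D}^{\prime p,q}(V)^{\oplus m}$, and the same uniqueness of coefficients forces $T_i|_V=0$ whenever $S|_V=0$. The only point that requires care, and which I would therefore single out as the sole nontrivial ingredient, is the identification of sections of $\mathcal{E}\otimes\mathcal{A}_X^{p,q}$ (resp.\ $\mathcal{E}\otimes\mathcal{D}_X^{\prime p,q}$) over $V$ with $m$-tuples of forms (resp.\ currents); this is immediate from $\mathcal{E}|_V\cong\mathcal{O}_V^{\oplus m}$ and the fact that tensor product commutes with finite direct sums, so no real obstacle arises.
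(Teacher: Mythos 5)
Your proof is correct and is exactly the standard argument the paper has in mind: the paper states this lemma with only the remark ``We easily get'' and omits the proof, and your verification via the identification $\Gamma(V,\mathcal{E}\otimes\mathcal{A}_X^{p,q})\cong\mathcal{A}^{p,q}(V)^{\oplus m}$ on a small neighborhood $V\subseteq U$ where $\omega$ (resp.\ $S$) vanishes, together with uniqueness of the coefficients, is precisely the intended reasoning.
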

%\begin{proof}
%Clearly, $\{x\in X\mid \omega_x\neq0\}\cap U=\bigcup_{i=1}^m \{x\in U\mid(\alpha_i)_x\neq0\}$, where $\omega_x$ and $(\alpha_i)_x$ are stalks of sections $\omega$ and $\alpha_i$ at $x$ respectively. So
%\begin{displaymath}
%\begin{aligned}
%\bigcup_{i=1}^m \textrm{supp}\alpha_i=&\bigcup_{i=1}^m \overline{\{x\in U\mid(\alpha_i)_x\neq0\}}^U\\
%=&\overline{\{x\in X\mid\omega_x\neq0\}\cap U}^U\\
%\subseteq&\overline{\{x\in X\mid\omega_x\neq0\}}^X\cap U=\textrm{supp} \omega\cap U,
%\end{aligned}
%\end{displaymath}
%where $\overline{A}^U$ and $\overline{A}^X$ denote closures of the subset $A\subseteq U$ in $U$ and $X$ respectively. We get the first part immediately, since $\textrm{supp}\alpha_i$ is closed in $U$. The second part can be proved similarly.
%\end{proof}
\subsubsection{\emph{\textbf{Extensions by zero and restrictions}}}
Assume that $j:V\rightarrow X$ is the inclusion of an open subset $V$ into $X$. Let $\mathfrak{U}$ be a $\mathcal{E}$-free covering of $X$ and $e^U_1$, $\ldots$, $e^U_m$  a basis of $\Gamma(U,\mathcal{E})$ as an $\mathcal{O}_X(U)$-module for any $U\in\mathfrak{U}$.

For any $\omega\in \Gamma_c(V,\mathcal{E}\otimes\mathcal{A}_X^{p,q})$ and $U\in\mathfrak{U}$, the restriction $\omega|_{V\cap U}$ to $U\cap V$ is $\sum_{i=1}^me_i^U|_{V\cap U}\otimes \alpha_i$, where $\alpha_1$, $\ldots$, $\alpha_m\in \mathcal{A}^{p,q}(V\cap U)$. Clearly, $\alpha_i=0$ on $(V\cap U)\cap (U-\textrm{supp}\alpha_i)=V\cap U-\textrm{supp}\alpha_i$. So $\alpha_i$ can be extended on $(V\cap U)\cup (U-\textrm{supp}\alpha_i)=U$ by zero, denoted by $\tilde{\alpha}_i$.  Set
\begin{displaymath}
\widetilde{\omega}_U=\sum_{i=1}^me^U_i\otimes \tilde{\alpha}_i
\end{displaymath}
in $\Gamma(U,\mathcal{E}\otimes\mathcal{A}_X^{p,q})$. Then $\{\widetilde{\omega}_U|U\in\mathfrak{U}\}$ can be glued as a global section of $\mathcal{E}\otimes\mathcal{A}_X^{p,q}$ on $X$, denoted by $j_*\omega$. It is noteworthy that $j_*\omega$ does not depend on the choice of the $\mathcal{E}$-free open covering $\mathfrak{U}$. Actually, let $\mathfrak{U}_0$ be the collection of all $\mathcal{E}$-free open sets in $X$. Any  $\mathcal{E}$-free covering $\mathfrak{U}$ is a subcovering of $\mathfrak{U}_0$, then $\{\tilde{\omega}_U|U\in\mathfrak{U}\}\subseteq \{\tilde{\omega}_U|U\in\mathfrak{U}_0\}$. So $j_*\omega$ defined by $\mathfrak{U}$ and $\mathfrak{U}_0$ coincides. Since $\textrm{supp}j_*\omega=\textrm{supp}\omega$ is compact, we get a map
\begin{equation}\label{extension}
j_*:\Gamma_c(V,\mathcal{E}\otimes\mathcal{A}_X^{p,q})\rightarrow\Gamma_c(X,\mathcal{E}\otimes\mathcal{A}_X^{p,q}),
\end{equation}
which is exactly \emph{the extension  by zero} of sections of the sheaf $\mathcal{E}\otimes\mathcal{A}_X^{p,q}$ with compact supports.

Let $j^*:\Gamma(X,\mathcal{E}\otimes\mathcal{D}_X^{\prime p,q})\rightarrow\Gamma(V,\mathcal{E}\otimes\mathcal{D}_X^{\prime p,q})$ be the \emph{restriction} of the sheaf $\mathcal{E}\otimes\mathcal{D}_X^{\prime p,q}$. For any $S\in \Gamma(X,\mathcal{E}\otimes\mathcal{D}_X^{\prime p,q})$ and $U\in\mathfrak{U}$, if the restriction $S|_U=\sum_{i=1}^{m}e_i^U\otimes T_i$, where $T_1$, $\ldots$, $T_m\in \mathcal{D}^{\prime p,q}(U)$, then $(j^*S)|_{V\cap U}=\sum_{i=1}^{m}e^U_i|_{V\cap U}\otimes T_i|_{V\cap U}$.

\subsubsection{\emph{\textbf{Pullbacks and pushouts}}}
Let $f:Y\rightarrow X$ be a holomorphic map of connected complex manifolds and $r=\textrm{dim}_{\mathbb{C}}Y-\textrm{dim}_{\mathbb{C}}X$. Put $f^*\mathcal{E}=f^{-1}\mathcal{E}\otimes_{f^{-1}\mathcal{O}_X}\mathcal{O}_Y$ the inverse image of $\mathcal{E}$ by $f$. The adjunction morphism $\mathcal{E}\rightarrow f_*f^*\mathcal{E}$ induces $f_U^*:\Gamma(U,\mathcal{E})\rightarrow \Gamma(f^{-1}(U),f^*\mathcal{E})$ for any open set $U\subseteq X$, where $f_U:f^{-1}(U)\rightarrow U$ is the restriction of $f$ to $f^{-1}(U)$.

\textbf{Pullbacks.}
The pullback $\mathcal{A}_X^{p,q}\rightarrow f_*\mathcal{A}_Y^{p,q}$ induces a morphism of sheaves
\begin{displaymath}
\mathcal{E}\otimes\mathcal{A}_X^{p,q}\rightarrow \mathcal{E}\otimes f_*\mathcal{A}_Y^{p,q}=f_*(f^*\mathcal{E}\otimes\mathcal{A}_Y^{p,q}),
\end{displaymath}
hence induces a \emph{pullback} of $\mathcal{E}$-valued $(p,q)$-forms
\begin{equation}\label{pullback}
f^*:\Gamma(X,\mathcal{E}\otimes\mathcal{A}_X^{p,q})\rightarrow\Gamma(Y,f^*\mathcal{E}\otimes\mathcal{A}_Y^{p,q}).
\end{equation}
Suppose that  $U$  is a $\mathcal{E}$-free  open set in $X$ and $e_1$, $\ldots$, $e_m$ is  a basis of $\Gamma(U,\mathcal{E})$ as an $\mathcal{O}_X(U)$-module.
Obviously, $f_U^*e_1$, $\ldots$, $f_U^*e_m$ is  a basis of $\Gamma(f^{-1}(U),f^*\mathcal{E})$ as an $\mathcal{O}_Y(f^{-1}(U))$-module. For a $\mathcal{E}$-valued $(p,q)$-form $\omega$, set $\omega|_U=\sum_{i=1}^{m}e_i\otimes \alpha_i$, where $\alpha_1$, $\ldots$, $\alpha_m\in \mathcal{A}^{p,q}(U)$.  Then
\begin{equation}\label{pullback-rep}
(f^*\omega)|_{f^{-1}(U)}=\sum_{i=1}^{m}f_U^*e_i\otimes f_U^*\alpha_i.
\end{equation}
We have
\begin{lem}\label{pullback-support}
$\emph{supp}f^*\omega\subseteq f^{-1}(\emph{supp}\omega)$.
\end{lem}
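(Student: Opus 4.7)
The plan is to argue pointwise on the complement of $f^{-1}(\mathrm{supp}\,\omega)$, reducing to a local calculation via the formula (\ref{pullback-rep}) for the pullback in a local frame.

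Concretely, I would take an arbitrary point $y\in Y\setminus f^{-1}(\mathrm{supp}\,\omega)$, so $f(y)\in X\setminus\mathrm{supp}\,\omega$. Since $X\setminus\mathrm{supp}\,\omega$ is open and $\mathcal{E}$-free open sets form a basis of the topology of $X$, I can choose an $\mathcal{E}$-free open neighborhood $U$ of $f(y)$ with $U\subseteq X\setminus\mathrm{supp}\,\omega$, together with a basis $e_1,\ldots,e_m$ of $\Gamma(U,\mathcal{E})$ over $\mathcal{O}_X(U)$. Write $\omega|_U=\sum_{i=1}^m e_i\otimes\alpha_i$ with $\alpha_i\in\mathcal{A}^{p,q}(U)$. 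Because $\omega$ vanishes identically on $U$ and the $e_i$ form a basis at every stalk of $\mathcal{E}|_U$, each $\alpha_i$ must be zero on $U$ (this is also immediate from Lemma \ref{support}, since $\mathrm{supp}\,\alpha_i\subseteq\mathrm{supp}\,\omega\cap U=\emptyset$).

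Applying the local formula (\ref{pullback-rep}) gives
\begin{displaymath}
(f^*\omega)|_{f^{-1}(U)}=\sum_{i=1}^m f_U^*e_i\otimes f_U^*\alpha_i=0,
\end{displaymath}
so $f^*\omega$ vanishes on the open neighborhood $f^{-1}(U)$ of $y$. Hence $y\notin\mathrm{supp}\,f^*\omega$, which proves the inclusion $\mathrm{supp}\,f^*\omega\subseteq f^{-1}(\mathrm{supp}\,\omega)$.

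There is no real obstacle here: the only subtlety is making sure that the local representation is available and well-defined, which is guaranteed by working in an $\mathcal{E}$-free neighborhood; once that is set up, vanishing of $\omega$ on $U$ forces vanishing of all components $\alpha_i$, and the explicit formula (\ref{pullback-rep}) transports this vanishing to $f^{-1}(U)$.
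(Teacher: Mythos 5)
Your proof is correct and follows essentially the same route as the paper: pick $y$ outside $f^{-1}(\mathrm{supp}\,\omega)$, shrink to an $\mathcal{E}$-free neighborhood $U$ of $f(y)$ contained in the complement of $\mathrm{supp}\,\omega$, and use the local formula (\ref{pullback-rep}) to see that $f^*\omega$ vanishes on $f^{-1}(U)$. The only difference is that you justify the vanishing of the components $\alpha_i$ via Lemma \ref{support}, which the paper leaves implicit.
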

\begin{proof}
For any $y\in Y- f^{-1}(\textrm{supp}\omega)$, $f(y)\in X-\textrm{supp}\omega$. There exists a $\mathcal{E}$-free open neighborhood $V$ of $f(y)$ such that $V\subseteq X- \textrm{supp}\omega$, and then $\omega|_V=0$. By the local representation (\ref{pullback-rep}) of $f^*\omega$, $(f^*\omega)|_{f^{-1}(V)}=0$, i.e., $\textrm{supp}f^*\omega\cap f^{-1}(V)=\emptyset$. So $y$ is not in $\textrm{supp}f^*\omega$. We proved the first part.
\end{proof}
If $f$ is \emph{proper}, the pullback (\ref{pullback})  gives
\begin{displaymath}
f^*:\Gamma_c(X,\mathcal{E}\otimes\mathcal{A}_X^{p,q})\rightarrow\Gamma_c(Y,f^*\mathcal{E}\otimes\mathcal{A}_Y^{p,q}),
\end{displaymath}
by Lemma \ref{pullback-support}.

\textbf{Pushouts.}
Assume that $S$ is a $f^*\mathcal{E}$-valued $(p,q)$-current on $Y$  satisfying that $f|_{\textrm{supp}S}:\textrm{supp}S\rightarrow X$ is \emph{proper}. Let $\mathfrak{U}$ be a  $\mathcal{E}$-free  covering of $X$ and  $e^U_1$, $\ldots$, $e^U_m$  a basis of $\Gamma(U,\mathcal{E})$ as an $\mathcal{O}_X(U)$-module for any $U\in\mathfrak{U}$.   For $U\in\mathfrak{U}$, $S$ can be written as $\sum_{i=1}^{m}f_U^*e^U_i\otimes T_i$ on $f^{-1}(U)$, where $T_1$, $\ldots$, $T_m\in \mathcal{D}^{\prime p,q}(f^{-1}(U))$.  By Lemma \ref{support}, $\textrm{supp}T_i\subseteq\textrm{supp}S\cap f^{-1}(U)$, and then $f_U|_{\textrm{supp}T_i}:\textrm{supp}T_i\rightarrow U$ is proper. So $f_{U*}T_i$ is well defined. We get  a $\mathcal{E}$-valued $(p-r,q-r)$-current
\begin{equation}\label{pushout}
\widetilde{S}_U=\sum_{i=1}^{m}e^U_i\otimes f_{U*}T_i
\end{equation}
on $U$. For $U$, $U^\prime\in\mathfrak{U}$ satisfying $U\cap U^\prime\neq\emptyset$, $\widetilde{S}_U=\widetilde{S}_{U^\prime}$ on $U\cap U^\prime$. Hence we get a $\mathcal{E}$-valued $(p-r,q-r)$-current on $X$ , denoted by $f_*S$, such that $(f_*S)|_U=\widetilde{S}_U$.  Similarly with $j_*$, the definition of  $f_*S$ is independent of the choice of $\mathcal{E}$-free coverings.

\begin{lem}\label{pushout-support}
If $f|_{\emph{supp}S}:\emph{supp}S\rightarrow X$ is proper, then $\emph{supp}f_*S\subseteq f(\emph{supp}S)$.
\end{lem}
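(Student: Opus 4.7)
The goal is to show that the closed set $f(\text{supp}\,S)\subseteq X$ contains the support of the pushout. Since complex manifolds are locally compact Hausdorff and $f|_{\text{supp}S}\colon \text{supp}\,S\rightarrow X$ is proper, I would first record that $f(\text{supp}\,S)$ is closed in $X$: properness for maps between locally compact Hausdorff spaces implies that images of closed subsets are closed, and $\text{supp}\,S$ is closed in $\text{supp}\,S$ trivially, so $f(\text{supp}\,S)=f|_{\text{supp}S}(\text{supp}\,S)$ is closed in $X$.

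The strategy is then to verify the support condition pointwise. Pick an arbitrary point $x\in X-f(\text{supp}\,S)$; because $f(\text{supp}\,S)$ is closed and the $\mathcal{E}$-free open sets form a basis of the topology of $X$ (as $\mathcal{E}$ is locally free), I can choose an $\mathcal{E}$-free open neighborhood $U$ of $x$ such that $U\cap f(\text{supp}\,S)=\emptyset$. This gives $f^{-1}(U)\cap \text{supp}\,S=\emptyset$, hence $S$ vanishes on $f^{-1}(U)$.

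Now I would apply the local computation built into the definition \eqref{pushout}. Fix a basis $e_1^U,\ldots,e_m^U$ of $\Gamma(U,\mathcal{E})$ and write
\[
S|_{f^{-1}(U)}=\sum_{i=1}^m f_U^*e_i^U\otimes T_i,\qquad T_i\in\mathcal{D}^{\prime p,q}(f^{-1}(U)).
\]
Since $f_U^*e_1^U,\ldots,f_U^*e_m^U$ is a basis of $\Gamma(f^{-1}(U),f^*\mathcal{E})$, Lemma \ref{support} gives $\text{supp}\,T_i\subseteq \text{supp}\,S\cap f^{-1}(U)=\emptyset$, so each $T_i=0$. Consequently $f_{U*}T_i=0$ for every $i$ and $(f_*S)|_U=\sum_{i=1}^m e_i^U\otimes f_{U*}T_i=0$, proving $x\notin\text{supp}\,f_*S$.

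I do not expect any serious obstacle; the only subtle point is justifying that $f(\text{supp}\,S)$ is closed, which is exactly what properness of $f|_{\text{supp}S}$ buys us, and the rest is just unwinding the local formula \eqref{pushout} together with Lemma \ref{support}. The argument never uses anything beyond the construction of $f_*$ that has already been established in the excerpt.
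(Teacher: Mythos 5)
Your proof is correct and follows essentially the same route as the paper's: pick $x\notin f(\mathrm{supp}\,S)$, find an $\mathcal{E}$-free neighborhood $U$ disjoint from $f(\mathrm{supp}\,S)$, observe $S|_{f^{-1}(U)}=0$, and conclude $(f_*S)|_U=0$ from the local formula. You are in fact slightly more careful than the paper, which tacitly assumes $f(\mathrm{supp}\,S)$ is closed; your explicit appeal to properness of $f|_{\mathrm{supp}\,S}$ to justify this is exactly the right (and necessary) point.
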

\begin{proof}
For any $x\in X- f(\textrm{supp}S)$, there exists a $\mathcal{E}$-free open neighborhood $U$ of $x$ such that $U\subseteq X- f(\textrm{supp}S)$. Clearly, $f^{-1}(U)\cap\textrm{supp}S=\emptyset$, and then $S|_{f^{-1}(U)}=0$. By the definition (\ref{pushout}) of $f_*S$, $(f_*S)|_U=0$, i.e., $\textrm{supp}f_*S\cap U=\emptyset$. Hence, $x$ is not in $\textrm{supp}f_*S$.  The second part was proved.
\end{proof}
By Lemma \ref{pushout-support}, we get a \emph{pushout}
\begin{displaymath}
f_*:\Gamma_c(Y,f^*\mathcal{E}\otimes\mathcal{D}_Y^{\prime p,q})\rightarrow\Gamma_c(X,\mathcal{E}\otimes\mathcal{D}_X^{\prime p-r,q-r}).
\end{displaymath}
In particular, for  the inclusion $j:V\rightarrow X$ of an open subset $V$ of $X$, $j_*$ is the extension by zero, whose restriction to $\Gamma_c(V,\mathcal{E}\otimes\mathcal{A}_X^{p,q})$  concides with (\ref{extension}). If $f$ is \emph{proper},  by Lemma \ref{pushout-support}, (\ref{pushout}) defines a \emph{pushout}
\begin{displaymath}
f_*:\Gamma(Y,f^*\mathcal{E}\otimes\mathcal{D}_Y^{\prime p,q})\rightarrow\Gamma(X,\mathcal{E}\otimes\mathcal{D}_X^{\prime p-r,q-r}),
\end{displaymath}
which is actually induced by the pushout $f_*\mathcal{D}_Y^{\prime p,q}\rightarrow \mathcal{D}_X^{\prime p-r,q-r}$.
%induces a morphism of sheaves $f_*(f^*\mathcal{E}\otimes_{\mathcal{O}_Y}\mathcal{D}_Y^{\prime p,q})=\mathcal{E}\otimes_{\mathcal{O}_X}f_*\mathcal{D}_Y^{\prime p,q}\rightarrow \mathcal{E}\otimes_{\mathcal{O}_X}\mathcal{D}_X^{\prime p-r,q-r}$,

\begin{prop}\label{com1}
Suppose that $f:Y\rightarrow X$ is a proper holomorphic map of connected complex manifolds and $\mathcal{E}$ is a locally free sheaf of $\mathcal{O}_X$-modules of finite rank on $X$. For an open set $V$ of $X$, let $f_V:f^{-1}(V)\rightarrow V$ be the restriction of $f$ to $f^{-1}(V)$ and let  $j:V\rightarrow X$ and $j':f^{-1}(V)\rightarrow Y$  be inclusions. Then, $j'_*f_V^*=f^*j_*$ on $\Gamma_c(V,\mathcal{E}\otimes\mathcal{A}_X^{*,*})$ and $f_{V*}j'^*=j^*f_*$ on $\Gamma(Y,f^*\mathcal{E}\otimes\mathcal{D}_Y^{\prime*,*})$.
\end{prop}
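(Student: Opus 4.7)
The plan is to reduce both identities to their scalar (non-bundle) analogues via local representations on an $\mathcal{E}$-free covering of $X$. First I would fix an $\mathcal{E}$-free open covering $\mathfrak{U}$ of $X$ and, for each $U\in\mathfrak{U}$, choose a basis $e_1^U,\ldots,e_m^U$ of $\Gamma(U,\mathcal{E})$ as an $\mathcal{O}_X(U)$-module. Then $\{f^{-1}(U):U\in\mathfrak{U}\}$ is an $f^*\mathcal{E}$-free covering of $Y$ with basis $f_U^*e_1^U,\ldots,f_U^*e_m^U$, and $\{U\cap V:U\in\mathfrak{U}\}$ is an $\mathcal{E}|_V$-free covering of $V$ with basis $e_i^U|_{U\cap V}$. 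All four operators $j_*, f^*, j'_*, f_*$ will then be computed via their local-patching definitions relative to these three compatible coverings.

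For the first identity, take $\omega\in\Gamma_c(V,\mathcal{E}\otimes\mathcal{A}_X^{p,q})$ with local representation $\omega|_{U\cap V}=\sum_i e_i^U|_{U\cap V}\otimes\alpha_i$. Applying $j_*$ first via (\ref{extension}) and then $f^*$ via (\ref{pullback-rep}) gives $(f^*j_*\omega)|_{f^{-1}(U)}=\sum_i f_U^*e_i^U\otimes f_U^*\tilde{\alpha}_i$, where $\tilde{\alpha}_i$ is the extension of $\alpha_i$ by zero from $U\cap V$ to $U$. Applying the pullback first and then $j'_*$ gives $(j'_* f_V^*\omega)|_{f^{-1}(U)}=\sum_i f_U^*e_i^U\otimes\widetilde{f^*\alpha_i}$, where $\widetilde{f^*\alpha_i}$ denotes the extension by zero of $f^*\alpha_i$ from $f^{-1}(U\cap V)$ to $f^{-1}(U)$. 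Equality therefore reduces to the scalar identity $f_U^*\tilde{\alpha}_i=\widetilde{f^*\alpha_i}$ on $f^{-1}(U)$, which holds because both sides agree with $f^*\alpha_i$ on $f^{-1}(U\cap V)$ (pullback commutes with restriction) and both vanish on $f^{-1}(U\setminus V)$---the right side by the definition of extension by zero, the left side by Lemma \ref{pullback-support} combined with $\tilde{\alpha}_i=0$ on $U\setminus V$.

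The second identity is handled in the same spirit. For $S\in\Gamma(Y,f^*\mathcal{E}\otimes\mathcal{D}_Y^{\prime p,q})$ with $S|_{f^{-1}(U)}=\sum_i f_U^*e_i^U\otimes T_i$, formula (\ref{pushout}) together with the definition of $j^*$ gives $(j^*f_*S)|_{U\cap V}=\sum_i e_i^U|_{U\cap V}\otimes(f_{U*}T_i)|_{U\cap V}$, while restricting $S$ first and then pushing forward gives $(f_{V*}j'^*S)|_{U\cap V}=\sum_i e_i^U|_{U\cap V}\otimes(f_V|_{f^{-1}(U\cap V)})_*(T_i|_{f^{-1}(U\cap V)})$. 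This reduces the identity to the standard scalar fact that pushforward of currents by a proper holomorphic map commutes with restriction to open subsets, which is simply the naturality of the sheaf-theoretic direct image $f_*\mathcal{D}_Y^{\prime p,q}\to\mathcal{D}_X^{\prime p-r,q-r}$.

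The main difficulty is organizational rather than substantive: one must ensure that the three compatible coverings and their bases are chosen so that each of the four composite operators admits an unambiguous local expression, and that the resulting expressions are indeed those produced by the local-to-global patching in the definitions of $j_*$ and $f_*$. Once this bookkeeping is in place, the bundle-valued equalities on each member of the covering are direct consequences of the scalar identities, and they glue to the asserted global identities because, as noted in Sections 3.1.2 and 3.1.3, the definitions of $j_*$ and $f_*$ do not depend on the choice of $\mathcal{E}$-free covering.
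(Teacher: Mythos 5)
Your proposal is correct and follows essentially the same route as the paper's proof: both reduce the two identities, via local representations $\omega|_{U\cap V}=\sum_i e_i^U|_{U\cap V}\otimes\alpha_i$ and $S|_{f^{-1}(U)}=\sum_i f_U^*e_i^U\otimes T_i$ on an $\mathcal{E}$-free covering, to the scalar identities $f_U^*\tilde{\alpha}_i=\widetilde{f_{V\cap U}^*\alpha_i}$ and $(f_{U*}T_i)|_{U\cap V}=f_{V\cap U*}(T_i|_{f^{-1}(U\cap V)})$. Your write-up even supplies a little more detail than the paper on why the first scalar identity holds (agreement on $f^{-1}(U\cap V)$ plus vanishing off it via Lemma \ref{pullback-support}), where the paper simply asserts it.
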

\begin{proof}
Let $U$ be any $\mathcal{E}$-free open set of $X$ and  $e_1$, $\ldots$, $e_m$  a basis of $\Gamma(U,\mathcal{E})$ as an $\mathcal{O}_X(U)$-module.

For $\omega\in\Gamma_c(V,\mathcal{E}\otimes\mathcal{A}_X^{*,*})$, $\omega|_{V\cap U}=\sum_{i=1}^me_i|_{V\cap U}\otimes\alpha_i$,
where $\alpha_1$, $\ldots$, $\alpha_m\in\mathcal{A}^{*,*}(V\cap U)$. By defintions,
\begin{displaymath}
(j_*\omega)|_{U}=\sum_{i=1}^me_i\otimes \tilde{\alpha}_i,
\end{displaymath}
\begin{displaymath}
(f^*j_*\omega)|_{f^{-1}(U)}=\sum_{i=1}^mf_U^*e_i\otimes f_U^{*}\tilde{\alpha}_i,
\end{displaymath}
\begin{displaymath}
\begin{aligned}
(f_V^{*}\omega)|_{f^{-1}(V\cap U)}=&\sum_{i=1}^mf_{V\cap U}^*(e_i|_{V\cap U})\otimes f_{V\cap U}^{*}\alpha_i\\
=&\sum_{i=1}^m(f_U^*e_i)|_{f^{-1}(V\cap U)}\otimes f_{V\cap U}^{*}\alpha_i,
\end{aligned}
\end{displaymath}
\begin{displaymath}
(j^{\prime}_*f_V^{*}\omega)|_{f^{-1}(U)}=\sum_{i=1}^mf_U^*e_i\otimes \widetilde{f_{V\cap U}^{*}\alpha_i},
\end{displaymath}
where $\tilde{\alpha}_i\in\mathcal{A}^{*,*}(U)$ and $\widetilde{f_{V\cap U}^{*}\alpha_i}\in\mathcal{A}^{*,*}(f^{-1}(U))$ denote the extensions of $\alpha_i$ and $f_{V\cap U}^{*}\alpha_i$ by zero, respectively. Since $f_U^{*}\tilde{\alpha}_i=\widetilde{f_{V\cap U}^{*}\alpha_i}$,  $(f^*j_*\omega)|_{f^{-1}(U)}=(j^{\prime}_*f_V^*\omega)|_{f^{-1}(U)}$. We proved the first part.

For $S\in\Gamma(Y,f^*\mathcal{E}\otimes\mathcal{D}_Y^{\prime*,*})$, $S|_{f^{-1}(U)}=\sum_{i=1}^{m}f_U^*e_i\otimes T_i$,
where $T_1$, $...$, $T_m\in \mathcal{D}^{\prime p,q}(f^{-1}(U))$. Then
\begin{displaymath}
\begin{aligned}
(j^\prime_*S)|_{f^{-1}(V\cap U)}=&\sum_{i=1}^m(f_U^*e_i)|_{f^{-1}(V\cap U)}\otimes T_i|_{f^{-1}(V\cap U)}\\
=&\sum_{i=1}^mf_{V\cap U}^*(e_i|_{V\cap U})\otimes T_i|_{f^{-1}(V\cap U)},
\end{aligned}
\end{displaymath}
\begin{displaymath}
(f_{V*}j^\prime_*S)|_{V\cap U}=\sum_{i=1}^me_i|_{V\cap U}\otimes f_{V\cap U*}(T_i|_{f^{-1}(V\cap U)}),
\end{displaymath}
\begin{displaymath}
(f_*S)|_{U}=\sum_{i=1}^{m}e_i\otimes f_{U*}T_i,
\end{displaymath}
\begin{displaymath}
(j^*f_*S)|_{V\cap U}=\sum_{i=1}^{m}e_i|_{V\cap U}\otimes (f_{U*}T_i)|_{V\cap U}.
\end{displaymath}
%For any $\alpha\in\mathcal{A}^*_c(V\cap U)$,
%\begin{displaymath}
%\begin{aligned}
%\langle(f_{U*}T_i)|_{V\cap U},\alpha\rangle%=&<T_i|_{f^{-1}(V\cap U)},f_{V\cap U}^*\alpha>\\
%=&\langle T_i,f_{U}^*\tilde{\alpha}\rangle\\
%=&\langle T_i,\widetilde{f_{V\cap U}^*\alpha}\rangle\\
%=&<f_{U*}T_i,\tilde{\alpha}>\\
%=&\langle f_{V\cap U*}(T_i|_{f^{-1}(V\cap U)}),\alpha\rangle,
%\end{aligned}
%\end{displaymath}
%where $\langle,\rangle$ is the pair of the topological dual between currents and forms with compact supports.
It is easily to check that $f_{V\cap U*}(T_i|_{f^{-1}(V\cap U)})=(f_{U*}T_i)|_{V\cap U}$, hence $(f_{V*}j^\prime_*S)|_{V\cap U}=(j^*f_*S)|_{V\cap U}$. We complete the proof.
\end{proof}

\subsubsection{\emph{\textbf{Operators on cohomology}}}
We still denote by $\overline{\partial}$ the differentials $1\otimes \overline{\partial}:\mathcal{E}\otimes\mathcal{A}_X^{p,q}\rightarrow \mathcal{E}\otimes\mathcal{A}_X^{p,q+1}$ and $\mathcal{E}\otimes\mathcal{D}_X^{\prime p,q}\rightarrow \mathcal{E}\otimes\mathcal{D}_X^{\prime p,q+1}$.
Then $\mathcal{E}\otimes\Omega_X^p$ has two soft resolutions
\begin{displaymath}
\xymatrix{
0\ar[r] &\mathcal{E}\otimes\Omega_X^p\ar[r]^{i} &\mathcal{E}\otimes\mathcal{A}_{X}^{p,0}\ar[r]^{\quad\quad\bar{\partial}}&\cdots\ar[r]^{\bar{\partial}\quad\quad}&\mathcal{E}\otimes\mathcal{A}_{X}^{p,n}\ar[r]&0
}
\end{displaymath}
and
\begin{displaymath}
\xymatrix{
0\ar[r] &\mathcal{E}\otimes\Omega_X^p\ar[r]^{i} &\mathcal{E}\otimes\mathcal{D}_{X}^{\prime p,0}\ar[r]^{\quad\quad\bar{\partial}}&\cdots\ar[r]^{\bar{\partial}\quad\quad}&\mathcal{E}\otimes\mathcal{D}_{X}^{\prime p,n}\ar[r]&0
},
\end{displaymath}
where $\textrm{dim}_{\mathbb{C}}X=n$. So
\begin{displaymath}
H^q(X,\mathcal{E}\otimes\Omega_X^p)=H^q(\Gamma(X,\mathcal{E}\otimes\mathcal{A}_{X}^{p,\bullet}))=H^q(\Gamma(X,\mathcal{E}\otimes\mathcal{D}_{X}^{\prime p,\bullet})),
\end{displaymath}
which are called \emph{$\mathcal{E}$-valued Dolbeault cohomology} and denoted by  $H^{p,q}(X,\mathcal{E})$. They coincides with the bundle-valued Dolbeault cohomology $H^{p,q}(X,E)$ (seeing \cite{Dem}, p. 268, Prop. 11.5), where $E$ is the holomorphic vector bundle associated to $\mathcal{E}$.  Similarly, denote by $H_c^{p,q}(X,\mathcal{E})$ the $\mathcal{E}$-valued Dolbeault cohomology with compact support. Clearly, all operators defined in Section 3.1.2 and 3.1.3 commutate with $\bar{\partial}$, hence induce the corresponding morphisms at the level of cohomology.
\subsubsection{\emph{\textbf{Wedge and cup products}}}
Assume that $\mathcal{E}$ and $\mathcal{F}$ are locally free sheaves of $\mathcal{O}_X$-modules of rank $m$ and $n$ on $X$ respectively and $\mathfrak{U}$ is an open covering of $X$ satisfying that any $U\in\mathfrak{U}$ is $\mathcal{E}$-free and $\mathcal{F}$-free both. Let $e^U_1$, $\ldots$, $e^U_m$ and $f^U_1$, $\ldots$, $f^U_n$ be bases of $\Gamma(U,\mathcal{E})$ and $\Gamma(U, \mathcal{F})$ as $\mathcal{O}_X(U)$-modules, respectively.

For $S\in\Gamma(X, \mathcal{E}\otimes\mathcal{D}_X^{\prime p,q})$, $\omega\in\Gamma(X, \mathcal{F}\otimes\mathcal{A}_X^{r,s})$ and $U\in\mathfrak{U}$, $S$ and $\omega$  are represented by $\sum_{i=0}^{m}e^U_i\otimes T_i$ and $\sum_{i=0}^{n}f^U_i\otimes \alpha_i$ on $U$ respectively, where $T_i\in\mathcal{D}^{\prime p,q}(U)$ and $\alpha_i\in\mathcal{A}^{r,s}(U)$ for any $i$. Then
\begin{displaymath}
\sum_{1\leq i\leq m}\sum_{1\leq j\leq n} e^U_i\otimes f^U_j\otimes (T_i\wedge\alpha_j)
\end{displaymath}
gives a $\mathcal{E}\otimes\mathcal{F}$-valued $(p+r,q+s)$-current on $U$. They are glued as a global section of  $ \mathcal{E}\otimes\mathcal{F}\otimes\mathcal{D}_X^{\prime p+r,q+s}$ on $X$, which is independent of the choice of open coverings. It is called the \emph{wedge product} of $S$ and $\omega$, denoted by $S\wedge\omega$. Similarly, the $\mathcal{F}\otimes\mathcal{E}$-valued current $\omega\wedge S$ and the $\mathcal{E}\otimes\mathcal{F}$-valued form $\psi\wedge \omega$  on $X$ can be defined well, where $\psi\in\Gamma(X, \mathcal{E}\otimes\mathcal{A}_X^{p,q})$. Clearly,
\begin{displaymath}
\overline{\partial}(S\wedge\omega)=\overline{\partial}S\wedge\omega+(-1)^{p+q}S\wedge \overline{\partial}\omega,
\end{displaymath}
\begin{displaymath}
f^*(\psi\wedge\omega)=f^*\psi\wedge f^*\omega,
\end{displaymath}
where $f:Y\rightarrow X$ is a holomorphic map of complex manifolds. In addition, if $T$ is a $f^*\mathcal{E}$-valued current on $Y$ satisfying that $f|_{\textrm{supp}T}:\textrm{supp}T\rightarrow X$ is proper, then
\begin{equation}\label{pro-formula1}
f_*(T\wedge f^*\omega)=f_*T\wedge\omega,
\end{equation}
which is called the \emph{projection formula}. Actually, it is the classical projection formula of forms and currents locally.

Define the \emph{cup product} on cohomology groups
\begin{displaymath}
\cup:H^{p,q}(X,\mathcal{E})\times H^{r,s}(X,\mathcal{F})\rightarrow H^{p+r,q+s}(X,\mathcal{E}\otimes\mathcal{F})
\end{displaymath}
as $[\psi]\cup[\omega]=[\psi\wedge\omega]$ or $[S]\cup[\omega]=[S\wedge\omega]$. Similarly, we can define the cup products between $H^{p,q}(X,\mathcal{E})$ or $H_c^{p,q}(X,\mathcal{E})$ and $H^{r,s}(X,\mathcal{F})$ or $H_c^{r,s}(X,\mathcal{F})$.
%Let $f:Y\rightarrow X$ be a holomorphic map between complex manifolds, and let $\mathcal{E}$ and $\mathcal{F}$ be locally free sheaves of $\mathcal{O}_X$-modules of finite ranks on $X$.
By the projection formula (\ref{pro-formula1}),
\begin{equation}\label{pro-formula2}
f_*(\varphi\cup f^*\eta)=f_*\varphi\cup\eta,
\end{equation}
for $\varphi\in H_c^{*,*}(Y,f^*\mathcal{E})$, $\eta\in H^{*,*}(X,\mathcal{F})$. Moreover, if $f$ is proper, they also hold for  $\varphi\in H^{*,*}(Y,f^*\mathcal{E})$, $\eta\in H^{*,*}(X,\mathcal{F})$, or $\varphi\in H_c^{*,*}(Y,f^*\mathcal{E})$, $\eta\in H_c^{*,*}(X,\mathcal{F})$, or $\varphi\in H^{*,*}(Y,f^*\mathcal{E})$, $\eta\in H_c^{*,*}(X,\mathcal{F})$.

\subsection{Smooth manifolds and local systems}
Recall that, for a topology space $X$, \emph{a local system of $\mathbb{R}$-modules} on $X$ is a locally constant sheaf of $\mathbb{R}$-modules on $X$, or equivalently, a locally free sheaf of $\underline{\mathbb{R}}_X$-modules on $X$, where $\underline{\mathbb{R}}_X$ is the constant sheaf with stalks $\mathbb{R}$ on $X$. Assume that $\mathcal{V}$ is a local system of $\mathbb{R}$-modules on $X$. An open subset $U$ of $X$ is said to be \emph{$\mathcal{V}$-constant}, if the restriction $\mathcal{V}|_U$ is a contant sheaf. An open covering $\mathfrak{U}$ of $X$ is said to be \emph{$\mathcal{V}$-constant}, if all $U\in\mathfrak{U}$ are $\mathcal{V}$-constant.

For a smooth map $f:Y\rightarrow X$ and  local systems $\mathcal{V}$, $\mathcal{W}$ of $\mathbb{R}$-modules of finite ranks on $X$, all notions in Section 3.1 can be similarly defined and the corresponding results there also hold, where we only need to  replace  $\mathcal{O}_X$,  $\mathcal{E}$, $\mathcal{E}$-free, $\mathcal{A}_X^{*,*}$, $\mathcal{D}_X^{\prime *,*}$, $f^*\mathcal{E}$,  $H^{*,*}(X,\mathcal{E})$, $H_c^{*,*}(X,\mathcal{E})$ and $\mathcal{E}\otimes_{\mathcal{O}_X}\mathcal{F}$ with $\underline{\mathbb{R}}_X$, $\mathcal{V}$, $\mathcal{V}$-constant, $\mathcal{A}_X^{*}$, $\mathcal{D}_X^{\prime *}$, $f^{-1}\mathcal{V}$,  $H^*(X,\mathcal{V})$, $H_c^*(X,\mathcal{V})$ and $\mathcal{V}\otimes_{\underline{\mathbb{R}}_X}\mathcal{W}$, respectively. It is noteworthy that, if the definitions of notions involve the currents, then the related manifolds are necessarily oriented. Moreover, all notions and results in this subsection are still valid, if we use $\mathbb{C}$ instead of $\mathbb{R}$.

We list out some results as follows.
\begin{lem}\label{support2}
Suppose that $X$ is a connected smooth manifold and $\mathcal{V}$ is a local system of $\mathbb{R}$-modules of rank $m$ on $X$. Let $U$ be a $\mathcal{V}$-constant open subset of $X$ and  $v_1$, $\ldots$, $v_m$  a basis of $\Gamma(U,\mathcal{V})$.

$(1)$ For $\omega\in\Gamma(X, \mathcal{V}\otimes\mathcal{A}_X^{p})$, if the restriction $\omega|_U$ to $U$ is written as $\sum_{i=1}^{m}v_i\otimes \alpha_i$, where $\alpha_1$, $\ldots$, $\alpha_m\in \mathcal{A}_X^{p}(U)$, then $\emph{supp}\alpha_i\subseteq\emph{supp}\omega\cap U$, for any $1\leq i\leq m$.

$(2)$ Assume that $X$ is oriented. For $S\in\Gamma(X, \mathcal{V}\otimes\mathcal{D}_X^{\prime p})$, if $S|_U=\sum_{i=1}^{m}e_i\otimes T_i$, where $T_1$, $\ldots$, $T_m\in \mathcal{D}_X^{\prime p}(U)$, then $\emph{supp}T_i\subseteq\emph{supp}S\cap U$, for any $1\leq i\leq m$.
\end{lem}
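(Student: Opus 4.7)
The plan is to mirror the proof of Lemma \ref{support} (the analogous statement for locally free sheaves) essentially verbatim, substituting $\mathcal{V}$-constant open sets and the basis $v_1, \ldots, v_m$ of $\Gamma(U, \mathcal{V})$ for the $\mathcal{E}$-free opens and local frame used there. Indeed, the statement is explicitly flagged in the paragraph preceding it as one of the routine translations obtained by swapping $\mathcal{O}_X$, $\mathcal{E}$, $\mathcal{A}_X^{p,q}$, and $\mathcal{D}_X^{\prime p,q}$ for $\underline{\mathbb{R}}_X$, $\mathcal{V}$, $\mathcal{A}_X^p$, and $\mathcal{D}_X^{\prime p}$, with the orientation hypothesis inserted whenever currents appear.

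The one algebraic fact I would want to verify first is that on a $\mathcal{V}$-constant open set $W$, a basis $v_1, \ldots, v_m$ of $\Gamma(W, \mathcal{V})$ gives an isomorphism of sheaves $\mathcal{V}|_W \cong \underline{\mathbb{R}}_X^{m}|_W$, and hence induces isomorphisms
\begin{displaymath}
(\mathcal{V} \otimes_{\underline{\mathbb{R}}_X} \mathcal{A}_X^{p})(W) \;\cong\; \mathcal{A}_X^{p}(W)^{m}, \qquad (\mathcal{V} \otimes_{\underline{\mathbb{R}}_X} \mathcal{D}_X^{\prime p})(W) \;\cong\; \mathcal{D}_X^{\prime p}(W)^{m}.
\end{displaymath}
This forces the local expansion coefficients $\alpha_i$ (resp.\ $T_i$) to be uniquely determined, which is the only real content used in the proof.

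Granted this uniqueness, part (1) proceeds as follows. Pick $x \in U$ with $x \notin \mathrm{supp}\,\omega$ and choose a $\mathcal{V}$-constant open neighborhood $W \subseteq U \setminus \mathrm{supp}\,\omega$ of $x$ (shrinking $W$ if necessary so that it lies in the intersection of $U$ with a $\mathcal{V}$-constant chart and misses $\mathrm{supp}\,\omega$). Restricting the relation $\omega|_U = \sum_i v_i \otimes \alpha_i$ further to $W$ gives $\sum_i v_i|_W \otimes \alpha_i|_W = 0$, and the uniqueness observation then forces $\alpha_i|_W = 0$ for each $i$. Hence $x \notin \mathrm{supp}\,\alpha_i$, which yields $\mathrm{supp}\,\alpha_i \subseteq \mathrm{supp}\,\omega \cap U$.

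For part (2) the orientation hypothesis is precisely what is needed to make $\mathcal{V}$-valued currents well defined (as indicated in the remark preceding the lemma), and the proof is formally identical: a neighborhood of a point outside $\mathrm{supp}\,S$ kills $S$, and by the same uniqueness of the basis expansion each coefficient current $T_i$ vanishes on that neighborhood. I do not foresee any genuine obstacle; the only minor care point is keeping track of the fact that $v_i|_W$ remains a basis of $\Gamma(W, \mathcal{V})$ on each connected component of $W$, which is immediate from the local constancy of $\mathcal{V}$.
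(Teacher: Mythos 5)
Your proof is correct, and it follows exactly the route the paper intends: the paper gives no written proof for this lemma (it is listed as one of the routine translations of Lemma \ref{support}, which is itself stated with ``we easily get''), and the uniqueness of the coefficient expansion on a $\mathcal{V}$-constant open set together with the contrapositive support argument is precisely the standard argument being invoked. Nothing is missing.
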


\begin{lem}\label{pull-push-support}
Let $f:Y\rightarrow X$ be a smooth map of connected smooth manifolds and $\mathcal{V}$ a local system of $\mathbb{R}$-modules of finite rank on $X$. Set $r=\emph{dim}Y-\emph{dim}X$.

$(1)$ For a $\mathcal{V}$-valued form $\omega$ on $X$,
\begin{displaymath}
\emph{supp}f^*\omega\subseteq f^{-1}(\emph{supp}\omega).
\end{displaymath}

$(2)$ Suppose that $X$ and $Y$ are oriented. If a $f^{-1}\mathcal{V}$-valued current $S$ on $Y$  satisfies that $f|_{\emph{supp}S}:\emph{supp}S\rightarrow X$ is proper, then
\begin{displaymath}
\emph{supp}f_*S\subseteq f(\emph{supp}S).
\end{displaymath}
\end{lem}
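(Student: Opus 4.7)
The statement is the local-system analogue of Lemma \ref{pullback-support} (for pullbacks) and Lemma \ref{pushout-support} (for pushouts) established earlier for locally free sheaves. The author has already indicated that all constructions in Section 3.1 transfer verbatim to the local system setting after replacing $\mathcal{O}_X$, $\mathcal{E}$, $\mathcal{E}$-free, $\mathcal{A}_X^{p,q}$, $f^*\mathcal{E}$ by $\underline{\mathbb{R}}_X$, $\mathcal{V}$, $\mathcal{V}$-constant, $\mathcal{A}_X^p$, $f^{-1}\mathcal{V}$, respectively. My plan is therefore to mimic the proofs of Lemmas \ref{pullback-support} and \ref{pushout-support} in this new setting, invoking Lemma \ref{support2} whenever a local representation is needed.

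For part (1), I would argue by contraposition on points. Fix $y \in Y \setminus f^{-1}(\operatorname{supp}\omega)$, so that $f(y) \in X \setminus \operatorname{supp}\omega$. Choose a $\mathcal{V}$-constant open neighborhood $V$ of $f(y)$ with $V \subseteq X \setminus \operatorname{supp}\omega$; then $\omega|_V = 0$. Writing $\omega|_V = \sum_{i=1}^m v_i \otimes \alpha_i$ relative to a basis $v_1,\ldots,v_m$ of $\Gamma(V,\mathcal{V})$, Lemma \ref{support2}(1) forces each $\alpha_i = 0$, and the local-system analogue of formula (\ref{pullback-rep}) gives $(f^*\omega)|_{f^{-1}(V)} = \sum_i f_V^{-1}v_i \otimes f_V^* \alpha_i = 0$. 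Hence $y \notin \operatorname{supp}f^*\omega$.

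For part (2), I would argue symmetrically. Fix $x \in X \setminus f(\operatorname{supp}S)$. Since $f|_{\operatorname{supp}S}$ is proper, $f(\operatorname{supp}S)$ is closed in $X$, so I can choose a $\mathcal{V}$-constant open neighborhood $U$ of $x$ disjoint from $f(\operatorname{supp}S)$. Then $f^{-1}(U) \cap \operatorname{supp}S = \emptyset$, so $S|_{f^{-1}(U)} = 0$. Writing $S|_{f^{-1}(U)} = \sum_{i=1}^m f_U^{-1}v_i \otimes T_i$ relative to a basis $v_1,\ldots,v_m$ of $\Gamma(U,\mathcal{V})$, Lemma \ref{support2}(2) (whose hypothesis requires $X$ and $Y$ oriented, as assumed here) gives $T_i = 0$ for all $i$. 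The local-system version of the defining formula (\ref{pushout}) then yields $(f_*S)|_U = \sum_i v_i \otimes f_{U*}T_i = 0$, so $x \notin \operatorname{supp}f_*S$.

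There is no serious obstacle; the only place requiring care is closedness of $f(\operatorname{supp}S)$ in part (2), which is precisely what properness of $f|_{\operatorname{supp}S}$ supplies, and the implicit use of Lemma \ref{support2} to pass from the vanishing of a $\mathcal{V}$-valued section to the vanishing of its scalar components in a local trivialization of $\mathcal{V}$. Both proofs are essentially the same two-line arguments as in Lemmas \ref{pullback-support} and \ref{pushout-support}, adapted to the oriented smooth setting.
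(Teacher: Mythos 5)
Your proposal is correct and is essentially the paper's own argument: the paper states this lemma without a separate proof, as an instance of the general transfer from Section 3.1 (replacing $\mathcal{O}_X$, $\mathcal{E}$, $\mathcal{E}$-free by $\underline{\mathbb{R}}_X$, $\mathcal{V}$, $\mathcal{V}$-constant), and your two arguments reproduce verbatim the proofs of Lemmas \ref{pullback-support} and \ref{pushout-support} in that setting. Your explicit remarks on the closedness of $f(\operatorname{supp}S)$ and on using Lemma \ref{support2} to kill the scalar components are correct refinements of details the paper leaves implicit.
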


\begin{prop}\label{com2}
Let $f:Y\rightarrow X$ be a proper smooth map of connected smooth manifolds and $\mathcal{V}$ a local system of $\mathbb{R}$-modules of finite rank on $X$. For an open set $V$ of $X$, let $f_V:f^{-1}(V)\rightarrow V$ be the restriction of $f$ to $f^{-1}(V)$ and let  $j:V\rightarrow X$ and $j':f^{-1}(V)\rightarrow Y$  be inclusions. Then, $j'_*f_V^*=f^*j_*$ on $\Gamma_c(V,\mathcal{V}\otimes\mathcal{A}_X^{*})$. Moreover, if $X$ and $Y$ are oriented,  $f_{V*}j'^*=j^*f_*$ on $\Gamma(Y,f^{-1}\mathcal{V}\otimes\mathcal{D}_Y^{\prime*})$.
\end{prop}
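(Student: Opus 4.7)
The plan is to mimic the proof of Proposition~\ref{com1} verbatim in the smooth/local-system setting, using the parallel formalism already sketched in Section~3.2. The key observation is that once we pick a $\mathcal{V}$-constant open cover of $X$ with chosen bases of local sections, every operator ($f^*$, $f_*$, $j_*$, $j^*$) admits a completely explicit coordinate-wise description, and the identities reduce to the classical identities for ordinary forms and currents (no local system involved). So the argument is essentially bookkeeping.

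Concretely, I would fix an arbitrary $\mathcal{V}$-constant open set $U\subseteq X$ and a basis $v_1,\dots,v_m$ of $\Gamma(U,\mathcal{V})$. Note that $f^{-1}(U)$ is then $f^{-1}\mathcal{V}$-constant with basis $f_U^{-1}v_1,\dots,f_U^{-1}v_m$, where $f_U:f^{-1}(U)\to U$ is the restriction of $f$. For the first identity, take $\omega\in\Gamma_c(V,\mathcal{V}\otimes\mathcal{A}_X^{*})$ and write $\omega|_{V\cap U}=\sum_{i=1}^m v_i|_{V\cap U}\otimes\alpha_i$ with $\alpha_i\in\mathcal{A}^{*}(V\cap U)$. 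Unwinding the definitions of $j_*$ (extension by zero, Section~3.2 analog of~(\ref{extension})) and $f^*$ (the smooth pullback analog of~(\ref{pullback})) gives
\begin{displaymath}
(f^*j_*\omega)|_{f^{-1}(U)}=\sum_{i=1}^m f_U^{-1}v_i\otimes f_U^*\tilde{\alpha}_i,
\end{displaymath}
while by the compatibility of the extension by zero with pullback for ordinary forms one checks
\begin{displaymath}
(j'_*f_V^*\omega)|_{f^{-1}(U)}=\sum_{i=1}^m f_U^{-1}v_i\otimes \widetilde{f_{V\cap U}^*\alpha_i}.
\end{displaymath}
The equality $f_U^*\tilde{\alpha}_i=\widetilde{f_{V\cap U}^*\alpha_i}$ is the classical fact (for ordinary real smooth forms with compact support) that pulling back an extension-by-zero coincides with extending by zero the pullback. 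Since $U$ was arbitrary in a $\mathcal{V}$-constant cover, the two global sections agree.

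For the second identity, assuming $X$ and $Y$ are oriented, take $S\in\Gamma(Y,f^{-1}\mathcal{V}\otimes\mathcal{D}_Y^{\prime*})$ and write $S|_{f^{-1}(U)}=\sum_{i=1}^m f_U^{-1}v_i\otimes T_i$ with $T_i\in\mathcal{D}^{\prime *}(f^{-1}(U))$. Unraveling the definitions of $f_*$ (smooth pushout, orientations needed to define pushforwards of currents) and $j^*$ (restriction) yields
\begin{displaymath}
(j^*f_*S)|_{V\cap U}=\sum_{i=1}^m v_i|_{V\cap U}\otimes(f_{U*}T_i)|_{V\cap U},
\end{displaymath}
while the analogous computation for $f_{V*}j'^*S$ gives
\begin{displaymath}
(f_{V*}j'^*S)|_{V\cap U}=\sum_{i=1}^m v_i|_{V\cap U}\otimes f_{V\cap U*}(T_i|_{f^{-1}(V\cap U)}).
\end{displaymath}
The remaining identity $f_{V\cap U*}(T_i|_{f^{-1}(V\cap U)})=(f_{U*}T_i)|_{V\cap U}$ is again the classical (scalar) compatibility of pushforward of currents with restriction to open subsets. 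Independence of the chosen $\mathcal{V}$-constant cover and of the chosen bases was already built into the definitions in Section~3.2, so pasting over $U$ completes the argument.

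The only step that is not purely formal is the two classical identities between forms/currents mentioned above, but both are standard and the author has already used their holomorphic counterparts silently in Proposition~\ref{com1}. So the main (mild) obstacle is simply to set up notation carefully enough that the $\mathbb{R}$- or $\mathbb{C}$-valued local identities can be invoked component-by-component; there is no new geometric input beyond what Section~3.1 provided.
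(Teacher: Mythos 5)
Your proposal is correct and is essentially the paper's own argument: the paper does not write out a separate proof of Proposition~\ref{com2} but simply asserts (in Section~3.2) that all results of Section~3.1, including Proposition~\ref{com1}, carry over under the dictionary replacing $\mathcal{E}$, $\mathcal{E}$-free, $f^*\mathcal{E}$ by $\mathcal{V}$, $\mathcal{V}$-constant, $f^{-1}\mathcal{V}$, with orientations imposed wherever currents appear. Your component-by-component computation over a $\mathcal{V}$-constant cover, reducing to the two classical scalar identities $f_U^*\tilde{\alpha}_i=\widetilde{f_{V\cap U}^*\alpha_i}$ and $f_{V\cap U*}(T_i|_{f^{-1}(V\cap U)})=(f_{U*}T_i)|_{V\cap U}$, is exactly that transposition.
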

Assume that $f:Y\rightarrow X$ is a smooth map between oriented smooth manifolds. Let $\mathcal{V}$ and $\mathcal{W}$ be local systems of $\mathbb{R}$-modules of finite ranks on $X$. For a $\mathcal{W}$-valued form $\omega$ on $X$ and a $f^{-1}\mathcal{V}$-valued current $T$ on $Y$ satisfying that $f|_{\textrm{supp}T}$ is proper, we have the \emph{projection formula}
\begin{equation}\label{pro-formula3}
f_*(T\wedge f^*\omega)=f_*T\wedge\omega.
\end{equation}
Hence
\begin{equation}\label{pro-formula4}
f_*(\varphi\cup f^*\eta)=f_*\varphi\cup\eta
\end{equation}
for $\varphi\in H_c^{*}(Y,f^{-1}\mathcal{V})$, $\eta\in H^{*}(X,\mathcal{W})$. Moreover, if $f$ is proper, they also hold for  $\varphi\in H^{*}(Y,f^{-1}\mathcal{V})$, $\eta\in H^{*}(X,\mathcal{W})$, or $\varphi\in H_c^{*}(Y,f^{-1}\mathcal{V})$, $\eta\in H_c^{*}(X,\mathcal{W})$, or $\varphi\in H^{*}(Y,f^{-1}\mathcal{V})$, $\eta\in H_c^{*}(X,\mathcal{W})$.

Suppose that $X$ is a connected oriented smooth manifold with dimension $n$ and $\mathcal{V}$ is a local system of $\mathbb{R}$-modules of rank $m$. Denote by $\mathcal{V}^{\vee}=\mathcal{H}om_{\underline{\mathbb{R}}_X}(\mathcal{V},\underline{\mathbb{R}}_X)$ the dual of $\mathcal{V}$.
Choose  a $\mathcal{V}$-constant covering $\mathfrak{U}$ of $X$. For $U\in\mathfrak{U}$, assume that $e^U_1$, $\ldots$, $e^U_m$ and $f^U_1$, $\ldots$, $f^U_m$ are bases of $\Gamma(U,\mathcal{V})$ and $\Gamma(U, \mathcal{V}^{\vee})$, respectively. For $\Omega\in\Gamma_c(X,\mathcal{V}\otimes\mathcal{V}^{\vee}\otimes\mathcal{A}_X^n)$, the restriction of $\Omega$ on $U$
\begin{displaymath}
\Omega|_U=\sum_{1\leq i\leq m}\sum_{1\leq j\leq m} e^U_i\otimes f^U_j\otimes\Omega_{ij},
\end{displaymath}
where $\Omega_{ij}$ is a smooth $n$-form on $U$ for $1\leq i,j\leq m$. Then
\begin{displaymath}
\sum_{1\leq i\leq m}\sum_{1\leq j\leq m} \langle e^U_i, f^U_j\rangle\Omega_{ij}
\end{displaymath}
is a smooth $n$-form on $U$, where $\langle,\rangle$ is the contraction between $\mathcal{V}$ and $\mathcal{V}^{\vee}$. Immediately, we construct a smooth $n$-form on $X$, denoted by $\textrm{tr}\Omega$, which does not depend on the choice of open coverings. Since
%\begin{displaymath}
%\{x\in X|(\textrm{tr}\Omega)_x\neq0\}\subseteq\bigcup_{U\in\mathfrak{U}}\bigcup_{i,j=1}^m\{x\in U|(\Omega_{ij})_x\neq0\}=\{x\in X|(\Omega)_x\neq0\},
%\end{displaymath}
 $\textrm{supp}\textrm{(}\textrm{tr}\Omega\textrm{)}\subseteq \textrm{supp}\Omega$,  $\textrm{tr}$ gives a \emph{trace map}
\begin{displaymath}
\Gamma_c(X,\mathcal{V}\otimes\mathcal{V}^{\vee}\otimes\mathcal{A}_X^n)\rightarrow \Gamma_c(X,\mathcal{A}_X^n).
\end{displaymath}

\subsection{Cohomology with compact vertical supports}
Assume that $\pi:E\rightarrow X$ is a smooth fiber bundle on a smooth manifold $X$. Set
\begin{displaymath}
cv=\{Z\subseteq E|Z \mbox{ is closed in }E\mbox{ and }\pi|_Z:Z\rightarrow X\mbox{ is proper}\}.
\end{displaymath}
An element in $cv$ is called \emph{a compact vertical support}. $Z\in cv$, if and only if, $\pi^{-1}(K)\cap Z$ is compact for any compact subset $K\subseteq X$. The set $cv$ has following properties:

$(i)$ $cv$ is a paracompactifying family of supports on $E$ (\cite{Br}, IV 5.3 $(b)$, 5.5).

$(ii)$ $\mathcal{A}_E^p$ is $cv$-soft (\cite{Br}, II 9.4, 9.16) and hence  $\Gamma_{cv}$-acyclic (\cite{Br}, II 9.11).

$(iii)$ If $E$ is an oriented manifold, then $\mathcal{D}_E^{\prime p}$ is $cv$-soft (\cite{Br}, II 9.4, 9.16) and hence  $\Gamma_{cv}$-acyclic (\cite{Br}, II 9.11).

Let $\mathcal{W}$ be a local system of $\mathbb{R}$-modules of finite rank on $E$. Then $0\rightarrow\mathcal{W}\rightarrow \mathcal{W}\otimes\mathcal{A}_E^\bullet$ is a $cv$-soft resolution of $\mathcal{W}$ on $E$. By \cite{Br}, II, 4.1, the \emph{compact vertical cohomology} can be computed by
\begin{displaymath}
H_{cv}^*(E,\mathcal{W})\cong H^*(\Gamma_{cv}(E,\mathcal{W}\otimes\mathcal{A}_{E}^{\bullet})).
\end{displaymath}

Let $i_x:E_x\rightarrow E$ be the inclusion of the fiber $E_x$ over $x$ into $E$. Set $\omega\in \Gamma_{cv}(E,\mathcal{W}\otimes\mathcal{A}_{E}^*)$. By Lemma \ref{pull-push-support}, $\textrm{supp}(i_x^*\omega)\subseteq E_x\cap \textrm{supp}\omega$, hence is compact . The restrictions give a morphism
\begin{displaymath}
i_x^*:\Gamma_{cv}(E,\mathcal{W}\otimes\mathcal{A}_{E}^*)\rightarrow\Gamma_{c}(E_x,i_x^{-1}\mathcal{W}\otimes\mathcal{A}_{E_x}^*),
\end{displaymath}
which induces a morphism
\begin{displaymath}
H_{cv}^*(E,\mathcal{W})\rightarrow H_{c}^*(E_x,i_x^{-1}\mathcal{W}).
\end{displaymath}
If $E$ is an oriented manifold, the natural morphism $\mathcal{A}_E^\bullet\hookrightarrow\mathcal{D}_E^{\prime\bullet}$ of complexes of sheaves induces an isomorphism $H^*(\Gamma_{cv}(E,\mathcal{W}\otimes\mathcal{A}_{E}^{\bullet}))\cong H^*(\Gamma_{cv}(E,\mathcal{W}\otimes\mathcal{D}_{E}^{\prime\bullet}))$.

Let $X\times Y$ be viewed as a trivial smooth fiber bundle over $X$ and $\mathcal{W}$ a local system of $\mathbb{R}$-modules of finite rank on $X\times Y$.
\begin{lem}\label{cv-support}
Suppose that $V$ is an open set in $Y$ and $\omega$ is in $\mathcal{A}^*_{cv}(X\times V,\mathcal{W})$, where $X\times V$ is viewed as a smooth fiber bundle over $X$. Then $\emph{supp}\omega$ is closed in $X\times Y$.
\end{lem}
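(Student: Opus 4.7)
The plan is to show that $\textrm{supp}\,\omega$, which by definition is closed in $X\times V$, actually contains all its limit points in the larger space $X\times Y$. Let $(x_0,y_0)$ lie in the closure of $\textrm{supp}\,\omega$ taken in $X\times Y$. Since smooth manifolds are first-countable, I may choose a sequence $(x_n,y_n)\in\textrm{supp}\,\omega$ with $(x_n,y_n)\to(x_0,y_0)$, and the goal is to show $(x_0,y_0)\in\textrm{supp}\,\omega$. The argument splits according to whether $y_0\in V$ or not.

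If $y_0\in V$, then $(x_0,y_0)$ lies in the open subset $X\times V$ of $X\times Y$. Every open neighborhood of $(x_0,y_0)$ in $X\times V$ is also open in $X\times Y$, so it meets $\textrm{supp}\,\omega$; hence $(x_0,y_0)$ is in the closure of $\textrm{supp}\,\omega$ taken inside $X\times V$. But $\textrm{supp}\,\omega$ is closed in $X\times V$ (it is the support of a section defined there), so $(x_0,y_0)\in\textrm{supp}\,\omega$. This case only unwinds the definition.

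The essential case is $y_0\notin V$, and here the hypothesis on the compact vertical support is used decisively. Let $\pi\colon X\times V\to X$ be the projection; by assumption $\pi|_{\textrm{supp}\,\omega}$ is proper. Choose a compact neighborhood $K$ of $x_0$ in $X$. Then $\pi^{-1}(K)\cap\textrm{supp}\,\omega$ is compact in $X\times V$. For $n$ sufficiently large $x_n\in K$, so $(x_n,y_n)$ eventually lies in this compact set, and admits a convergent subsequence whose limit belongs to $X\times V$. But every subsequence of $(x_n,y_n)$ still converges to $(x_0,y_0)$, and $X\times Y$ is Hausdorff, so $(x_0,y_0)\in X\times V$, contradicting $y_0\notin V$. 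Thus no limit point can lie outside $X\times V$, and combined with the first case we conclude $\textrm{supp}\,\omega$ is closed in $X\times Y$.

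I do not anticipate an obstacle: the only subtle step is extracting a convergent subsequence from $(x_n,y_n)$ inside the compact set $\pi^{-1}(K)\cap\textrm{supp}\,\omega$ and using Hausdorffness to force its limit to coincide with $(x_0,y_0)$. This is a routine application of properness plus elementary point-set topology on manifolds, and the content of the lemma is really just a clean way to package the fact that compact-vertical sections of $\mathcal{A}^*_E\otimes\mathcal{W}$ extend by zero across points of the base $Y$ not in $V$.
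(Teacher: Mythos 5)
Your proof is correct, and it rests on exactly the same mechanism as the paper's: the compact vertical support condition makes $\mathrm{supp}\,\omega$ compact over a compact piece of the base $X$, and compact subsets of the Hausdorff space $X\times Y$ are closed there. The only difference is packaging — the paper covers $X$ by relatively compact opens $B_\alpha$ and exhibits the complement of $\mathrm{supp}\,\omega$ as a union of open sets, whereas you run a sequential limit-point argument with a compact neighborhood of $x_0$ (legitimate, since manifolds are first-countable and locally compact); your case split on $y_0\in V$ versus $y_0\notin V$ is in fact unnecessary, as the second argument already places the limit inside $\pi^{-1}(K)\cap\mathrm{supp}\,\omega\subseteq\mathrm{supp}\,\omega$ in both cases.
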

\begin{proof}
Suppose that  $\{B_\alpha\}$ is an open covering of $X$ such that $\overline{B}_\alpha$ are compact for all $\alpha$. Since $\omega$ has a compact vertical support, $\textrm{supp}\omega\cap(\overline{B}_\alpha\times V)$ is compact. Then
\begin{displaymath}
\begin{aligned}
B_\alpha\times Y-\textrm{supp}\omega=&B_\alpha\times Y-(B_\alpha\times V)\cap\textrm{supp}\omega\\
=&\left[\overline{B}_\alpha\times Y-(\overline{B}_\alpha\times V)\cap\textrm{supp}\omega\right]\cap(B_\alpha\times Y)
\end{aligned}
\end{displaymath}
is open in $B_\alpha\times Y$. So
\begin{displaymath}
X\times Y-\textrm{supp}\omega=\bigcup_\alpha(B_\alpha\times Y-\textrm{supp}\omega)
\end{displaymath}
is open in $X\times Y$. We complete the proof.
\end{proof}
For open subsets $V\subseteq U$ of $Y$, denote by $j_{VU}:X\times V\rightarrow X\times U$ the inclusion. By Lemma \ref{cv-support}, any $\omega\in\mathcal{A}^*_{cv}(X\times V,\mathcal{W})$ can be extended by zero on $(X\times V)\cup(X\times U-\textrm{supp}\omega)=X\times U$, denoted by $j_{VU*}\omega$. Since $\textrm{supp}(j_{VU*}\omega)=\textrm{supp}\omega$,  $j_{VU*}$ gives an operator
\begin{equation}\label{cv}
\mathcal{A}^*_{cv}(X\times V,\mathcal{W})\rightarrow\mathcal{A}^*_{cv}(X\times U,\mathcal{W}).
\end{equation}

Let $\pi:E\rightarrow X$ be an \emph{oriented} smooth \emph{fiber bundle} on a $($\emph{not necessarily orientable}$)$ smooth manifold $X$ and $r=\textrm{dim}E-\textrm{dim}X$. For $\omega\in \mathcal{A}_{cv}^p(E)$, denote by $\pi_*\omega$ the integral along fibers, seeing \cite{M1}, Sec. 2.4. 
Suppose that $\mathcal{V}$ is a local system of $\mathbb{R}$-modules of rank $m$ on $X$ and $\Omega\in \Gamma_{cv}(E, \pi^{-1}\mathcal{V}\otimes\mathcal{A}_E^p)$. Let $U$ be a $\mathcal{V}$-constant open subset of $X$ and $v_1$, $\ldots$, $v_m$ a basis of $\Gamma(U,\mathcal{V})$. By Lemma \ref{support2}, there exist $\alpha_1$, $\ldots$, $\alpha_m\in\mathcal{A}_{cv}^p(E_U)$ such that
\begin{displaymath}
\Omega|_{E_U}=\sum_{i=1}^m\pi_U^*v_i\otimes\alpha_i,
\end{displaymath}
where  $E_U=\pi^{-1}(U)$. For all such $U$, the $\mathcal{V}$-valued $(p-r)$-forms
\begin{displaymath}
\sum_{i=1}^mv_i\otimes\pi_{U*}\alpha_i
\end{displaymath}
can be glued as a global section on $X$, denoted by $\pi_*\Omega$. The definition here is similar to that of the pushout (seeing Section 3.1.3 (5)). Notice that the pushouts are defined on currents, hence it is necessary that the target manifolds are oriented. $\pi_*$ gives a morphism
\begin{displaymath}
\mathcal{A}_{cv}^*(E,\pi^{-1}\mathcal{V})\rightarrow\mathcal{A}^{*-r}(X,\mathcal{V}).
\end{displaymath}
Since $\textrm{supp}(\pi_*\Omega)\subseteq\pi_*(\textrm{supp}\Omega)$, the restriction of $\pi_*$ to $\mathcal{A}_{c}^*(E,\pi^{-1}\mathcal{V})\subseteq\mathcal{A}_{cv}^*(E,\pi^{-1}\mathcal{V})$ gives a morphism
\begin{displaymath}
\mathcal{A}_{c}^*(E,\pi^{-1}\mathcal{V})\rightarrow\mathcal{A}_c^{*-r}(X,\mathcal{V}).
\end{displaymath}
If $\mathcal{U}$ is a local system of $\mathbb{R}$-modules of finite rank on $X$ and $\omega\in \Gamma(X,\mathcal{U}\otimes\mathcal{A}_X^q)$, we have the projection formula
\begin{equation}\label{pro-formula5}
\pi_*(\Omega\wedge\pi^*\omega)=\pi_*\Omega\wedge\omega,
\end{equation}
which can be checked locally by  \cite{BT}, Prop. 6.15.

On the level of cohomology, $\pi_*$ induces
$H_{cv}^*(E,\pi^{-1}\mathcal{V})\rightarrow H^{*-r}(X,\mathcal{V})$ and $H_{c}^*(E,\pi^{-1}\mathcal{V})\rightarrow H_c^{*-r}(X,\mathcal{V})$,
since $\pi_*\textrm{d}=\textrm{d}\pi_*$.

Moreover, assume that $E$ is an \emph{oriented} smooth \emph{vector bundle} over an \emph{oriented} smooth manifold $X$. Let $i:X\rightarrow E$ be the inclusion of the zero section. For $T\in \Gamma(X, i^{-1}\mathcal{W}\otimes\mathcal{D}_X^{\prime*})$, $i_*T\in \Gamma_{cv}(E,\mathcal{W}\otimes\mathcal{D}_E^{\prime*+r})$ by Lemma \ref{pull-push-support}. So $i_*$ induce a morphism $i_*:H^*(X, i^{-1}\mathcal{W})\rightarrow H_{cv}^{*+r}(E,\mathcal{W})$.

\subsection{Generalizations of R. O. Wells' results}
In \cite{W}, R. O. Wells  compared de Rham and Dolbeault cohomology for proper surjective maps. We simply extend his results to more general cases.

\cite{W}, Thm. 3.3 was generalized  as follows.
\begin{prop}\label{i-s}
\textsc{}Let $f:Y\rightarrow X$ be a proper surjective smooth map of connected oriented  smooth manifolds with the same dimensions and $\emph{deg}f\neq 0$. If $\mathcal{V}$ is a local system of $\mathbb{R}$ or $\mathbb{C}$-modules of finite rank on $X$, then, for any $p$, $f^*:H^p(X,\mathcal{V})\rightarrow H^p(Y,f^{-1}\mathcal{V})$ is injective and $f_*:H^{p}(Y,f^{-1}\mathcal{V})\rightarrow H^{p}(X,\mathcal{V})$ is surjective. Moreover, they also hold for the cohomologies with compact supports.
\end{prop}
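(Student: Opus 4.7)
The plan is to establish the identity $f_{*}\circ f^{*}=(\deg f)\cdot\mathrm{id}$ already at the level of $\mathcal{V}$-valued currents, from which both conclusions are immediate since $\deg f\neq 0$. Note that the hypothesis $\dim Y=\dim X$ forces the shift $r$ appearing in the pushout to be $0$, so $f_{*}$ preserves form/current degree on both ordinary and compactly supported sections, and hence induces a map $H^{p}(Y,f^{-1}\mathcal{V})\to H^{p}(X,\mathcal{V})$ in every degree $p$.

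First I would view the constant function $1_{Y}$ as a global $0$-current on $Y$ (using the orientation of $Y$). Its support is all of $Y$, and since $f$ is proper, $f|_{\mathrm{supp}\,1_{Y}}=f$ is proper, so the pushout $f_{*}(1_{Y})$ is a well-defined $0$-current on $X$. Pairing against a compactly supported top-form $\mu$ on $X$ gives
\begin{equation*}
\langle f_{*}(1_{Y}),\mu\rangle=\langle 1_{Y},f^{*}\mu\rangle=\int_{Y}f^{*}\mu=(\deg f)\int_{X}\mu,
\end{equation*}
so $f_{*}(1_{Y})$ is the constant $0$-current $\deg f$ on $X$. Now apply the projection formula (\ref{pro-formula3}) with the trivial local system in the role of the current-coefficients, the given $\mathcal{V}$ in the role of the form-coefficients, $T=1_{Y}$, and $\omega\in\Gamma(X,\mathcal{V}\otimes\mathcal{A}_{X}^{p})$. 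Since $1_{Y}\wedge f^{*}\omega=f^{*}\omega$, the formula yields
\begin{equation*}
f_{*}(f^{*}\omega)=f_{*}(1_{Y})\wedge\omega=(\deg f)\cdot\omega
\end{equation*}
inside $\Gamma(X,\mathcal{V}\otimes\mathcal{D}_{X}^{\prime p})$. Via the quasi-isomorphism $\mathcal{V}\otimes\mathcal{A}_{X}^{\bullet}\hookrightarrow\mathcal{V}\otimes\mathcal{D}_{X}^{\prime\bullet}$, this identity descends to cohomology, giving $f_{*}\circ f^{*}=(\deg f)\cdot\mathrm{id}$ on $H^{p}(X,\mathcal{V})$; since $\deg f\neq 0$, it follows that $f^{*}$ is injective and $f_{*}$ is surjective.

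For compact supports the same reasoning applies verbatim: Lemma~\ref{pull-push-support}(1) and properness of $f$ ensure that $f^{*}$ sends $\Gamma_{c}(X,\mathcal{V}\otimes\mathcal{A}_{X}^{*})$ into $\Gamma_{c}(Y,f^{-1}\mathcal{V}\otimes\mathcal{A}_{Y}^{*})$, and the same current-level identity then holds in $\Gamma_{c}(X,\mathcal{V}\otimes\mathcal{D}_{X}^{\prime*})$, so $f^{*}$ is injective and $f_{*}$ surjective on $H_{c}^{p}$ as well. The $\mathbb{C}$-coefficient case is identical. The only genuine point of the argument is the identification $f_{*}(1_{Y})=\deg f$: this is essentially the definition of the degree of a proper smooth map between equal-dimensional oriented manifolds, but one must verify that the sheaf-theoretic pushout constructed in Section~3.2 restricts on the trivial local system to the standard current-theoretic pushout, which is immediate from the local formula (\ref{pushout}). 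Everything else reduces to the projection formula already established.
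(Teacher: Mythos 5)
Your proposal is correct and follows essentially the same route as the paper: both arguments rest on the identity $f_{*}1_{Y}=\deg f\cdot 1_{X}$ combined with the projection formula to obtain $f_{*}f^{*}=\deg f\cdot\mathrm{id}$, whence injectivity of $f^{*}$ and surjectivity of $f_{*}$ follow since $\deg f\neq 0$. The only (harmless) difference is that you verify the identity at the level of currents via (\ref{pro-formula3}) and then descend, while the paper invokes the cohomology-level projection formula (\ref{pro-formula4}) directly.
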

\begin{proof}
Notice that $f_*1_{Y}=\textrm{deg} f\cdot1_X\neq0$ in $H^0(X,\mathbb{R})$, where $1_X$ and $1_{Y}$ are classes of the constant $1$ in $H^0(X,\mathbb{R})$ and $H^0(Y,\mathbb{R})$, respectively.
By the projection formula (\ref{pro-formula4}), $f_*f^*\eta=\textrm{deg} f\cdot \eta$, where $\eta\in H^{p}(X,\mathcal{V})$ or $H_{c}^{p}(X,\mathcal{V})$. We get the proposition immediately.
\end{proof}

Recall that a complex manifold $X$ is called \emph{$p$-K\"ahlerian}, if it admits a closed strictly positive $(p,p)$-form $\Omega$ $($\cite{AB}, Def. 1.1, 1.2$)$. In such case, $\Omega|_{Z}$ is a volume form on $Z$, for any complex submanifold $Z$ of pure dimension $p$ of $X$. Any complex manifold is $0$-K\"ahlerian and any K\"ahler manifold $X$ is  $p$-K\"ahlerian for every $p\leq \textrm{dim}_{\mathbb{C}}X$. We generalize  \cite{W}, Thm. 3.1 and 4.1 as follows.

\begin{prop}\label{inj-surj}
Suppose that $f:Y\rightarrow X$ is a proper surjective holomorphic map between connected complex manifolds and $Y$ is $r$-K\"ahlerian, where $r=\emph{dim}_{\mathbb{C}}Y-\emph{dim}_{\mathbb{C}}X$. Let $\mathcal{V}$ be a local system of $\mathbb{R}$ or $\mathbb{C}$-modules of finite rank and  $\mathcal{E}$ a locally free sheaf of  $\mathcal{O}_X$-modules of finite rank on $X$, respectively.  Then, for any $p$, $q$,

$(1)$ $f^*:H^p(X,\mathcal{V})\rightarrow H^p(Y,f^{-1}\mathcal{V})$ and $f^*:H^{p,q}(X,\mathcal{E})\rightarrow H^{p,q}(Y,f^*\mathcal{E})$ are injective,

$(2)$ $f_*:H^{p}(Y,f^{-1}\mathcal{V})\rightarrow H^{p-2r}(X,\mathcal{V})$ and $f_*:H^{p,q}(Y,f^*\mathcal{E})\rightarrow H^{p-r,q-r}(X,\mathcal{E})$ are surjective.

Moreover, they also hold for the cohomologies with compact supports.
\end{prop}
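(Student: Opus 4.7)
\medskip

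\noindent\textbf{Proof proposal.} The plan is to adapt the projection-formula trick from the proof of Proposition~\ref{i-s}, replacing the trivial class $1_Y$ by a class built from the $r$-K\"ahler form on $Y$. By hypothesis there is a closed strictly positive $(r,r)$-form $\Omega$ on $Y$; viewed alternately as a real $2r$-form and as an $\mathcal{O}_Y$-valued $(r,r)$-form, it represents both a de Rham class $[\Omega]\in H^{2r}(Y,\underline{\mathbb{R}}_Y)$ and a Dolbeault class $[\Omega]\in H^{r,r}(Y,\mathcal{O}_Y)$, since $d\Omega=0$ forces $\bar\partial\Omega=0$. The entire argument will pivot on the scalar
\begin{equation*}
c:=f_*[\Omega],
\end{equation*}
which I will show is a nonzero real constant, both in $H^0(X,\underline{\mathbb{R}}_X)=\mathbb{R}$ and as an element of $H^{0,0}(X)$.

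The crucial step, and the main obstacle, is the positivity $c>0$. From $d\Omega=0$ one gets $d(f_*\Omega)=f_*(d\Omega)=0$, so $f_*\Omega$ is a $d$-closed $0$-current on $X$, hence a locally constant distribution; connectedness of $X$ then promotes it to a single real constant. To evaluate this constant, I invoke generic smoothness for the proper surjective holomorphic map $f$ to pick a regular value $x_0\in X$. The fiber $f^{-1}(x_0)$ is then a compact complex submanifold of $Y$ of pure dimension $r$, and the characterization of strictly positive $(r,r)$-forms recalled just before the proposition makes $\Omega|_{f^{-1}(x_0)}$ a volume form. On any neighborhood $W\ni x_0$ over which $f$ is a submersion, the currential pushforward $f_*\Omega|_W$ agrees with the smooth fiber integral $x\mapsto\int_{f^{-1}(x)}\Omega$, so $c=\int_{f^{-1}(x_0)}\Omega>0$. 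The two delicate points here are (i) matching the currential $f_*\Omega$ of Section~3.1.3 with classical fiber integration on the submersion locus, and (ii) upgrading a $d$-closed $0$-current on connected $X$ from locally to globally constant; both are standard but deserve care, because $f$ need not be a submersion globally and $f_*$ is defined purely in the currential language.

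With $c\neq 0$ in hand, I apply the projection formulas $(\ref{pro-formula2})$ and $(\ref{pro-formula4})$ to $\varphi=[\Omega]$ (viewed in $H^*(Y,f^*\mathcal{O}_X)=H^*(Y)$ or in $H^*(Y,f^{-1}\underline{\mathbb{R}}_X)$) and an arbitrary class $\eta$ on $X$, obtaining
\begin{equation*}
f_*\bigl([\Omega]\cup f^*\eta\bigr)=f_*[\Omega]\cup\eta=c\,\eta.
\end{equation*}
Injectivity of $f^*$ is then immediate: $f^*\eta=0$ forces $c\eta=0$, hence $\eta=0$. Surjectivity of $f_*$ is witnessed by the explicit preimage $c^{-1}[\Omega]\cup f^*\eta$. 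The four assertions in (1) and (2) arise by reading this identity in the appropriate Dolbeault or de Rham cohomology. For the compact-support statements, properness of $f$ makes $f^*\eta$ compactly supported whenever $\eta$ is, so $[\Omega]\cup f^*\eta$ lies in compactly supported cohomology; the projection formulas extend to proper $f$ with compactly supported $\eta$, as stated right after $(\ref{pro-formula2})$ and $(\ref{pro-formula4})$, and the same identity closes the argument.
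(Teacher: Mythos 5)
Your proposal is correct and follows essentially the same route as the paper: take the strictly positive closed $(r,r)$-form $\Omega$, observe that $f_*\Omega$ is a closed $0$-current hence a constant $c$, evaluate $c=\int_{f^{-1}(x_0)}\Omega>0$ at a regular value supplied by Sard's theorem, and conclude via the projection formulas $(\ref{pro-formula2})$ and $(\ref{pro-formula4})$ that $f_*([\Omega]\cup f^*\eta)=c\,\eta$. The extra care you flag about matching the currential pushforward with fiber integration on the submersion locus is a reasonable elaboration of a step the paper leaves implicit.
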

\begin{proof}
Let $\Omega$ be a strictly positive closed $(r,r)$-form on $Y$.  Then $c=f_*\Omega$ is a closed  current of degree $0$, hence a constant. By Sard's theorem, the set $X_0$ of regular values of $f$ is nonempty. For any $x\in X_0$, $Y_x=f^{-1}(x)$ is a compact complex submanifold of pure dimension $r$, so $c=\int_{Y_x}\Omega|_{Y_x}>0$. By the projection formula (\ref{pro-formula4}) (resp. (\ref{pro-formula2})), $f_*([\Omega]\cup f^*\eta)=c\cdot \eta$, where $[\Omega]\in H^{2r}(Y,\mathbb{R})$ or $H^{2r}(Y,\mathbb{C})$ (resp. $H^{r,r}(Y)$) and $\eta\in H^{p}(X,\mathcal{V})$ (resp. $H^{p,q}(X,\mathcal{E})$) or $H_{c}^{p}(X,\mathcal{V})$ (resp. $H_c^{p,q}(X,\mathcal{E})$). It is easily to deduce the proposition.
\end{proof}

\section{Examples}
\subsection{$\mathcal{H}_X^{p,*}(\mathcal{E})$ and $\mathcal{H}_{X,c}^{p,*}(\mathcal{E})$}
Let $X$ be a connected complex manifold and $\mathcal{E}$  a locally free sheaf of $\mathcal{O}_X$-modules of finite rank on $X$. For open sets $V\subseteq U$, set $j_{VU}:V\rightarrow U$ the inclusion. Define $\mathcal{H}_X^{p,q}(\mathcal{E})$ as
\begin{displaymath}
\mathcal{H}_X^{p,q}(\mathcal{E})(U)=H^{p,q}(U,\mathcal{E})
\end{displaymath}
 for any open set $U$ in $X$, and the restriction
\begin{displaymath}
\rho_{U,V}=j_{VU}^*:\mathcal{H}_X^{p,q}(\mathcal{E})(U)\rightarrow\mathcal{H}_X^{p,q}(\mathcal{E})(V)
\end{displaymath}
for open sets $V\subseteq U$. Define  $\mathcal{H}_{X,c}^{p,q}(\mathcal{E})$ as
\begin{displaymath}
\mathcal{H}_{X,c}^{p,q}(\mathcal{E})(U)=H_c^{p,q}(U,\mathcal{E})
\end{displaymath}
 for any open set $U$ in $X$, and the extension
\begin{displaymath}
i_{V,U}=j_{VU*}:\mathcal{H}_{X,c}^{p,q}(\mathcal{E})(V)\rightarrow\mathcal{H}_{X,c}^{p,q}(\mathcal{E})(U)
\end{displaymath}
for open sets $V\subseteq U$.  It is easily to check that $\mathcal{H}_X^{p,q}(\mathcal{E})$ and $\mathcal{H}_{X,c}^{p,q}(\mathcal{E})$ are a cdp presheaf  and a cds precosheaf of $\mathbb{C}$-modules on $X$, respectively.

Let $\mathcal{F}$ be a locally free sheaf of $\mathcal{O}_X$-modules of finite rank on $X$ and $\Omega$ an element in $H^{s,t}(X,\mathcal{F})$. Assume that the $\mathcal{E}$-valued $(s,t)$-form $\omega$ is a representative of $\Omega$. For convenience, denote $\mathcal{F}^{p,q}=\mathcal{E}\otimes\mathcal{A}_X^{p,q}$ and $\mathcal{G}^{p,q}=\mathcal{E}\otimes\mathcal{F}\otimes\mathcal{A}_X^{p+s,q+t}$, for any $p$, $q$. Given $p$, for open subsets $U$ and $V$  of $X$, there exist two exact sequences of complexes
\begin{displaymath}
\small{\xymatrix{
   0\ar[r] &\Gamma(U\cup V,\mathcal{F}^{p,\bullet})\ar[d]^{\omega|_{U\cup V}\wedge} \ar[r]^{(j_1^*,j_2^*)\quad\quad} &\Gamma(U,\mathcal{F}^{p,\bullet})\oplus \Gamma(V,\mathcal{F}^{p,\bullet})\ar[d]^{(\omega|_{U}\wedge,\omega|_{V}\wedge)}\ar[r]^{\quad\quad j_3^*-j_4^*}&\Gamma(U\cap V,\mathcal{F}^{p,\bullet})\ar[d]^{\omega|_{U\cap V}\wedge}\ar[r] &0\\
 0\ar[r] & \Gamma(U\cup V,\mathcal{G}^{p,\bullet})     \ar[r]^{(j_1^*,j_2^*)\quad\quad} & \Gamma(U,\mathcal{G}^{p,\bullet})\oplus \Gamma(V,\mathcal{G}^{p,\bullet})\ar[r]^{\quad\quad j_3^*-j_4^*}&\Gamma(U\cap V,\mathcal{G}^{p,\bullet})\ar[r] &0 }}
\end{displaymath}
and
\begin{displaymath}
\small{\xymatrix{
   0\ar[r] &\Gamma_c(U\cap V,\mathcal{F}^{p,\bullet})\ar[d]^{\omega|_{U\cap V}\wedge} \ar[r]^{(j_{3*},j_{4*})\quad\quad} &\Gamma_c(U,\mathcal{F}^{p,\bullet})\oplus \Gamma_c(V,\mathcal{F}^{p,\bullet})\ar[d]^{(\omega|_{U}\wedge,\omega|_{V}\wedge)}\ar[r]^{\quad\quad j_{1*}-j_{2*}}&\Gamma_c(U\cup V,\mathcal{F}^{p,\bullet})\ar[d]^{\omega|_{U\cup V}\wedge}\ar[r] &0\\
 0\ar[r] & \Gamma_c(U\cap V,\mathcal{G}^{p,\bullet})     \ar[r]^{(j_{3*},j_{4*})\quad\quad} & \Gamma_c(U,\mathcal{G}^{p,\bullet})\oplus \Gamma_c(V,\mathcal{G}^{p,\bullet})\ar[r]^{\quad\quad j_{1*}-j_{2*}}&\Gamma_c(U\cup V,\mathcal{G}^{p,\bullet})\ar[r] &0,}}
\end{displaymath}
where $j_i$ are self-explanatory inclusions and $j_i^*$, $j_{i*}$ are restrictions, extensions by zero, respectively, for $i=1,2,3,4$. For the exactness, we refer to \cite{BT}, Prop. 2.3, 2.7. They induce two commutative diagrams of long exact sequences on the level of cohomology
\begin{displaymath}
\tiny{\xymatrix{
   \ar[r]^{\delta^{p,q-1}_{\mathcal{E}}\quad\qquad} & H^{p,q}(U\cup V,\mathcal{E})\ar[d]^{\Omega|_{U\cup V}\cup} \ar[r]^{(j_1^*,j_2^*)\quad\quad} &H^{p,q}(U,\mathcal{E})\oplus H^{p,q}(V,\mathcal{E})\ar[d]^{(\Omega|_{U}\cup,\Omega|_{V}\cup)}\ar[r]^{\quad\quad j_3^*-j_4^*}&H^{p,q}(U\cap V,\mathcal{E})\ar[d]^{\Omega|_{U\cup V}\cup}\ar[r]^{\quad\qquad\delta^{p,q}_{\mathcal{E}}} &\cdots\\
  \ar[r]^{\delta^{p+s,q+t-1}_{\mathcal{E}\otimes\mathcal{F}}\quad\qquad\qquad} & H^{p+s,q+t}(U\cup V,\mathcal{E}\otimes\mathcal{F})\ar[r]^{(j_1^*,j_2^*)\qquad\qquad} &H^{p+s,q+t}(U,\mathcal{E}\otimes\mathcal{F})\oplus H^{p+s,q+t}(V,\mathcal{E}\otimes\mathcal{F})\ar[r]^{\qquad\qquad j_3^*-j_4^*}&H^{p+s,q+t}(U\cap V,\mathcal{E}\otimes\mathcal{F})\ar[r]^{\quad\qquad\qquad\delta^{p+s,q+t}_{\mathcal{E}\otimes\mathcal{F}}} &\cdots}}
\end{displaymath}
and
\begin{displaymath}
\tiny{\xymatrix{
   \ar[r]^{\delta^{p,q-1}_{\mathcal{E},c}\quad\qquad} & H_c^{p,q}(U\cap V,\mathcal{E})\ar[d]^{\Omega|_{U\cap V}\cup} \ar[r]^{(j_{3*},j_{4*})\quad\quad} &H_c^{p,q}(U,\mathcal{E})\oplus H_c^{p,q}(V,\mathcal{E})\ar[d]^{(\Omega|_{U}\cup,\Omega|_{V}\cup)}\ar[r]^{\quad\quad j_{1*}-j_{2*}}&H_c^{p,q}(U\cup V,\mathcal{E})\ar[d]^{\Omega|_{U\cup V}\cup}\ar[r]^{\quad\qquad\delta^{p,q}_{\mathcal{E},c}} &\cdots\\
  \ar[r]^{\delta^{p+s,q+t-1}_{\mathcal{E}\otimes\mathcal{F},c}\quad\qquad\qquad} & H_c^{p+s,q+t}(U\cap V,\mathcal{E}\otimes\mathcal{F})\ar[r]^{(j_{3*},j_{4*})\qquad\qquad} &H_c^{p+s,q+t}(U,\mathcal{E}\otimes\mathcal{F})\oplus H_c^{p+s,q+t}(V,\mathcal{E}\otimes\mathcal{F})\ar[r]^{\qquad\qquad\quad  j_{1*}-j_{2*}}&H_c^{p+s,q+t}(U\cup V,\mathcal{E}\otimes\mathcal{F})\ar[r]^{\quad\qquad\qquad\delta^{p+s,q+t}_{\mathcal{E}\otimes\mathcal{F},c}} &\cdots.}}
\end{displaymath}
Define
\begin{displaymath}
\mathcal{H}_X^{p,*}(\mathcal{E})=\{(\mathcal{H}_X^{p,q}(\mathcal{E}), \mbox{ }\delta^{p,q}_{\mathcal{E}})|q\in\mathbb{Z}\}\mbox{ and } \mathcal{H}_{X,c}^{p,*}(\mathcal{E})=\{(\mathcal{H}_{X,c}^{p,q}(\mathcal{E}),\mbox{ } \delta^{p,q}_{\mathcal{E},c})|q\in\mathbb{Z}\}.
\end{displaymath}
Then, via cup products, $\Omega$ gives M-V morphisms $\Omega\cup\bullet:\mathcal{H}_X^{p,*}(\mathcal{E})\rightarrow \mathcal{H}_X^{p+s,*+t}(\mathcal{E}\otimes\mathcal{F})$ and $\Omega\cup\bullet:\mathcal{H}_{X,c}^{p,*}(\mathcal{E})\rightarrow \mathcal{H}_{X,c}^{p+s,*+t}(\mathcal{E}\otimes\mathcal{F})$.

Let $f:Y\rightarrow X$ be a holomorphic map of connected complex manifolds and $r=\textrm{dim}_{\mathbb{C}}Y-\textrm{dim}_{\mathbb{C}}X$. For any open subset $U$ in $X$, denote  $\widetilde{U}=f^{-1}(U)$. %, and denote $\mathcal{F}^{p,q}=\mathcal{E}\otimes_{\mathcal{O}_X}\mathcal{A}_X^{p,q}$, $\mathcal{G}^{p,q}=f^*\mathcal{E}\otimes_{\mathcal{O}_Y}\mathcal{A}_Y^{p,q}$, $\mathcal{H}^{p,q}=\mathcal{E}\otimes_{\mathcal{O}_X}\mathcal{D}_X^{\prime p,q}$, for any $p$, $q$. For open subsets $U$, $V$  of $X$, there exists two commutative diagrams of exact sequences of complexes
By the similar way to above cases, we get two commutative diagrams of long exact sequences
\begin{displaymath}
\tiny{\xymatrix{
  \cdots H^{p,q}(U\cup V,\mathcal{E})\ar[d]^{f^*_{U\cup V}} \ar[r]^{(j_1^*,j_2^*)\quad\quad} &H^{p,q}(U,\mathcal{E})\oplus H^{p,q}(V,\mathcal{E})\ar[d]^{(f^*_{U},f^*_{V})}\ar[r]^{\quad\quad j_3^*-j_4^*}&H^{p,q}(U\cap V,\mathcal{E})\ar[d]^{f^*_{U\cap V}}\ar[r]^{\delta^{p,q}_{\mathcal{E}}\quad} &H^{p,q+1}(U\cup V,\mathcal{E})\ar[d]^{f^*_{U\cup V}}\cdots\\
 \cdots H^{p,q}(\widetilde{U}\cup \widetilde{V},f^*\mathcal{E})     \ar[r]^{(\tilde{j}_1^*,\tilde{j}_2^*)\quad\quad} & H^{p,q}(\widetilde{U},f^*\mathcal{E})\oplus H^{p,q}(\widetilde{V},,f^*\mathcal{E})\ar[r]^{\quad\qquad \tilde{j}_3^*-\tilde{j}_4^*}&H^{p,q}(\widetilde{U}\cap \widetilde{V},,f^*\mathcal{E})\ar[r]^{\delta^{p,q}_{f^*\mathcal{E}}\quad} &H^{p,q+1}(\widetilde{U}\cup \widetilde{V},f^*\mathcal{E})\cdots}}
\end{displaymath}
and
\begin{displaymath}
\tiny{\xymatrix{
   \cdots H_c^{p,q}(\widetilde{U}\cap \widetilde{V},f^*\mathcal{E})\ar[d]^{f_{U\cap V*}} \ar[r]^{(\tilde{j}_{3*},\tilde{j}_{4*})\quad\quad} &H_c^{p,q}(\widetilde{U},f^*\mathcal{E})\oplus H_c^{p,q}(\widetilde{V},\mathcal{E})\ar[d]^{(f_{U*},f_{V*})}\ar[r]^{\quad\quad \tilde{j}_{1*}-\tilde{j}_{2*}}&H_c^{p,q}(\widetilde{U}\cup \widetilde{V},f^*\mathcal{E})\ar[d]^{f_{U\cup V*}}\ar[r]^{\delta^{p,q}_{f^*\mathcal{E},c}\quad} &H_c^{p,q+1}(\widetilde{U}\cap \widetilde{V},f^*\mathcal{E})\ar[d]^{f_{U\cap V*}}\cdots\\
 \cdots H_c^{p-r,q-r}(U\cap V,\mathcal{E})     \ar[r]^{(j_{3*},j_{4*})\quad\quad} & H_c^{p-r,q-r}(U,\mathcal{E})\oplus H_c^{p-r,q-r}(V,\mathcal{E})\ar[r]^{\quad\qquad \tilde{j}_{1*}-\tilde{j}_{2*}}&H_c^{p-r,q-r}(U\cup V,\mathcal{E})\ar[r]^{\delta^{p-r,q-r}_{\mathcal{E},c}\qquad} &H_c^{p-r,q-r+1}(U\cap V,\mathcal{E})\cdots,}}
\end{displaymath}
where $j_i$, $\tilde{j}_i$, $j_i^*$, $\tilde{j}_i^*$, $j_{i*}$, $\tilde{j}_{i*}$ are self-explanatory as above cases, for $i=1,2,3,4$. Hence, pullbacks and pushouts define  M-V morphisms $f^*:\mathcal{H}_X^{p,*}(\mathcal{E})\rightarrow f_*\mathcal{H}_Y^{p,*}(f^*\mathcal{E})$ and $f_*:f_*\mathcal{H}_{Y,c}^{p,*}(f^*\mathcal{E})\rightarrow \mathcal{H}_{X,c}^{p-r,*-r}(\mathcal{E})$ respectively.

Assume that $f$ is proper. It is similar to  prove that $f^*:\mathcal{H}_{X,c}^{p,*}(\mathcal{E})\rightarrow f_*\mathcal{H}_{Y,c}^{p,*}(f^*\mathcal{E})$  and $f_*:f_*\mathcal{H}_{Y}^{p,*}(f^*\mathcal{E})\rightarrow \mathcal{H}_{X}^{p-r,*-r}(\mathcal{E})$ are  M-V morphisms, where we use Proposition \ref{com1} to check the commutative diagrams on the level of forms and currents.

We summarize above results as follows.
\begin{prop}\label{example1}
Let $X$ be a connected complex manifold and $\mathcal{E}$  a locally free sheaf of $\mathcal{O}_X$-modules of finite rank on $X$. Fixed $p\in \mathbb{Z}$.

$(1)$ $\mathcal{H}_X^{p,*}(\mathcal{E})$ \emph{(resp.} $\mathcal{H}_{X,c}^{p,*}(\mathcal{E})$\emph{)} is a M-V system  of cdp presheaves \emph{(resp.} cds precosheaves\emph{)} of $\mathbb{C}$-modules on $X$.

$(2)$ Assume that $\mathcal{F}$ is a locally free sheaf of $\mathcal{O}_X$-modules of finite rank on $X$ and $\Omega$ is an element in $H^{s,t}(X,\mathcal{F})$. Then
\begin{displaymath}
\Omega\cup\bullet:\mathcal{H}_X^{p,*}(\mathcal{E})\rightarrow \mathcal{H}_X^{p+s,*}(\mathcal{E}\otimes\mathcal{F})[t]
\end{displaymath}
and
\begin{displaymath}
\Omega\cup\bullet:\mathcal{H}_{X,c}^{p,*}(\mathcal{E})\rightarrow \mathcal{H}_{X,c}^{p+s,*}(\mathcal{E}\otimes\mathcal{F})[t]
\end{displaymath}
are M-V morphisms.

$(3)$ Assume that $f:Y\rightarrow X$ is a holomorphic map of connected complex manifolds and $r=\emph{dim}_{\mathbb{C}}Y-\emph{dim}_{\mathbb{C}}X$. Then
\begin{displaymath}
f^*:\mathcal{H}_X^{p,*}(\mathcal{E})\rightarrow f_*\mathcal{H}_Y^{p,*}(f^*\mathcal{E})
\end{displaymath}
and
\begin{displaymath}
f_*:f_*\mathcal{H}_{Y,c}^{p,*}(f^*\mathcal{E})\rightarrow \mathcal{H}_{X,c}^{p-r,*}(\mathcal{E})[-r]
\end{displaymath}
are M-V morphisms.

Moreover, if $f$ is proper, then
\begin{displaymath}
f^*:\mathcal{H}_{X,c}^{p,*}(\mathcal{E})\rightarrow f_*\mathcal{H}_{Y,c}^{p,*}(f^*\mathcal{E})
\end{displaymath}
and
\begin{displaymath}
f_*:f_*\mathcal{H}_{Y}^{p,*}(f^*\mathcal{E})\rightarrow \mathcal{H}_{X}^{p-r,*}(\mathcal{E})[-r]
\end{displaymath}
are M-V morphisms.
\end{prop}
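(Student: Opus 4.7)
The plan is to construct everything at the level of (vector-valued) forms and currents and then pass to cohomology; essentially all the diagrams required for the verification appear in the paragraphs immediately preceding the proposition, and the proposition is a formal packaging of those constructions. I would organize the proof in the order (1), (2), (3) as stated, and in each case reduce the question to the behavior of $\mathcal{E}$-valued forms/currents on disjoint unions and on pairs of open sets.

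For part (1), I would first verify the countable direct product and sum conditions. Given a disjoint family $\{U_n\}$ of open subsets, the sheaf $\mathcal{E}\otimes\mathcal{A}_X^{p,q}$ satisfies $\Gamma(\bigsqcup_n U_n, \mathcal{E}\otimes\mathcal{A}_X^{p,q})\cong\prod_n \Gamma(U_n,\mathcal{E}\otimes\mathcal{A}_X^{p,q})$, with $\bar{\partial}$ acting componentwise; since cohomology of complexes commutes with direct products, $\mathcal{H}_X^{p,*}(\mathcal{E})$ is cdp. For compactly supported forms a section on $\bigsqcup_n U_n$ meets only finitely many components, so $\Gamma_c$ distributes as a direct sum and $\mathcal{H}_{X,c}^{p,*}(\mathcal{E})$ is cds. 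The M-V system property then follows from the two Mayer-Vietoris short exact sequences of complexes of $\mathcal{E}$-valued $(p,\bullet)$-forms (global and compactly supported), obtained by applying \cite{BT}, Prop. 2.3 and 2.7 componentwise on an $\mathcal{E}$-free open covering. Taking cohomology yields precisely the long exact sequences displayed above, and the connecting maps $\delta^{p,q}_{\mathcal{E}}$, $\delta^{p,q}_{\mathcal{E},c}$ play the role of the $\delta^p_{U,V}$ in the M-V axioms.

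For parts (2) and (3), the key observation is that each of the operations in question — wedging with a representative $\omega$ of $\Omega$, pullback $f^*$, and pushout $f_*$ — gives a chain map of Mayer-Vietoris short exact sequences of complexes. For wedging with a globally defined form, commutation with restrictions and with extensions by zero is immediate. For pullback and pushout, this is exactly the content of Proposition \ref{com1}: $j'_*f_V^*=f^*j_*$ on $\Gamma_c(V,\mathcal{E}\otimes\mathcal{A}_X^{*,*})$, and $f_{V*}j'^*=j^*f_*$ on $\Gamma(Y,f^*\mathcal{E}\otimes\mathcal{D}_Y^{\prime*,*})$. The degree shifts $[t]$ in part (2) and $[-r]$ in part (3) reflect the degree of $\omega$ and the relative dimension, respectively. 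Passing to the long exact sequences produces the commutative ladders required by the definition of a M-V morphism.

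The main technical obstacle is ensuring that each map lands in sections of the correct support type, so that the relevant MV short exact sequences exist in the first place. Lemma \ref{pullback-support} and Lemma \ref{pushout-support}, together with the properness hypotheses in the ``moreover'' cases of part (3), supply this: for non-proper $f$, one only gets $f^*:\mathcal{H}_X^{p,*}(\mathcal{E})\to f_*\mathcal{H}_Y^{p,*}(f^*\mathcal{E})$ on global cohomology and $f_*:f_*\mathcal{H}_{Y,c}^{p,*}(f^*\mathcal{E})\to\mathcal{H}_{X,c}^{p-r,*}(\mathcal{E})[-r]$ on compactly supported cohomology, while properness extends each to the opposite support condition. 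Once these support checks and the identities of Proposition \ref{com1} are in hand, the remaining verification is a routine diagram chase on the long exact sequences.
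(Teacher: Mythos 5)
Your proposal follows essentially the same route as the paper: the cdp/cds conditions are the routine check the paper leaves implicit, the M-V system structure comes from the two Mayer--Vietoris short exact sequences of $\mathcal{E}$-valued $(p,\bullet)$-forms via \cite{BT}, Prop.~2.3 and 2.7, and the morphism statements in (2) and (3) are obtained exactly as in the text by exhibiting chain maps of these short exact sequences, with Proposition~\ref{com1} supplying the commutativity needed for the proper cases and Lemmas~\ref{pullback-support} and \ref{pushout-support} controlling supports. The proof is correct and matches the paper's construction.
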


\subsection{$\mathcal{H}_X^*(\mathcal{V})$ and $\mathcal{H}_{X,c}^*(\mathcal{V})$}
Following Section 4.1, we can define $\mathcal{H}_X^*(\mathcal{V})$ and $\mathcal{H}_{X,c}^*(\mathcal{V})$ for any local system $\mathcal{V}$ of $\mathbb{R}$-modules of finite rank on a smooth manifold $X$. Analogue to  Propositon \ref{example1}, we get the following proposition.
\begin{prop}\label{example2}
Let $X$ be a connected smooth manifold and $\mathcal{V}$  a local system of $\mathbb{R}$-modules of finite rank on $X$.

$(1)$  $\mathcal{H}_X^*(\mathcal{V})$ \emph{(resp.} $\mathcal{H}_{X,c}^{p}(\mathcal{V})$\emph{)} is a M-V system of cdp presheaves \emph{(resp.} cds precosheaves\emph{)}.

$(2)$ Assume that $\mathcal{W}$ is a local system of $\mathbb{R}$-modules of finite rank on $X$ and $\Omega$ is an element in $H^s(X,\mathcal{W})$. Then
\begin{displaymath}
\Omega\cup\bullet:\mathcal{H}_X^{*}(\mathcal{V})\rightarrow \mathcal{H}_X^{*}(\mathcal{V}\otimes\mathcal{W})[s]
\end{displaymath}
and
\begin{displaymath}
\Omega\cup\bullet:\mathcal{H}_{X,c}^{*}(\mathcal{V})\rightarrow \mathcal{H}_{X,c}^{*}(\mathcal{V}\otimes\mathcal{W})[s]
\end{displaymath}
are M-V morphisms.

$(3)$ Assume that $f:Y\rightarrow X$ is a smooth map of connected smooth manifolds and $r=\emph{dim}Y-\emph{dim}X$.

$(i)$ $f^*:\mathcal{H}_X^{*}(\mathcal{V})\rightarrow f_*\mathcal{H}_Y^{*}(f^{-1}\mathcal{V})$ is a  M-V morphism. Moreover, if $f$ is proper, then $f^*:\mathcal{H}_{X,c}^{p,*}(\mathcal{V})\rightarrow f_*\mathcal{H}_{Y,c}^{p,*}(f^{-1}\mathcal{V})$   is a  M-V morphism.

$(ii)$ Suppose that $X$ and $Y$ are oriented. $f_*:f_*\mathcal{H}_{Y,c}^{*}(f^{-1}\mathcal{V})\rightarrow \mathcal{H}_{X,c}^{*}(\mathcal{V})[-r]$  is a M-V morphism. Moreover, if $f$ is proper, then $f_*:f_*\mathcal{H}_{Y}^{*}(f^{-1}{\mathcal{V}})\rightarrow \mathcal{H}_{X}^{*}(\mathcal{V})[-r]$ is a M-V morphism.
\end{prop}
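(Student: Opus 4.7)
The plan is to follow the blueprint of Proposition \ref{example1} line by line, replacing the holomorphic data $(\mathcal{O}_X, \mathcal{E}, \mathcal{A}_X^{*,*}, \mathcal{D}_X^{\prime *,*}, f^{*}\mathcal{E}, H^{*,*}(X,\mathcal{E}))$ by the smooth/local-system data $(\underline{\mathbb{R}}_X, \mathcal{V}, \mathcal{A}_X^{*}, \mathcal{D}_X^{\prime *}, f^{-1}\mathcal{V}, H^{*}(X,\mathcal{V}))$, exactly as prescribed at the start of Section 3.2. Everything is formally parallel; the only extra subtlety is that pushouts of currents require orientations, which is precisely the hypothesis appearing in (3)(ii).

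For (1), the cdp property of $\mathcal{H}_X^{*}(\mathcal{V})$ holds because for a disjoint family $\{U_n\}$ one has a product decomposition $\Gamma(\bigsqcup_n U_n, \mathcal{V}\otimes\mathcal{A}_X^{\bullet}) = \prod_n \Gamma(U_n, \mathcal{V}\otimes\mathcal{A}_X^{\bullet})$, and cohomology commutes with countable products of complexes of $\mathbb{R}$-modules; dually, $\Gamma_c$ of a disjoint union is the direct sum, yielding the cds property of $\mathcal{H}_{X,c}^{*}(\mathcal{V})$. For the M-V structure, I would produce the two short exact sequences of complexes
\begin{displaymath}
0 \to \Gamma(U\cup V, \mathcal{V}\otimes\mathcal{A}_X^{\bullet}) \to \Gamma(U, \mathcal{V}\otimes\mathcal{A}_X^{\bullet}) \oplus \Gamma(V, \mathcal{V}\otimes\mathcal{A}_X^{\bullet}) \to \Gamma(U\cap V, \mathcal{V}\otimes\mathcal{A}_X^{\bullet}) \to 0
\end{displaymath}
and its compactly supported analogue, the surjectivity (resp.\ injectivity) coming from a partition-of-unity argument done componentwise in a $\mathcal{V}$-constant trivialization (cf.\ \cite{BT}, Prop.~2.3, 2.7). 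The associated long exact sequences supply the connecting maps $\delta^{p}_{\mathcal{V}}$, $\delta^{p}_{\mathcal{V},c}$ in exactly the $P$-$Q$-$\delta$ shape required by the M-V axiom.

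For (2), wedging with a representative $\omega$ of $\Omega$ is a morphism from the above short exact sequences of complexes into their analogues with coefficients $\mathcal{V}\otimes\mathcal{W}$ and degrees shifted by $s$; passing to long exact sequences yields the commutative ladder making $\Omega\cup\bullet$ a M-V morphism on both $\mathcal{H}_X^{*}(\mathcal{V})$ and $\mathcal{H}_{X,c}^{*}(\mathcal{V})$. For (3)(i), $f^{*}$ sends the $U,V$-sequence on $X$ to the $\widetilde U,\widetilde V$-sequence on $Y$ with $\widetilde U = f^{-1}(U)$, and preserves compact supports when $f$ is proper by Lemma \ref{pull-push-support}(1). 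For (3)(ii), orientations of $X$ and $Y$ are precisely what is needed so that the current-level $f_{*}$ of Section 3.2 is defined (Lemma \ref{pull-push-support}(2)), and thereby induces $f_{*}$ on the Dolbeault-style cohomology computed equivalently by forms or by currents; in the compactly supported case the condition $f|_{\mathrm{supp}\,S}$ proper is automatic, while the non-compact case needs $f$ proper.

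The main obstacle is not conceptual but bookkeeping: verifying that $f^{*}$ and $f_{*}$ commute with the restrictions and extensions-by-zero that define $P$ and $Q$, so that the vertical squares between the long exact sequences are commutative. This is settled by Proposition \ref{com2}, which supplies $j'_{*}f_V^{*}=f^{*}j_{*}$ on forms and $f_{V*}j'^{*}=j^{*}f_{*}$ on currents in exactly the generality needed. Once these diagrams are in place, both vertical ladders in (3) commute and the five-lemma-free M-V axiom is verified; the compactly supported statement in (3)(i) then goes through exactly when $f$ is proper, and (3)(ii) exactly when $X$ and $Y$ are oriented, matching the hypotheses of the proposition.
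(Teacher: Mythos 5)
Your proposal is correct and follows exactly the route the paper itself takes: the paper proves Proposition \ref{example2} simply by declaring it the analogue of Proposition \ref{example1}, with the substitutions of Section 3.2 (forms $\mathcal{A}_X^{*}$ for $\mathcal{A}_X^{*,*}$, $f^{-1}\mathcal{V}$ for $f^{*}\mathcal{E}$, etc.), the short exact sequences from partition of unity as in \cite{BT}, Prop.\ 2.3, 2.7, and Proposition \ref{com2} in place of Proposition \ref{com1} to get commutativity of the ladders at the level of forms and currents. Your handling of the orientation hypothesis in (3)(ii) and of properness (automatic on compact supports, needed otherwise) matches the paper's conventions, so there is nothing to add.
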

\subsection{$\mathcal{H}_{E,cv}^{*}(\mathcal{W})$ and $^\prime\mathcal{H}_{X\times Y,cv}^{*}(\mathcal{W})$}
Let $\pi:E\rightarrow X$ be a smooth fiber bundle on a smooth manifold $X$ and  $\mathcal{W}$ a local system of $\mathbb{R}$-modules of finite rank on $E$. Define  $\mathcal{H}_{E,cv}^{p}(\mathcal{W})$ as follows:
\begin{displaymath}
\mathcal{H}_{E,cv}^{p}(\mathcal{W})(U)=H_{cv}^{p}(E_U,\mathcal{W})
\end{displaymath}
 for any open set $U$ in $X$, and the restriction
\begin{displaymath}
\rho_{U,V}=\tilde{j}_{VU}^*:\mathcal{H}_{E,cv}^{p}(\mathcal{W})(U)\rightarrow\mathcal{H}_{E,cv}^{p}(\mathcal{W})(V)
\end{displaymath}
for any open sets $V\subseteq U$ in $X$, where  $\tilde{j}_{VU}:E_V\rightarrow E_U$ is the inclusion. Denote $\mathcal{F}^p=\mathcal{W}\otimes\mathcal{A}_E^p$. Using a partition of unit subordinate to $\{U,V\}$, we get a short exact sequence of complexes (seeing \cite{BT}, Prop. 2.3)
\begin{equation}\label{cv-exact1}
\small{\xymatrix{
 0\ar[r] & \Gamma_{cv}(E_{U\cup V},\mathcal{F}^\bullet)   \ar[r]^{P_{U,V}\qquad\quad} & \Gamma_{cv}(E_{U},\mathcal{F}^\bullet)\oplus\Gamma_{cv}(E_V,\mathcal{F}^\bullet) \ar[r]^{\qquad\quad Q_{U,V}}&\Gamma_{cv}(E_{U\cap V},\mathcal{F}^\bullet)\ar[r] &0 }}
\end{equation}
for any open set $U$, $V$ in $X$, where $P_{U,V}=(\rho_{V\cup U,U},\mbox{ }\rho_{V\cup U,V})$ and $Q_{U,V}=\rho_{U,V\cap U}-\rho_{V,V\cap U}$.

View $X\times Y$ as a  smooth fiber bundle over $X$ and suppose that $\mathcal{W}$ is a local system of $\mathbb{R}$-modules of finite rank on $X\times Y$. Define  $^\prime\mathcal{H}_{X\times Y,cv}^{p}(\mathcal{W})$ as follows:
\begin{displaymath}
^\prime\mathcal{H}_{X\times Y,cv}^{p}(\mathcal{W})(U)=H_{cv}^{p}(X\times U,\mathcal{W})
\end{displaymath}
 for any open set $U$ in $Y$, and the extension
\begin{displaymath}
i_{V,U}=j'_{VU*}:^\prime\mathcal{H}_{X\times Y,cv}^{p}(\mathcal{W})(V)\rightarrow^\prime\mathcal{H}_{X\times Y,cv}^{p}(\mathcal{W})(U)
\end{displaymath}
for any open sets $V\subseteq U$ in $Y$, where $j'_{VU}:X\times V\rightarrow X\times U$ is the inclusion.  Denote $\mathcal{G}^p=\mathcal{W}\otimes\mathcal{A}_{X\times Y}^p$. For any open set $U$, $V$ in $Y$, there exists a short exact sequence of complexes  (seeing \cite{BT}, Prop. 2.7)
\begin{equation}\label{cv-exact2}
\tiny{\xymatrix{
 0\ar[r] & \Gamma_{cv}(X\times (U\cap V), \mathcal{G}^\bullet) \ar[r]^{P_{U,V}\quad\quad} & \Gamma_{cv}(X\times U, \mathcal{G}^\bullet)\oplus\Gamma_{cv}(X\times V, \mathcal{G}^\bullet) \ar[r]^{\quad\qquad Q_{U,V}}&\Gamma_{cv}(X\times (U\cup V), \mathcal{G}^\bullet)\ar[r] &0 }}
\end{equation}
by a partition of unit subordinate to $\{U,V\}$, , where $P_{U,V}=(i_{V\cap U,U},i_{V\cap U,V})$ and $Q_{U,V}=i_{U,V\cup U}-i_{V,V\cup U}$.

By (\ref{cv-exact1}) and (\ref{cv-exact2}), we easily get following propositions.
\begin{prop}\label{example3}
Let $\pi:E\rightarrow X$ be a smooth fiber bundle on a connected smooth manifold $X$ and $\mathcal{V}$,  $\mathcal{W}$ local systems of $\mathbb{R}$-modules of finite ranks on $X$, $E$ respectively.

$(1)$ $\mathcal{H}_{E,cv}^*(\mathcal{W})$ is a M-V system of cdp presheaves on $X$.

$(2)$ Assume that $\mathcal{U}$ is a local system of $\mathbb{R}$-modules of finite rank on $E$ and  $\Omega$ is an element in $H^s(E,\mathcal{U})$ \emph{(resp.} $H_{cv}^s(E,\mathcal{U})$  \emph{)}. Then
\begin{displaymath}
\Omega\cup\bullet:\mathcal{H}_{E,cv}^*(\mathcal{W})\rightarrow \mathcal{H}_{E,cv}^{*}(\mathcal{U}\otimes\mathcal{W})[s]
\end{displaymath}
\emph{(resp.}
\begin{displaymath}
\Omega\cup\bullet:\mathcal{H}_{E}^*(\mathcal{W})\rightarrow \mathcal{H}_{E,cv}^{*}(\mathcal{U}\otimes\mathcal{W})[s]\emph{)}
\end{displaymath}
is a M-V morphism.

$(3)$ For any $\Omega\in H_{cv}^s(E,\mathcal{U})$, $\Omega\cup\pi^*(\bullet):\mathcal{H}_{X,c}^*(\mathcal{V})\rightarrow \mathcal{H}_{E,c}^{*+s}(\mathcal{U}\otimes\pi^{-1}\mathcal{V})[s]$
is a M-V morphism.

$(4)$ Set $r=\emph{dim}E-\emph{dim}X$. If $E$ is an oriented fiber bundle, then $\pi_*:\mathcal{H}_{E,cv}^*(\pi^{-1}\mathcal{V})\rightarrow\mathcal{H}_{X}^{*}(\mathcal{V})[-r]$ is a M-V morphism. In addition, if $X$ is  oriented and $E$ is a vector bundle, then $i_*:\mathcal{H}_{X}^*(i^{-1}\mathcal{W})\rightarrow\mathcal{H}_{E,cv}^{*}(\mathcal{W})[r]$ is a  M-V morphism.
\end{prop}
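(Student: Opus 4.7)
The strategy is to imitate the proofs of Proposition \ref{example1} and Proposition \ref{example2}, reducing each claim to a verification on the short exact sequences of complexes (\ref{cv-exact1}) and (\ref{cv-exact2}), together with the earlier commutation, support, and projection-formula results. The long exact sequence in cohomology obtained from (\ref{cv-exact1}) is the Mayer--Vietoris sequence needed to make $\mathcal{H}_{E,cv}^{*}(\mathcal{W})$ a M-V system in part (1). For the cdp property, one checks that a form on $E_{\bigcup_n U_n}=\bigsqcup_n E_{U_n}$ has compact vertical support iff each $\omega|_{E_{U_n}}$ does: every compact $K\subseteq\bigcup_n U_n$ meets only finitely many $U_n$, so $\pi^{-1}(K)\cap\textrm{supp}\,\omega$ is a finite union of compacta. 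Consequently $\Gamma_{cv}(E_{\bigcup_n U_n},\mathcal{W}\otimes\mathcal{A}_E^{\bullet})\to\prod_n \Gamma_{cv}(E_{U_n},\mathcal{W}\otimes\mathcal{A}_E^{\bullet})$ is an isomorphism of complexes, which passes to cohomology.

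For parts (2) and (3), let $\omega$ be a closed representative of $\Omega$. Wedge product with $\omega|_{E_U}$ commutes with the restriction maps in (\ref{cv-exact1}) and with the extension-by-zero maps in the $\Gamma_c$-version of (\ref{cv-exact2}), since wedging is a local operation. In part (2) the inclusion $\textrm{supp}(\omega\wedge\eta)\subseteq\textrm{supp}\,\omega\cap\textrm{supp}\,\eta$ keeps us inside the correct support class in both subcases $\Omega\in H^s(E,\mathcal{U})$ and $\Omega\in H_{cv}^{s}(E,\mathcal{U})$. In part (3) the same inclusion, combined with $\textrm{supp}\,\pi^{*}\eta\subseteq\pi^{-1}(\textrm{supp}\,\eta)$ and the compact vertical support of $\Omega$, shows that $\Omega\cup\pi^{*}\eta$ has compact support on $E$. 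Each case yields a morphism of short exact sequences of complexes, hence a commutative ladder of Mayer--Vietoris long exact sequences.

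For part (4), the integration-along-fibers operator $\pi_*$ defined in Section 3.3 satisfies $\pi_*\textrm{d}=\textrm{d}\pi_*$ together with the compatibility $\pi_*\circ\tilde{j}_{VU}^{*}=j_{VU}^{*}\circ\pi_*$ (the fiber-bundle analog of Proposition \ref{com2}); hence it supplies a chain map between (\ref{cv-exact1}) and the standard Mayer--Vietoris sequence of $\mathcal{A}_X^{*-r}(\mathcal{V})$. For $i_*$ in the vector bundle case, one invokes the identification $H_{cv}^{*}(E,\mathcal{W})\cong H^{*}(\Gamma_{cv}(E,\mathcal{W}\otimes\mathcal{D}_E^{\prime\bullet}))$ and observes that $i_*$ on currents commutes with restriction to preimages of open sets, while Lemma \ref{pull-push-support} guarantees that its image lies in $\Gamma_{cv}$; this again produces the required morphism of Mayer--Vietoris sequences.

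The main obstacle is the cdp verification in part (1)---the disjoint-union behavior of compact vertical supports is the one new ingredient not already handled in Sections 4.1--4.2---together with the bookkeeping in part (4), where one must carefully shuttle between the form complex and the current complex to realize $i_*$ as a cochain map. Beyond these points everything reduces to routine diagram chases of the pattern exemplified in Propositions \ref{example1} and \ref{example2}.
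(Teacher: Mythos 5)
Your proposal is correct and takes essentially the same approach as the paper, whose entire proof of this proposition is the remark that it follows from the short exact sequences (\ref{cv-exact1}) and (\ref{cv-exact2}). Your elaborations --- the disjoint-union check of the cdp condition (note each $K\cap U_n$ is compact because it is closed in $K$, the $U_n$ being disjoint), the support estimates for the cup products, and the passage to the current complex to realize $i_*$ --- are exactly the routine verifications the paper leaves implicit.
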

\begin{prop}\label{example4}
Suppose that $X$ and $Y$ are connected smooth manifolds. Let $\mathcal{V}$ and  $\mathcal{W}$ be local systems of $\mathbb{R}$-modules of finite ranks on $Y$ and $X\times Y$, respectively. If we  view $X\times Y$ as a smooth fiber bundle over $X$, then

$(1)$ $^\prime\mathcal{H}_{X\times Y,cv}^*(\mathcal{W})$  is a M-V system of cds precosheaves  on $Y$,

$(2)$ $\emph{pr}_2^*:\mathcal{H}_{Y,c}^*(\mathcal{V})\rightarrow^\prime\mathcal{H}_{X\times Y,cv}^*(\emph{pr}_2^{-1}\mathcal{V})$ is a M-V morphism, where $\emph{pr}_2:X\times Y\rightarrow Y$ is the second projection.
\end{prop}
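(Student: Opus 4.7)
The plan is to follow the template of Proposition \ref{example3}, adjusted for the fact that the disjoint union now takes place in the fiber $Y$ rather than in the base $X$, and that we verify the cds (precosheaf) condition in place of cdp. I address parts (1) and (2) in turn.

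For part (1), the Mayer--Vietoris long exact sequence is obtained by taking cohomology in the short exact sequence of complexes (\ref{cv-exact2}), which is already built from a partition of unity on $Y$. The cds condition requires, for any disjoint family $\{U_n\}$ of open subsets of $Y$, that the natural map
\begin{displaymath}
\bigoplus_n H_{cv}^p(X \times U_n,\mathcal{W}) \longrightarrow H_{cv}^p\bigl(X \times \textstyle\bigsqcup_n U_n,\; \mathcal{W}\bigr)
\end{displaymath}
is an isomorphism. The subtlety is that at the chain level the analogous map is injective but \emph{not} surjective, since a $cv$-section on the disjoint union need only be locally finite over $X$ rather than finitely supported in $n$. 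I would verify the cohomological statement as follows. Injectivity is immediate by restricting a given primitive to each piece $X \times U_n$. For surjectivity, given a cocycle $\omega$, choose a locally finite partition of unity $\{\rho_\alpha\}$ on $X$ subordinate to a cover by precompact opens. Each $\rho_\alpha \omega$ is then compactly supported in $X \times \bigsqcup_n U_n$, hence touches only finitely many $U_n$. Using the identity $\sum_\alpha d\rho_\alpha \wedge \omega = 0$ together with a telescoping construction, one produces a $cv$-primitive $\eta$ such that $\omega - d\eta$ has only finitely many nonzero components $\omega_n$, realizing $[\omega]$ as the image of an element of the direct sum.

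For part (2), the pullback $\text{pr}_2^*$ sends $\mathcal{A}_c^p(U,\mathcal{V})$ into $\mathcal{A}_{cv}^p(X \times U,\text{pr}_2^{-1}\mathcal{V})$ because Lemma \ref{pull-push-support} yields $\text{supp}(\text{pr}_2^* \omega) \subseteq X \times \text{supp}\,\omega$, which is $cv$ over $X$ whenever $\text{supp}\,\omega$ is compact in $U$. Compatibility of $\text{pr}_2^*$ with the extension-by-zero morphisms $j'_{VU*}$ defining the precosheaf structure is exactly Proposition \ref{com2} applied to $f = \text{pr}_2$ and the inclusions of $V \subseteq U$ on the $Y$-factor. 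Since $\text{pr}_2^*$ commutes with $d$, it induces a morphism between the short exact sequences producing the M-V sequences on either side, and hence a morphism between the associated long exact sequences, so $\text{pr}_2^*$ is a M-V morphism.

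The main obstacle I anticipate is the surjectivity step in the cds argument. The difficulty lies in producing a primitive $\eta$ that is itself $cv$-supported: the naive ``subtract off the infinite tail'' must be organized as a telescoping sum that remains locally finite over $X$, which is precisely where the compact exhaustion and partition of unity on $X$ enter. Once this technical point is handled, the remainder of the proposition is routine bookkeeping via Proposition \ref{com2} and the standard homological algebra of short exact sequences.
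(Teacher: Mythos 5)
Your part (2), and the injectivity half of the cds condition in part (1), are fine (though note that the compatibility $j'_{VU*}\circ\mathrm{pr}_2^*=\mathrm{pr}_2^*\circ j_{VU*}$ is not literally an instance of Proposition \ref{com2}, which assumes the map is proper, whereas $\mathrm{pr}_2$ is proper only for compact $X$; the identity is elementary and should be checked directly). The genuine problem is the surjectivity half of the cds condition, which you rightly single out as the crux but whose proposed resolution cannot work: the component classes $[\omega_n]\in H^{p}_{cv}(X\times U_n)$ are invariants of $[\omega]$ (restriction to the clopen piece $X\times U_n$ is a chain map preserving $cv$), so no telescoping construction can make $\omega-d\eta$ have finitely many nonzero components unless all but finitely many $[\omega_n]$ already vanish --- and this can fail. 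Take $X=\mathbb{R}^2\setminus\{p_n\}_{n\geq1}$ with $|p_n|\to\infty$ and let $a_n\in H^1(X)$ be the class with period $\delta_{nm}$ around $p_m$. Since $a_n$ restricts to zero on $D_{|p_n|-1}\setminus\{p_m\}$, one can correct any representative by an exact form cut off near that disk to obtain a closed representative $\alpha_n$ with $\mathrm{supp}\,\alpha_n\subseteq\{|z|\geq|p_n|-2\}$; the family $\{\mathrm{supp}\,\alpha_n\}$ is then locally finite in $X$. Taking disjoint intervals $U_n\subseteq Y$ and $\beta_n\in\Gamma_c(U_n,\mathcal{A}^1_{U_n})$ with $\int\beta_n=1$, the form $\omega=\sum_n\mathrm{pr}_1^*\alpha_n\wedge\mathrm{pr}_2^*\beta_n$ is a closed $cv$ $2$-form on $X\times\bigsqcup_nU_n$, and fiber integration over $U_n$ sends $[\omega|_{X\times U_n}]$ to $a_n\neq0$ for every $n$, so $[\omega]$ lies outside the image of $\bigoplus_nH^2_{cv}(X\times U_n)$. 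Hence the cds condition, and with it part (1) as stated for arbitrary connected $X$, is false.

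To be fair, the paper offers no argument for this point either (it asserts the proposition follows ``easily'' from the exact sequence (\ref{cv-exact2}), which only yields the Mayer--Vietoris part), so you have located a real defect rather than merely missed the intended proof. The proposition is applied only in Lemma \ref{Lem-Kun}, where $X=\mathbb{R}^n$, and there the cds condition does hold --- but the proof is exactly the controlled version of your telescoping: by restricting to a single fiber $\{x_0\}\times\bigsqcup U_n$ one sees that $[\omega_n]\neq0$ for only finitely many $n$; for the remaining $n$ one must choose $cv$ primitives $\eta_n$ of $\omega_n$ vanishing over balls $B_{R_n}$ with $R_n\to\infty$, which is possible because the restriction $H^*_{cv}(\mathbb{R}^n\times U_n)\to H^*_{cv}(B\times U_n)$ is an isomorphism --- precisely the feature of $\mathbb{R}^n$ that fails for the punctured plane above. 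Your write-up should therefore either restrict part (1) to $X=\mathbb{R}^n$ (or to $X$ compact, where $cv=c$ and the chain-level argument works) and supply this controlled-primitive argument, or impose an explicit hypothesis ruling out cohomology of $X$ supported near infinity.
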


\section{Some applications}
Let $X$ be a smooth manifold (resp. complex manifold) and $\mathcal{V}$ (resp. $\mathcal{E}$) a local system (resp. locally free sheaf) of $\mathbb{R}$ (resp. $\mathcal{O}_X$)- modules of finite rank on $X$. An open covering of $X$ is called \emph{a} \emph{$\mathcal{V}$-constant} (resp. \emph{$\mathcal{E}$-free}) \emph{basis}, if it is both a basis of topology  and a $\mathcal{V}$-constant (resp. $\mathcal{E}$-free) covering of $X$.

\emph{Notice}. If we use $\mathbb{C}$ instead of $\mathbb{R}$, all conclusions in this section still hold, except Theorem \ref{L-H} $(2)$, Corollary \ref{proj-bun} (2) and Proposition \ref{prop} (3)-(6).

\subsection{Poincar\'{e} dulaity theorem}
Denote by $M^\vee=\textrm{Hom}_{\mathbb{R}}(M,\mathbb{R})$ the dual space of a $\mathbb{R}$-vector space $M$ and denote by $\rho^\vee:N^\vee\rightarrow M^\vee$ the dual of a linear map $\rho:M\rightarrow N$.

Let $X$ be a connected oriented smooth manifold with dimension $n$ and $\mathcal{V}$ a local system of $\mathbb{R}$-modules of finite rank on $X$. For any $p$, set $\mathcal{F}^{p}=\mathcal{V}\otimes\mathcal{A}_X^{p}$ and $\mathcal{G}^{p}=\mathcal{V}^\vee\otimes\mathcal{A}_X^{n-p}$. For an open set $U$, define
\begin{displaymath}
PD_U(\alpha)(\beta)=\int_U \textrm{tr}(\alpha\wedge\beta)
\end{displaymath}
for $\alpha\in \Gamma(U,\mathcal{F}^p)$ and $\beta\in \Gamma_c(U,\mathcal{G}^p)$, which gives a linear map $PD_U:\Gamma(U,\mathcal{F}^p)\rightarrow\Gamma_c(U,\mathcal{G}^p)^\vee$.
For open subsets $U$, $V$  of $X$ and $\varphi\in\Gamma_c(U,\mathcal{G}^p)^\vee$, $\psi\in\Gamma_c(V,\mathcal{G}^p)^\vee$, set
\begin{displaymath}
I^p(\varphi,\psi)=\varphi\circ \textrm{pr}_1-\psi\circ \textrm{pr}_2:\Gamma_c(U,\mathcal{G}^p)\oplus \Gamma_c(V,\mathcal{G}^p)\rightarrow\mathbb{R},
\end{displaymath}
where  $\textrm{pr}_1$ and $\textrm{pr}_2$ are projections from $\Gamma_c(U,\mathcal{G}^{p})\oplus \Gamma_c(V,\mathcal{G}^{p})$ onto $\Gamma_c(U,\mathcal{G}^{p})$ and $\Gamma_c(V,\mathcal{G}^{p})$ respectively. $I^p$ gives an isomorphism
\begin{displaymath}
\Gamma_c(U,\mathcal{G}^p)^\vee\oplus \Gamma_c(V,\mathcal{G}^p)^\vee\rightarrow\left(\Gamma_c(U,\mathcal{G}^p)\oplus \Gamma_c(V,\mathcal{G}^p)\right)^\vee,
\end{displaymath}
There is a commutative diagram
\begin{displaymath}
\tiny{\xymatrix{
   0\ar[r] &\Gamma(U\cup V,\mathcal{F}^{\bullet})\ar[d]^{PD_{U\cup V}} \ar[r]^{(j_1^*,j_2^*)\quad\quad} &\Gamma(U,\mathcal{F}^{\bullet})\oplus \Gamma(V,\mathcal{F}^{\bullet})\ar[d]^{(PD_{U},PD_{V})}\ar[r]^{\quad\quad j_3^*-j_4^*}&\Gamma(U\cap V,\mathcal{F}^{\bullet})\ar[d]^{PD_{U\cap V}}\ar[r] &0\\
   0\ar[r] &\Gamma_c(U\cup V,\mathcal{G}^{\bullet})^\vee\ar[d]^{\textrm{id}} \quad\ar[r]^{(j_{1*}^\vee,j_{2*}^\vee)\quad\quad} &\quad\Gamma_c(U,\mathcal{G}^{\bullet})^\vee\oplus \Gamma_c(V,\mathcal{G}^{\bullet})^\vee\ar[d]_{\cong}^{I^\bullet}\quad\ar[r]^{\qquad\quad j_{3*}^\vee-j_{4*}^\vee}&\quad\Gamma_c(U\cap V,\mathcal{G}^{\bullet})^\vee\ar[d]^{\textrm{id}}\ar[r] &0\\
 0\ar[r] & \Gamma_c(U\cup V,\mathcal{G}^{\bullet})^\vee   \quad\ar[r]^{(j_{1*}-j_{2*})^\vee\quad\quad} & \quad\left(\Gamma_c(U,\mathcal{G}^{\bullet})\oplus \Gamma_c(V,\mathcal{G}^{\bullet})\right)^\vee\quad\ar[r]^{\quad\quad\quad (j_{3*},j_{4*})^\vee}&\quad\Gamma_c(U\cap V,\mathcal{G}^{\bullet})^\vee\ar[r] &0, }}
\end{displaymath}
where the maps $j_i$, $j_i^*$ and $j_{i*}$ are self-explanatory, for $i=1,2,3,4$. The first and third rows are exact sequences of complexes (seeing \cite{BT}, Prop. 2.3, 2.7). Since $I^\bullet$ is isomorphic, the second row is also an exact sequence. Therefore, we get a commutative diagram of long exact sequences
\begin{displaymath}
\tiny{\xymatrix{
  \cdots H^{p}(U\cup V,\mathcal{V})\ar[d]^{PD_{U\cup V}} \ar[r]^{(j_1^*,j_2^*)\quad\quad} &H^{p}(U,\mathcal{V})\oplus H^{p}(V,\mathcal{V})\ar[d]^{(PD_{U},PD_{V})}\ar[r]^{\quad\quad j_3^*-j_4^*}&H^{p}(U\cap V,\mathcal{V})\ar[d]^{PD_{U\cap V}}\ar[r]^{\quad\delta^p_{\mathcal{V}}} &\cdots\\
 \cdots H_c^{n-p}(U\cup V,\mathcal{V}^\vee)^\vee     \quad\ar[r]^{(j_{1*}-j_{2*})^\vee\qquad\quad} & \quad H_c^{n-p}(U,\mathcal{V}^\vee)^\vee\oplus H_c^{n-p}(V,,\mathcal{V}^\vee)^\vee\quad\ar[r]^{\qquad\qquad (j_{3*},j_{4*})^\vee}&\quad H_c^{n-p}(U\cap V,,\mathcal{V}^\vee)^\vee\ar[r]^{\qquad\qquad\quad(\delta^{n-p-1}_{\mathcal{V}^\vee})^\vee} &\cdots,}}
\end{displaymath}
which implies that $\mathcal{PD}:\mathcal{H}^*_X(\mathcal{V})\rightarrow (\mathcal{H}_{X,c}^{n-*}(\mathcal{V}^{\vee}))^{\vee}= \mathcal{H}om_{\underline{\mathbb{R}}_X}(\mathcal{H}_{X,c}^{n-*}(\mathcal{V}^{\vee}),\underline{\mathbb{R}}_X)$ is a morphsim of M-V systems of cdp presheaves by Section 2.2 $(1)$ $(iii)$. Choose a $\mathcal{V}$-constant basis $\mathfrak{U}$  of $X$. For any  $U_1$, $\ldots$, $U_l\in \mathfrak{U}$, $\mathcal{V}|_{\bigcap_{i=1}^l U_i}$ is constant. The classical Poincar\'{e} duality theorem say that $\mathcal{PD}(\bigcap_{i=1}^l U_i)=PD_{\bigcap_{i=1}^l U_i}$ is  isomorphic. By Theorem \ref{1.1}, $\mathcal{PD}$ is an isomorphism.  We get the \emph{Poincar\'{e} dulaity theorem}  for local systems as follows.
\begin{thm}\label{poincare-duality}
Let $X$ be a connected oriented smooth manifold with dimension $n$ and $\mathcal{V}$ a local system of $\mathbb{R}$-modules of finite rank on $X$. Then, for every $p$,
\begin{displaymath}
PD_X:H^p(X,\mathcal{V})\rightarrow (H_c^{n-p}(X,\mathcal{V}^{\vee}))^{\vee}
\end{displaymath}
is an isomorphism, where $PD_X([\alpha])([\beta])=\int_X \emph{tr}(\alpha\wedge\beta)$, for $[\alpha]\in H^p(X,\mathcal{V})$ and $[\beta]\in H_c^{n-p}(X,\mathcal{V}^{\vee})$.
\end{thm}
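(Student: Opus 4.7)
The plan is to repackage the duality pairing as a morphism of M-V systems and invoke Theorem \ref{1.1}. For every open $U\subseteq X$, I would consider the pairing
\[ PD_U:\Gamma(U,\mathcal{V}\otimes\mathcal{A}_X^p)\to\Gamma_c(U,\mathcal{V}^\vee\otimes\mathcal{A}_X^{n-p})^\vee,\qquad PD_U(\alpha)(\beta)=\int_U\textrm{tr}(\alpha\wedge\beta), \]
check (by Stokes' theorem and the fact that $\textrm{tr}$ commutes with $d$) that $PD_U$ is a cochain map up to the expected sign, and pass to cohomology to obtain a candidate morphism
\[ \mathcal{PD}:\mathcal{H}_X^*(\mathcal{V})\to\mathcal{H}om_{\underline{\mathbb{R}}_X}\bigl(\mathcal{H}_{X,c}^{n-*}(\mathcal{V}^\vee),\underline{\mathbb{R}}_X\bigr). \]

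Next, I would verify that both sides are M-V systems of cdp presheaves. The source is one by Proposition \ref{example2}(1). For the target: $\mathcal{H}_{X,c}^{n-*}(\mathcal{V}^\vee)$ is a M-V system of cds precosheaves by Proposition \ref{example2}(1); dualizing into the injective $\mathbb{R}$-module $\mathbb{R}$ produces a M-V system of presheaves by Section 2.2, item $(1')(iii)$, and the cdp property is then supplied by item $(1)(iii)$ there. To check $\mathcal{PD}$ is a M-V morphism I would assemble, for each pair of open sets $U,V$, the two Mayer-Vietoris short exact sequences into a commutative ladder at the level of complexes, identifying the dualized compactly-supported sequence with the target sequence through the natural isomorphism
\[ \Gamma_c(U,\mathcal{G}^\bullet)^\vee\oplus\Gamma_c(V,\mathcal{G}^\bullet)^\vee\;\cong\;\bigl(\Gamma_c(U,\mathcal{G}^\bullet)\oplus\Gamma_c(V,\mathcal{G}^\bullet)\bigr)^\vee,\qquad I^\bullet(\varphi,\psi)=\varphi\circ\textrm{pr}_1-\psi\circ\textrm{pr}_2, \]
where $\mathcal{G}^\bullet=\mathcal{V}^\vee\otimes\mathcal{A}_X^{n-\bullet}$. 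Commutativity of the individual squares reduces to Stokes' theorem and to compatibility of $\textrm{tr}$ with restriction and extension by zero; taking cohomology then yields a commutative ladder of long exact sequences, i.e., a M-V morphism.

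At this point, I would fix a $\mathcal{V}$-constant basis $\mathfrak{U}$ of $X$. On any finite intersection $W=U_1\cap\cdots\cap U_l$ with $U_i\in\mathfrak{U}$, both $\mathcal{V}|_W$ and $\mathcal{V}^\vee|_W$ are trivial, so the classical Poincar\'{e} duality theorem for trivially-coefficient de Rham cohomology (applied componentwise to each basis vector) shows $\mathcal{PD}(W)=PD_W$ is an isomorphism. Thus hypothesis $(*)$ of Theorem \ref{1.1} is satisfied, and the theorem concludes $\mathcal{PD}(U)$ is an isomorphism for every open $U\subseteq X$; specializing to $U=X$ yields the statement.

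The main obstacle is the diagram-chase step showing $\mathcal{PD}$ is genuinely a M-V morphism, in particular commutativity with the Mayer-Vietoris connecting maps: one must match a primal connecting homomorphism with the transpose of the compactly-supported connecting homomorphism, and this is where the identification via $I^\bullet$ and a careful Stokes-plus-partition-of-unity computation are needed. Everything else is either bookkeeping (the cdp/cds properties of the systems involved) or classical input (Poincar\'{e} duality with constant coefficients on Euclidean-like pieces), and the passage from these local isomorphisms to the global statement is packaged uniformly by Theorem \ref{1.1}, which is the feature of the present approach that avoids any extra topological hypothesis on $X$.
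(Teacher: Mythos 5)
Your proposal follows essentially the same route as the paper: the same pairing $PD_U$, the same identification $I^\bullet$ of the dualized compactly-supported Mayer-Vietoris sequence with the direct sum of duals, the same appeal to Section 2.2 to see that the dualized system is a M-V system of cdp presheaves, the same local verification on a $\mathcal{V}$-constant basis via classical Poincar\'{e} duality, and the same final application of Theorem \ref{1.1}. The argument is correct as outlined.
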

\begin{rem}
The Poincar\'{e} duality theorem is a special case of the Verdier duality theorem, refer to \cite{Dim}, Cor. 3.3.12.
\end{rem}

\subsection{K\"{u}nneth formula for compact vertical cohomology}
\begin{lem}\label{Lem-Kun}
Let $\mathcal{V}$ be a local system of $\mathbb{R}$-modules of finite rank on a smooth manifold $Y$.  Assume that $\emph{pr}_1$ and $\emph{pr}_2$ are projections from $\mathbb{R}^n\times Y$ onto $\mathbb{R}^n$ and $Y$  respectively. If $\mathbb{R}^n\times Y$ is viewed as a smooth fiber bundle over $\mathbb{R}^n$, then
\begin{displaymath}
\emph{pr}_2^*:H_c^*(Y,\mathcal{V})\tilde{\rightarrow} H_{cv}^*(\mathbb{R}^n\times Y,\emph{pr}_2^{-1}\mathcal{V})
\end{displaymath}
 is an isomorphism.
\end{lem}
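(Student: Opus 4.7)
The plan is to apply Theorem \ref{1.1} to $\mathrm{pr}_2^*$ regarded as a morphism of M-V systems of cds precosheaves on $Y$. By Proposition \ref{example2}$(1)$, $\mathcal{H}_{Y,c}^*(\mathcal{V})$ is such a system; by Proposition \ref{example4}$(1)$, so is ${}^\prime\mathcal{H}_{\mathbb{R}^n\times Y,cv}^*(\mathrm{pr}_2^{-1}\mathcal{V})$; and Proposition \ref{example4}$(2)$ identifies $\mathrm{pr}_2^*$ as a M-V morphism between them. It therefore suffices, by Theorem \ref{1.1}, to produce a basis $\mathfrak{U}$ of the topology of $Y$ on whose finite intersections $\mathrm{pr}_2^*$ induces an isomorphism.

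I would take $\mathfrak{U}$ to be a $\mathcal{V}$-constant basis whose members are diffeomorphic to Euclidean space, obtained by refining charts trivializing $\mathcal{V}$. Since the class of $\mathcal{V}$-constant open sets is closed under finite intersections, any $V=V_1\cap\cdots\cap V_l$ with $V_i\in\mathfrak{U}$ still satisfies $\mathcal{V}|_V\cong\underline{M}_V$ for a finite-rank $\mathbb{R}$-module $M$, and accordingly $\mathrm{pr}_2^{-1}\mathcal{V}|_{\mathbb{R}^n\times V}\cong\underline{M}_{\mathbb{R}^n\times V}$. Choosing a basis of $M$ splits both sides into direct sums and reduces the problem to the constant-coefficient claim $\mathrm{pr}_2^*:H_c^k(V,\mathbb{R})\xrightarrow{\sim}H_{cv}^k(\mathbb{R}^n\times V,\mathbb{R})$ for every $k$.

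For this, the section $s:V\hookrightarrow\mathbb{R}^n\times V$, $y\mapsto(0,y)$, is a closed embedding, and the intersection of any $cv$-support with the slice $\{0\}\times V$ is compact, so $s^*$ descends to $H_{cv}^*(\mathbb{R}^n\times V,\mathbb{R})\to H_c^*(V,\mathbb{R})$. Because $\mathrm{pr}_2\circ s=\mathrm{id}_V$ one has $s^*\circ\mathrm{pr}_2^*=\mathrm{id}$ on $H_c^*(V,\mathbb{R})$. For the reverse composition I would build an explicit chain homotopy from the straight-line retraction $H_t(x,y)=(tx,y)$ via the standard formula $h\omega=\int_0^1\iota_{\partial/\partial t}H^*\omega\,dt$, yielding $\mathrm{id}-\mathrm{pr}_2^*\circ s^*=d\circ h+h\circ d$ on $\Omega_{cv}^*(\mathbb{R}^n\times V,\mathbb{R})$.

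The main obstacle I expect is the bookkeeping of compact vertical supports under $h$: for $\omega$ of $cv$-support and a compact $K\subseteq\mathbb{R}^n$, one needs $\mathrm{supp}(h\omega)\cap(K\times V)$ to be compact, which follows because $H_t$ maps $[0,1]\times K\times V$ into $\bigl(\mathrm{conv}(\{0\}\cup K)\bigr)\times V$, and the hull is again compact in $\mathbb{R}^n$; hence $h\omega\in\Omega_{cv}^{*-1}(\mathbb{R}^n\times V,\mathbb{R})$. Once this is verified, the constant-coefficient iso is a standard Poincar\'e-lemma-type argument (compare Bott--Tu, Prop.~4.6), and the full lemma follows from Theorem \ref{1.1}.
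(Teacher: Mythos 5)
Your proposal is correct and follows essentially the same route as the paper: identify $\mathrm{pr}_2^*$ as a M-V morphism of cds precosheaves on $Y$ via Proposition \ref{example4}, verify the local condition on a $\mathcal{V}$-constant basis, and conclude by Theorem \ref{1.1}. The only difference is that where you prove the constant-coefficient local step $H_c^*(V,\mathbb{R})\cong H_{cv}^*(\mathbb{R}^n\times V,\mathbb{R})$ by hand with the straight-line homotopy (correctly, including the support bookkeeping), the paper simply cites \cite{M1}, Thm.\ 3.7.
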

\begin{proof}
By Proposition \ref{example4} $(2)$, $\textrm{pr}_2^*:\mathcal{H}_{Y,c}^*(\mathcal{V})\rightarrow^\prime\mathcal{H}_{\mathbb{R}^n\times Y,cv}^*(\textrm{pr}_2^{-1}\mathcal{V})$ is a M-V morphism. Let $\mathfrak{U}$ be a $\mathcal{V}$-constant basis of $Y$. For any  $U_1$, $\ldots$, $U_l\in \mathfrak{U}$, $\mathcal{V}|_{\bigcap_{i=1}^l U_i}$ is constant. By \cite{M1}, Thm. 3.7, $\textrm{pr}_2^*$ is  isomorphic on $\bigcap_{i=1}^l U_i$. By Theorem \ref{1.1}, we get the lemma.
\end{proof}
\begin{prop}\label{Kun}
Let $\mathcal{U}$  and $\mathcal{V}$ be local systems of $\mathbb{R}$-modules of finite ranks on smooth manifolds $X$ and $Y$ respectively. Assume that $\emph{pr}_1$ and $\emph{pr}_2$ are projections from $X\times Y$ onto $X$ and $Y$ respectively. If $H_{c}^*(Y,\mathcal{V})=\bigoplus_{p\geq0}H_{c}^p(Y,\mathcal{V})$ has finite dimension, then $\emph{pr}_1^*(\bullet)\cup \emph{pr}_2^*(\bullet)$  gives an isomorphism
\begin{displaymath}
\bigoplus_{p+q=l}H^p(X,\mathcal{U})\otimes_{\mathbb{R}}H_{c}^q(Y,\mathcal{V})\rightarrow H_{cv}^l(X\times Y,\mathcal{U}\boxtimes\mathcal{V})
\end{displaymath}
for any $l$, where $X\times Y$ is viewed as a smooth fiber bundle over $X$ and $\mathcal{U}\boxtimes\mathcal{V}=\emph{pr}_1^{-1}\mathcal{U}\otimes\emph{pr}_2^{-1}\mathcal{V}$ is the \emph{external tensor product} of $\mathcal{U}$ and $\mathcal{V}$.
\end{prop}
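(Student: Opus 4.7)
The plan is to apply Theorem \ref{1.1} to a suitable M-V morphism of M-V systems of cdp presheaves on $X$, obtained by letting the first factor $U\subseteq X$ vary while $Y$ is kept fixed.

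For the source, first observe that since $H_c^*(Y,\mathcal{V})$ is finite-dimensional, each $H_c^q(Y,\mathcal{V})$ is a (finite, flat) $\mathbb{R}$-vector space, and only finitely many $q$ contribute. By Proposition \ref{example2}(1), $\mathcal{H}_X^*(\mathcal{U})$ is a M-V system of cdp presheaves; by Section 2.2 $(1')$(i) each $\mathcal{H}_X^*(\mathcal{U})\otimes_{\mathbb{R}} H_c^q(Y,\mathcal{V})$ is again such a M-V system, and by $(2')$ the finite direct sum with appropriate shifts
\[
\mathcal{M}^l(U)=\bigoplus_{p+q=l}H^p(U,\mathcal{U})\otimes_{\mathbb{R}}H_c^q(Y,\mathcal{V})
\]
is a M-V system of cdp presheaves on $X$. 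For the target, view $X\times Y$ as a trivial smooth fiber bundle over $X$ and set $\mathcal{N}^l(U)=H_{cv}^l(U\times Y,\mathcal{U}\boxtimes\mathcal{V})$; this is a M-V system of cdp presheaves on $X$ by Proposition \ref{example3}(1).

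Next I would define $F^l(U):\mathcal{M}^l(U)\to\mathcal{N}^l(U)$ by $a\otimes b\mapsto \mathrm{pr}_1^*a\cup\mathrm{pr}_2^*b$. This makes sense because if $b\in H_c^q(Y,\mathcal{V})$ is represented by $\omega$ with compact support $K\subseteq Y$, then $\mathrm{pr}_2^*\omega$ has support in $U\times K$, which is compact vertical over $U$; thus $\mathrm{pr}_2^*b\in H_{cv}^q(U\times Y,\mathrm{pr}_2^{-1}\mathcal{V})$, and the cup with $\mathrm{pr}_1^*a\in H^p(U\times Y,\mathrm{pr}_1^{-1}\mathcal{U})$ lands in $H_{cv}^{p+q}(U\times Y,\mathcal{U}\boxtimes\mathcal{V})$. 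That $F^*$ is a M-V morphism follows by combining Proposition \ref{example2}(2), Proposition \ref{example3}(2)-(3), and the fact that cup products are natural with respect to restriction and commute with the connecting homomorphisms of both Mayer--Vietoris sequences; this is a diagram-chase using the explicit partition-of-unity formulas recalled in (\ref{cv-exact1}) and in the exact sequences of Section 4.1.

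To verify hypothesis $(*)$ of Theorem \ref{1.1}, take $\mathfrak{U}$ to be a $\mathcal{U}$-constant good-cover basis of $X$, so that every finite intersection $W$ of elements of $\mathfrak{U}$ is $\mathcal{U}$-constant and diffeomorphic to $\mathbb{R}^n$. On such $W$, with $M$ the stalk of $\mathcal{U}$, contractibility gives $H^p(W,\mathcal{U})=M$ for $p=0$ and $0$ otherwise, so $\mathcal{M}^l(W)\cong M\otimes_{\mathbb{R}}H_c^l(Y,\mathcal{V})$. On the target side, $(\mathcal{U}\boxtimes\mathcal{V})|_{W\times Y}\cong M\otimes_{\mathbb{R}}\mathrm{pr}_2^{-1}\mathcal{V}$, and Lemma \ref{Lem-Kun} applied to the local system $M\otimes\mathcal{V}$ (together with flatness of $M$, giving $H_c^l(Y,M\otimes\mathcal{V})\cong M\otimes H_c^l(Y,\mathcal{V})$) identifies $\mathcal{N}^l(W)\cong M\otimes_{\mathbb{R}}H_c^l(Y,\mathcal{V})$ via $\mathrm{pr}_2^*$. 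Under these identifications $F^l(W)$ sends $m\otimes b$ to $\mathrm{pr}_1^*m\cup\mathrm{pr}_2^*b=m\otimes \mathrm{pr}_2^*b$, the identity map. Hence $F^l(W)$ is an isomorphism for every finite intersection $W$, so Theorem \ref{1.1} yields that $F^l(X)$ is an isomorphism, which is precisely the Künneth formula.

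The main obstacle will be verifying cleanly that $F^*$ intertwines the Mayer--Vietoris connecting homomorphisms of $\mathcal{M}^*$ (inherited from those of $\mathcal{H}_X^*(\mathcal{U})$ after tensoring and direct summing) with those of $\mathcal{N}^*$ (built from partitions of unity on $X$ lifted to $X\times Y$); once one chooses compatible partitions this is routine, but setting up the comparison carefully is the one step that requires care.
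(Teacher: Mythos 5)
Your proof is correct, but it takes a genuinely different route from the paper's. The paper proves Proposition \ref{Kun} by a \v{C}ech--de Rham double complex argument in the style of Bott--Tu and of \cite{M1}, Thm.~3.2: it builds two double complexes $K^{*,*}$ and $L^{*,*}$ over a good, $\mathcal{U}$-constant cover, maps one to the other by choosing a basis $\{[\gamma_i]\}$ of $H_c^*(Y,\mathcal{V})$, shows the map is an isomorphism on the $d''$-cohomology of each column via Lemma \ref{Lem-Kun}, and then identifies the total cohomologies through the Leray spectral sequence. You instead package the source $\bigoplus_{p+q=l}H^p(U,\mathcal{U})\otimes_{\mathbb{R}}H_c^q(Y,\mathcal{V})$ and the target $H_{cv}^l(U\times Y,\mathcal{U}\boxtimes\mathcal{V})$ as M-V systems of cdp presheaves on $X$ (using Section 2.2 $(1)$(ii), $(1')$(i), $(2')$ and Proposition \ref{example3}(1)) and apply Theorem \ref{1.1} directly, again reducing the local case to Lemma \ref{Lem-Kun}. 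Both arguments hinge on the same two ingredients --- finite-dimensionality of $H_c^*(Y,\mathcal{V})$ (for you, to preserve the cdp/flatness conditions under $\otimes$; for the paper, to commute cohomology with the tensor in the column computation) and the contractible local model --- so neither is stronger, but yours is more uniform with the rest of the paper (it is exactly the template used for Theorem \ref{L-H} and Theorem \ref{blow-up}), while the paper's avoids having to check that the cross product is a M-V morphism. On that last point, which you rightly flag as the delicate step: it does go through, since for a fixed closed compactly supported representative $\omega_b$ of $b$ the map $\alpha\mapsto\mathrm{pr}_1^*\alpha\wedge\mathrm{pr}_2^*\omega_b$ is a chain map between the two partition-of-unity short exact sequences of complexes (the one defining $\mathcal{H}_X^*(\mathcal{U})$ and the one in (\ref{cv-exact1}), with the partition of unity on $X$ pulled back along $\mathrm{pr}_1$), hence commutes with the connecting homomorphisms.
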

\begin{proof}
The proof is similar to that of  \cite{M1}, Thm. 3.2. For readers' convenience, we give a complete proof. Assume that $\textrm{dim}X=n$ and the open covering $\mathfrak{U}=\{U_\alpha\}$ of $X$ is both good   and  $\mathcal{V}$-constant. Define double complexes
\begin{displaymath}
K^{p,q}=\bigoplus_{r+s=q}C^p(\mathfrak{U},\mathcal{U}\otimes\mathcal{A}_X^r)\otimes_{\mathbb{R}} H_{c}^s(Y,\mathcal{V}),
\end{displaymath}
\begin{displaymath}
d'=\sum_{r+s=q}\delta\otimes \textrm{id}_{H_{c}^s(Y,\mathcal{V})},\quad d''=\sum_{r+s=q}\textmd{d}\otimes \textmd{id}_{H_{c}^s(Y,\mathcal{V})},
\end{displaymath}
and
\begin{displaymath}
L^{p,q}=C_{cv}^p(\textrm{pr}_1^{-1}\mathfrak{U},\mathcal{U}\boxtimes\mathcal{V}\otimes\mathcal{A}_{X\times Y}^q),
\end{displaymath}
\begin{displaymath}
d'=\delta,\quad d''=\textmd{d},
\end{displaymath}
where $(C_{\Phi}^\bullet(\mathfrak{U}, \mathcal{F}), \delta)$ denote the \u{C}ech complex of the sheaf $\mathcal{F}$ with supports in $\Phi$ associated to the open covering $\mathfrak{U}$.

Let $\{\gamma_i\}$ be a collection of forms in $\Gamma_c(\mathcal{V}\otimes\mathcal{A}^*_Y)$ with pure degrees, such that $\{[\gamma_i]\}$ is a basis of $H_{c}^*(Y,\mathcal{V})$. Linear extensions of the map
\begin{displaymath}
\{\eta_{\alpha_0,\ldots,\alpha_p}\}\otimes[\gamma_i]\mapsto \{pr_1^*\eta_{\alpha_0,\ldots,\alpha_p}\wedge pr_2^*\gamma_i\}.
\end{displaymath}
give a morphism $f:K^{*,*}\rightarrow L^{*,*}$ of double complexes. Set $\textrm{rank}\mathcal{V}=k$.  $H^r(U_{\alpha_0,...,\alpha_p},\mathcal{U})\cong H^r(U_{\alpha_0,...,\alpha_p})^{\oplus k}$ is $\mathbb{R}^k$ for $r=0$ and zero otherwise, since  $\mathcal{U}$ is constant on $U_{\alpha_0,...,\alpha_p}\cong\mathbb{R}^n$. So
\begin{displaymath}
\begin{aligned}
H_{d''}^q(K^{p,*})\cong &\left(\prod_{\alpha_0,\ldots,\alpha_p}H^0(U_{\alpha_0,...,\alpha_p},\mathcal{U})\right)\otimes_{\mathbb{R}}H_{c}^q(Y,\mathcal{V})\\
\cong &\prod_{\alpha_0,\ldots,\alpha_p}H_{c}^q(Y,\mathcal{V})^{\oplus k},
\end{aligned}
\end{displaymath}
where the finiteness of the dimension of $H_{c}^s(Y,\mathcal{V})$ implies the second isomorphism, and
\begin{displaymath}
\begin{aligned}
H_{d''}^q(L^{p,*})= &\prod_{\alpha_0,\ldots,\alpha_p}H_{cv}^q(U_{\alpha_0,\ldots,\alpha_p}\times Y,\mathcal{U}\boxtimes\mathcal{V})\\
\cong &\prod_{\alpha_0,\ldots,\alpha_p}H_{cv}^q(U_{\alpha_0,\ldots,\alpha_p}\times Y,\textrm{pr}_2^{-1}\mathcal{V})^{\oplus k}.
\end{aligned}
\end{displaymath}
The morphism $H_{d''}^q(K^{p,*})\rightarrow H_{d''}^q(L^{p,*})$ induced by $f$ is just
\begin{displaymath}
\prod_{\alpha_0,\ldots,\alpha_p} (\textrm{pr}_2^*)^{\oplus k}:\prod_{\alpha_0,\ldots,\alpha_p}H_{c}^q(Y,\mathcal{V})^{\oplus k}\rightarrow \prod_{\alpha_0,\ldots,\alpha_p}H_{cv}^q(U_{\alpha_0,\ldots,\alpha_p}\times Y,\textrm{pr}_2^{-1}\mathcal{V})^{\oplus k},
\end{displaymath}
which is isomorphic by Lemma \ref{Lem-Kun}. Therefore, $f$ induces an isomorphism $H^p(f):H^p(K^*)\rightarrow H^p(L^*)$, where $K^*$ and $L^*$ are the simple complexes associated to $K^{*,*}$ and $L^{*,*}$, respectively.

Consider the spectral sequence $E_2^{p,q}=H_{d''}^qH_{d'}^p(K^{*,*})\Rightarrow H^{p+q}(K^*)$. By Leray Theorem (\cite{Dem}, p. 209, (5.17)),
\begin{displaymath}
E_2^{p,q}=\left\{
 \begin{array}{ll}
 \bigoplus_{r+s=q}H^r(X,\mathcal{U})\otimes_{\mathbb{R}}H_{c}^s(Y,\mathcal{V}),&~p=0\\
 &\\
 0,&~\textrm{otherwise},
 \end{array}
 \right.
\end{displaymath}
which implies $H^l(K^*)=\bigoplus_{p+q=l}H^p(X,\mathcal{U})\otimes_{\mathbb{R}}H_{c}^q(Y,\mathcal{V})$. Through similar computations, $H^l(L^*)=H_{cv}^l(X\times Y,\mathcal{U}\boxtimes\mathcal{V})$. We complete the proof.
\end{proof}

\begin{rem}
K\"{u}nneth formulas for cohomology and cohomology with compact support can be proved as  \cite{M1}, Theorem 3.2.
\end{rem}

\subsection{Leray-Hirsch theorem}
For graded vector spaces $V^*$ and $W^*$ over $\mathbb{R}$, the graded vector space $V^*\otimes_{\mathbb{R}}W^*$ is defined as
\begin{displaymath}
(V^*\otimes_{\mathbb{R}}W^*)^k=\bigoplus_{p+q=k}V^p\otimes_{\mathbb{R}}W^q
\end{displaymath}
for any $k$. For  bigraded vector spaces $V^{*,*}$ and $W^{*,*}$ over  $\mathbb{R}$, the bigraded vector space $V^{*,*}\otimes_{\mathbb{R}}W^{*,*}$ is defined as
\begin{displaymath}
(V^{*,*}\otimes_{\mathbb{R}}W^{*,*})^{k,l}=\bigoplus\limits_{\substack{p+q=k\\r+s=l}}V^{p,r}\otimes_{\mathbb{R}}W^{q,s}
\end{displaymath}
for any $k$, $l$.

\begin{thm}\label{L-H}
$(1)$ Let $\pi:E\rightarrow X$ be a smooth fiber bundle over a connected smooth manifold $X$ and $\mathcal{V}$ a local system of $\mathbb{R}$-modules of finite rank on $X$.

$(i)$ Assume that there exist classes $e_1,\dots,e_r$ of pure degrees in $H^*(E,\mathbb{R})$, such that, for every $x\in X$,  their restrictions $e_1|_{E_x},\dots,e_r|_{E_x}$ freely linearly generate $H^*(E_x,\mathbb{R})$. Then, $\pi^*(\bullet)\cup\bullet$ gives an isomorphism of graded vector spaces
\begin{displaymath}
H^*(X,\mathcal{V})\otimes_{\mathbb{R}} \emph{span}_{\mathbb{R}}\{e_1, ..., e_r\} \tilde{\rightarrow} H^*(E,\pi^{-1}\mathcal{V}),
\end{displaymath}
where $\emph{span}_{\mathbb{R}}\{e_1, ..., e_r\}$ is viewed as a graded subspace of $H^*(E,\mathbb{R})$.

$(ii)$ Assume that there exist classes $e_1,\dots,e_r$ of pure degrees in $ H_{cv}^*(E,\mathbb{R})$, such that, for every $x\in X$,  their restrictions $e_1|_{E_x},\dots,e_r|_{E_x}$ freely linearly generate $H_{c}^*(E_x,\mathbb{R})$. Then, $\pi^*(\bullet)\cup\bullet$ gives isomorphisms of graded vector spaces
\begin{displaymath}
H_c^*(X,\mathcal{V})\otimes_{\mathbb{R}} \emph{span}_{\mathbb{R}}\{e_1, ..., e_r\} \tilde{\rightarrow} H_c^*(E,\pi^{-1}\mathcal{V})
\end{displaymath}
and
\begin{displaymath}
H^*(X,\mathcal{V})\otimes_{\mathbb{R}} \emph{span}_{\mathbb{R}}\{e_1, ..., e_r\} \tilde{\rightarrow} H_{cv}^*(E,\pi^{-1}\mathcal{V}),
\end{displaymath}
where $\emph{span}_{\mathbb{R}}\{e_1, ..., e_r\}$ is viewed as a graded subspace of $H_{cv}^*(E,\mathbb{R})$.

$(iii)$ Assume that there exist classes $e_1,\dots,e_r$ of pure degrees in $ H_{c}^*(E,\mathbb{R})$, such that, for every $x\in X$,  their restrictions $e_1|_{E_x},\dots,e_r|_{E_x}$ freely linearly generate $H_{c}^*(E_x,\mathbb{R})$. Then, $\pi^*(\bullet)\cup\bullet$ gives an isomorphism of graded vector spaces
\begin{displaymath}
H_c^*(X,\mathcal{V})\otimes_{\mathbb{R}} \emph{span}_{\mathbb{R}}\{e_1, ..., e_r\} \tilde{\rightarrow} H_c^*(E,\pi^{-1}\mathcal{V}),
\end{displaymath}
where $\emph{span}_{\mathbb{R}}\{e_1, ..., e_r\}$ is viewed as a graded subspace of $H_c^*(E,\mathbb{R})$.

$(2)$ Let $\pi:E\rightarrow X$ be a holomorphic fiber bundle over a connected complex manifold $X$ and let $\mathcal{E}$ be a locally free sheaf of $\mathcal{O}_X$-modules of finite rank on $X$. Assume that there exist classes $e_1,\dots,e_r$ of pure degrees in $ H^{*,*}(E)$, such that, for every $x\in X$,  their restrictions $e_1|_{E_x},\dots,e_r|_{E_x}$ freely linearly generate $H^{*,*}(E_x)$. Then, $\pi^*(\bullet)\cup\bullet$ gives an isomorphism of bigraded vector spaces
\begin{displaymath}
H^{*,*}(X,\mathcal{E})\otimes_{\mathbb{C}} \emph{span}_{\mathbb{C}}\{e_1, ..., e_r\} \tilde{\rightarrow} H^{*,*}(E,\pi^*\mathcal{E}),
\end{displaymath}
where $\emph{span}_{\mathbb{R}}\{e_1, ..., e_r\}$ is viewed as a bigraded subspace of $H^{*,*}(E,\mathbb{R})$.
\end{thm}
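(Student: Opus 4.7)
The plan is to realize each statement as an isomorphism of M-V systems on $X$ and apply Theorem \ref{1.1}. I will describe part $(2)$; parts $(1)(i)$--$(1)(iii)$ follow the same template, with Proposition \ref{example2} or \ref{example3} in place of \ref{example1}, and with contractible coordinate balls in place of polydiscs.

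Fixing $p$ and letting $e_i\in H^{d_i^{(1)},d_i^{(2)}}(E)$, Proposition \ref{example1}(2),(3) together with Section~2.2 properties $(2')$, $(5')$, $(6')$ yield a M-V morphism of cdp presheaves on $X$
\begin{displaymath}
\Phi^{p,*}\colon \bigoplus_{i=1}^{r}\mathcal{H}_X^{\,p-d_i^{(1)},*}(\mathcal{E})[-d_i^{(2)}]\;\longrightarrow\;\pi_*\mathcal{H}_E^{p,*}(\pi^*\mathcal{E}),
\end{displaymath}
namely $(\alpha_i)\mapsto\sum_i(e_i|_{\pi^{-1}(U)})\cup\pi_U^{*}\alpha_i$ on each open $U\subseteq X$. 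It suffices to exhibit a basis $\mathfrak{U}$ of $X$ for which hypothesis $(*)$ of Theorem \ref{1.1} is satisfied.

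Take $\mathfrak{U}$ to be the collection of open subsets of $X$ biholomorphic to a polydisc inside some coordinate chart over which $\mathcal{E}$ is free and $\pi$ is holomorphically trivial; finite intersections of such polydiscs in a common chart remain polydiscs. Fix such a $W$ (or any finite intersection). Then $\pi^{-1}(W)\cong W\times F$, $\pi^*\mathcal{E}$ is free of rank $m=\mathrm{rk}\,\mathcal{E}$, and since $W$ is Stein $H^{a,b}(W,\mathcal{E})=0$ for $b>0$. The Dolbeault K\"unneth theorem for a polydisc (whose Dolbeault complex has no higher cohomology) gives
\begin{displaymath}
H^{p,q}(W\times F,\pi^*\mathcal{E})\;\cong\;\bigoplus_{p_1+p_2=p}\Omega^{p_1}(W)^{\oplus m}\otimes_{\mathbb{C}}H^{p_2,q}(F).
\end{displaymath}
Since $\{e_i|_F\}$ is a $\mathbb{C}$-basis of $H^{*,*}(F)$ by hypothesis, both the source and the target at bidegree $(p,q)$ are canonically $\bigoplus_{p_2}\Omega^{p-p_2}(W)^{\oplus m}\otimes \mathrm{span}\{e_i|_F:\,d_i^{(1)}=p_2,\,d_i^{(2)}=q\}$. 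The local map $\Phi^{p,q}(W)$ sends $\alpha_i$ to $\pi^{*}\alpha_i\cup e_i|_W$; expanding $e_i|_W=\sum_{p_2\leq d_i^{(1)}}\omega_{i,p_2}\otimes\eta_{i,p_2}$ under K\"unneth, restriction to a fiber forces $\eta_{i,d_i^{(1)}}=e_i|_F$ and $\omega_{i,d_i^{(1)}}=1$, so $\Phi^{p,q}(W)$ is block-triangular with respect to the filtration by $p_2$ and has the identity on each diagonal block. Hence it is an isomorphism, and Theorem \ref{1.1} delivers the claim on $X$.

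The main obstacle is precisely this triangular-but-not-diagonal structure on each $W$: the K\"unneth isomorphism identifies source and target, but $\Phi$ carries genuine off-diagonal terms $\omega_{i,p_2}\otimes\eta_{i,p_2}$ with $p_2<d_i^{(1)}$ because $e_i|_W$ need not equal $1\otimes e_i|_F$. For parts $(1)(i)$--$(1)(iii)$ the difficulty disappears, since $H^{*}(W,\mathcal{V})$ is concentrated in degree $0$ on a contractible $W$ and the would-be off-diagonal coefficients sit in vanishing groups; there $\Phi(W)$ becomes literally a K\"unneth isomorphism, provided by the classical de Rham K\"unneth for $(1)(i)$, compactly-supported K\"unneth for $(1)(iii)$ and the first half of $(1)(ii)$, and Lemma \ref{Lem-Kun}/Proposition \ref{Kun} for the compact-vertical half of $(1)(ii)$.
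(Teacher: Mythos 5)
Your global strategy is the paper's: package $\sum_i \pi^*(\bullet)\cup e_i$ as a morphism of M-V systems via Propositions \ref{example1}--\ref{example3}, Proposition \ref{elem} and the closure properties of Section 2.2, and feed it to Theorem \ref{1.1}. The difference lies entirely in how hypothesis $(*)$ is checked: the paper imports the local isomorphism from \cite{M1}, Thm.~3.10 and \cite{M2}, Thm.~1.2 on each finite intersection of basis elements, whereas you try to prove it from scratch by a K\"unneth computation over a trivializing chart. For part $(1)$ your verification essentially works (the observation that the off-diagonal components of $e_i|_{W\times F}$ lie in $H^{>0}(W)=0$ is correct, and the required K\"unneth isomorphisms are classical or are Lemma \ref{Lem-Kun}/Proposition \ref{Kun}), modulo two repairs. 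First, in $(1)(iii)$ a class $e_i\in H_c^*(E,\mathbb{R})$ does not restrict over opens $\pi^{-1}(U)$, so $e_i\cup\pi^*(\bullet)$ is not directly a morphism of M-V systems over $X$; you must first replace $e_i$ by its image $\hat e_i\in H_{cv}^*(E,\mathbb{R})$, which is exactly the paper's one-line reduction of $(iii)$ to $(ii)$. Second, hypothesis $(*)$ quantifies over \emph{all} finite intersections $U_1\cap\dots\cap U_l$ of basis elements, and contractibility of the intersection is only guaranteed for balls lying in a common chart; one needs, e.g., a geodesically convex basis, or the stronger local input the paper quotes (Leray--Hirsch over an arbitrary open on which $\mathcal{V}$ is constant).

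The real gap is in part $(2)$. Your local step rests on the identification $H^{p,q}(W\times F,\pi^*\mathcal{E})\cong\bigoplus_{p_1+p_2=p}\Omega^{p_1}(W)^{\oplus m}\otimes_{\mathbb{C}}H^{p_2,q}(F)$, which you call the Dolbeault K\"unneth theorem for a polydisc and do not prove. This is not a formal statement: the Dolbeault complex of a product is a completed tensor product of Fr\'echet complexes, not an algebraic one, and the asserted isomorphism genuinely requires an argument --- the finite-dimensionality of $H^{*,*}(F)$ (which your hypotheses do supply) together with, say, a \v{C}ech--Dolbeault double complex as in the proof of Proposition \ref{Kun}, or nuclearity considerations. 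This assertion is essentially the content of the external input (\cite{M2}, Thm.~1.2) on which the paper's proof leans, so as written you have restated the dependence rather than eliminated it. Granting that K\"unneth isomorphism, your triangularity argument --- leading term of $e_i|_{W\times F}$ equal to $1\otimes(e_i|_F)$ by restriction to a fiber, lower-order terms strictly decreasing the fiber degree, identity on the diagonal blocks --- is correct and does finish the local step; note also that general finite intersections of your polydiscs are only Stein rather than polydiscs, but Stein-ness (vanishing of $H^{a,b}(W,\mathcal{E})$ for $b>0$ and the K\"unneth statement) is all your argument actually uses.
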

\begin{proof}
$(1)$ Set $k_i=\textrm{deg}e_i$, for $0\leq i\leq r$.

$(i)$ By Proposition \ref{example2} $(2)$, $(3)$ $(i)$,
\begin{equation}\label{L-H-1}
\pi^*:\mathcal{H}^{*}_X(\mathcal{V})[-k_i] \rightarrow \pi_*\mathcal{H}^{*}_E(\pi^{-1}\mathcal{V})[-k_i]
\end{equation}
and
\begin{equation}\label{L-H-2}
e_i\cup:\mathcal{H}^{*}_E(\pi^{-1}\mathcal{V})[-k_i] \rightarrow \mathcal{H}^*_E(\pi^{-1}\mathcal{V})
\end{equation}
are morphisms of M-V systems of cdp presheaves. Push out (\ref{L-H-2}), by Proposition \ref{elem} $(3)$,
\begin{equation}\label{L-H-3}
e_i\cup:\pi_*\mathcal{H}^{*}_E(\pi^{-1}\mathcal{V})[-k_i] \rightarrow \pi_*\mathcal{H}^*_E(\pi^{-1}\mathcal{V})
\end{equation}
is also a morphism of M-V systems of cdp presheaves, so is the composition of (\ref{L-H-1}) and (\ref{L-H-3})
\begin{displaymath}
\pi^*(\bullet)\cup e_i:\mathcal{H}^{*}_X(\mathcal{V})[-k_i] \rightarrow \pi_*\mathcal{H}^*_E(\pi^{-1}\mathcal{V})
\end{displaymath}
by  Section 2.2 $(5')$. Hence the sum
\begin{displaymath}
F^*=\sum_{i=1}^r\pi^*(\bullet)\cup e_i:\bigoplus_{i=1}^r\mathcal{H}^{*}_X(\mathcal{V})[-k_i] \rightarrow \pi_*\mathcal{H}^*_E(\pi^{-1}\mathcal{V})
\end{displaymath}
is a M-V morphism by Section 2.2 $(4')$. Let $\mathfrak{U}$ be a $\mathcal{V}$-constant basis of $X$. For $U_1$, ..., $U_l\in\mathfrak{U}$,  $\mathcal{V}|_{U_1\cap...\cap U_l}$ is constant. By \cite{M1}, Thm. 3.10 (1), $F^*$ is an isomorphism on $U_1\cap...\cap U_l$. By Theorem \ref{1.1}, we get $(i)$ immediately.

$(ii)$ As the proof of $(i)$, by Proposition \ref{example3} $(3)$, Proposition \ref{elem} $(3)$ and Section 2.2 $(4')$, $(5')$,
\begin{displaymath}
\sum_{i=1}^r\pi^*(\bullet)\cup e_i:\bigoplus_{i=1}^r\mathcal{H}^{*}_{X,c}(\mathcal{V})[-k_i]\rightarrow \pi_*\mathcal{H}^*_{E,c}(\pi^{-1}\mathcal{V})
\end{displaymath}
is a morphism of M-V systems of cds precosheaves. By Proposition \ref{example2} $(3)$ $(i)$, Proposition \ref{example3} $(2)$, Proposition \ref{elem} $(5)$ and Section 2.2 $(4)$, $(5')$,
\begin{displaymath}
\sum_{i=1}^r\pi^*(\bullet)\cup e_i:\bigoplus_{i=1}^r\mathcal{H}^{*}_X(\mathcal{V})[-k_i] \rightarrow \pi_*\mathcal{H}^*_{E,cv}(\pi^{-1}\mathcal{V})
\end{displaymath}
is a morphism of M-V systems of cdp presheaves. The rest of the proof is almost the same as that of $(i)$, except that we use \cite{M1}, Thm. 3.10 (2) instead of \cite{M1}, Thm. 3.10 (1).

$(iii)$ Let $\hat{e}_1$, $\ldots$, $\hat{e}_r$ be the images of $e_1$, $\ldots$, $e_r$ under the natural map $H_{c}^*(E)\rightarrow H_{cv}^*(E)$. By the hypothesis,  the natural map gives an isomorphism $\textrm{span}_{\mathbb{R}}\{e_1, ..., e_r\}\tilde{\rightarrow}\textrm{span}_{\mathbb{R}}\{\hat{e}_1, ..., \hat{e}_r\}$, which immediately implies $(iii)$ by $(ii)$.

$(2)$ For $0\leq i\leq r$, assume that the bidegree of $e_i$ is $(k_i,l_i)$. Fixed $p\in\mathbb{Z}$. By Proposition \ref{example1} $(2)$, $(3)$, Proposition \ref{elem} $(3)$ and Section 2.2 $(4')$, $(5')$,
\begin{displaymath}
F^*=\sum_{i=1}^r\pi^*(\bullet)\cup e_i:\bigoplus_{i=1}^r\mathcal{H}^{p-k_i,*}_X(\mathcal{E})[-l_i]\rightarrow \pi_*\mathcal{H}^{p,*}_E(\pi^*\mathcal{E}).
\end{displaymath}
is a morphism of M-V systems of cdp presheaves. As the proof of $(1)$ $(i)$, by \cite{M2}, Thm. 1.2, $F^*$ satisfies the hypothesis $(*)$ in Theorem \ref{1.1}, hence we proved $(2)$.
\end{proof}

Immediately, we get the projective bundle formulas.
\begin{cor}\label{proj-bun}
Let $\pi:\mathbb{P}(E)\rightarrow X$ be the projective bundle associated to a complex vector bundle $E$ of rank $r$ on a complex manifold $X$. Set $t=\frac{i}{2\pi}\Theta(\mathcal{O}_{\mathbb{P}(E)}(-1))\in \mathcal{A}^2({\mathbb{P}(E)})$, where $\mathcal{O}_{\mathbb{P}(E)}(-1)$ is the universal line bundle  on ${\mathbb{P}(E)}$ and  $\Theta(\mathcal{O}_{\mathbb{P}(E)}(-1))$ is a curvature of a hermitian metric on $\mathcal{O}_{\mathbb{P}(E)}(-1)$.

$(1)$ For any local system  $\mathcal{V}$ of $\mathbb{R}$-modules of finite rank, $\pi^*(\bullet)\cup\bullet$ gives isomorphisms of graded vector spaces
\begin{displaymath}
H^*(X,\mathcal{V})\otimes_{\mathbb{R}}\emph{span}_{\mathbb{R}}\{1,...,h^{r-1}\}\tilde{\rightarrow}H^*(\mathbb{P}(E),\pi^{-1}\mathcal{V})
\end{displaymath}
and
\begin{displaymath}
H_{c}^*(X,\mathcal{V})\otimes_{\mathbb{R}}\emph{span}_{\mathbb{R}}\{1,...,h^{r-1}\}\tilde{\rightarrow}H_{c}^*(\mathbb{P}(E),\pi^{-1}\mathcal{V}),
\end{displaymath}
where $h=[t]\in H^2({\mathbb{P}(E)},\mathbb{R})$ and $\emph{span}_{\mathbb{R}}\{1,...,h^{r-1}\}$ is viewed as a graded subspace of $H^*(\mathbb{P}(E),\mathbb{R})$.

$(2)$ Assume that $E$ is holomorphic and $\Theta(\mathcal{O}_{\mathbb{P}(E)}(-1))\in \mathcal{A}^{1,1}(\mathbb{P}(E))$ is the Chern curvature of a hermitian metric on $\mathcal{O}_{\mathbb{P}(E)}(-1)$. For any locally free sheaf  $\mathcal{E}$  of $\mathcal{O}_X$-modules of finite rank, $\pi^*(\bullet)\cup\bullet$ gives an isomorphism of bigraded vector spaces
\begin{displaymath}
H^{*,*}(X,\mathcal{E})\otimes_{\mathbb{C}}\emph{span}_{\mathbb{C}}\{1,...,h^{r-1}\}\tilde{\rightarrow}H^{*,*}(\mathbb{P}(E),\pi^*\mathcal{E}),
\end{displaymath}
where $h=[t]\in H^{1,1}({\mathbb{P}(E)})$ and $\emph{span}_{\mathbb{C}}\{1,...,h^{r-1}\}$ is viewed as a bigraded subspace of $H^{*,*}(\mathbb{P}(E))$.
\end{cor}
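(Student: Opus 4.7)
The plan is to deduce both parts of the corollary directly from Theorem \ref{L-H} by taking $e_i = h^{i-1}$ for $1 \le i \le r$, and reducing the fiberwise hypothesis of that theorem to the classical computation of the cohomology of complex projective space.

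For part (1), I first observe that $\pi:\mathbb{P}(E)\to X$ is proper, since its fibers $\mathbb{P}(E)_x \cong \mathbb{P}^{r-1}$ are compact. Consequently, for any closed subset $Z\subseteq \mathbb{P}(E)$ the restriction $\pi|_Z$ is automatically proper, so the family $cv$ coincides with the family of all closed subsets and $H_{cv}^*(\mathbb{P}(E),\mathcal{W}) = H^*(\mathbb{P}(E),\mathcal{W})$ for any local system $\mathcal{W}$. In particular, the classes $1,h,h^2,\ldots,h^{r-1} \in H^*(\mathbb{P}(E),\mathbb{R})$ live in $H_{cv}^*(\mathbb{P}(E),\mathbb{R})$, so Theorem \ref{L-H} (1)(ii) becomes applicable once the fiber condition is verified, yielding both isomorphisms of part (1) simultaneously.

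The fiber condition reduces to the standard fact that $H^*(\mathbb{P}^{r-1},\mathbb{R}) \cong \mathbb{R}[h]/(h^r)$. Indeed, for every $x\in X$ the restriction of $\mathcal{O}_{\mathbb{P}(E)}(-1)$ to the fiber $\mathbb{P}(E)_x$ is the tautological line bundle on $\mathbb{P}^{r-1}$, so $h|_{\mathbb{P}(E)_x} = \frac{i}{2\pi}\Theta\bigl(\mathcal{O}_{\mathbb{P}^{r-1}}(-1)\bigr)$ represents $c_1(\mathcal{O}_{\mathbb{P}^{r-1}}(-1))$; its powers $1, h|_{E_x},\ldots, h^{r-1}|_{E_x}$ then form an $\mathbb{R}$-basis of $H^*(\mathbb{P}^{r-1},\mathbb{R})$. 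Since $\mathbb{P}^{r-1}$ is compact we have $H_c^*(E_x,\mathbb{R}) = H^*(E_x,\mathbb{R})$, so the hypothesis of Theorem \ref{L-H} (1)(ii) is satisfied and part (1) follows.

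Part (2) is proved identically by invoking Theorem \ref{L-H} (2). With $E$ now holomorphic, $t$ has bidegree $(1,1)$ and is $\bar\partial$-closed, so $h = [t] \in H^{1,1}(\mathbb{P}(E))$ and $h^{i-1} \in H^{i-1,i-1}(\mathbb{P}(E))$ for $1\le i\le r$. The fiber restrictions again satisfy $h^{i-1}|_{\mathbb{P}(E)_x} = c_1(\mathcal{O}_{\mathbb{P}^{r-1}}(-1))^{i-1}$ in $H^{i-1,i-1}(\mathbb{P}^{r-1})$, and the classical Hodge decomposition of $\mathbb{P}^{r-1}$ gives $H^{*,*}(\mathbb{P}^{r-1}) \cong \mathbb{C}[h]/(h^r)$ concentrated on the diagonal $p = q$, so these classes freely generate $H^{*,*}(\mathbb{P}^{r-1})$. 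Thus Theorem \ref{L-H} (2) applies and produces the desired bigraded isomorphism.

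I do not expect any significant obstacle: Theorem \ref{L-H} does all the real work, and what remains is merely the standard computation of $H^*(\mathbb{P}^{r-1})$ and $H^{*,*}(\mathbb{P}^{r-1})$, together with the observation that properness of $\pi$ identifies $H_{cv}^*$ with $H^*$. The only minor subtlety is a sign convention: $h|_{E_x}$ equals the first Chern class of the tautological (rather than hyperplane) bundle, i.e.\ the negative of the hyperplane class, but this is irrelevant for linear independence of its powers.
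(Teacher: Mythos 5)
Your proof is correct and follows exactly the route the paper intends: the corollary is stated as an immediate consequence of Theorem \ref{L-H} with $e_i=h^{i-1}$, the fiberwise hypothesis being the classical fact that the powers of $c_1(\mathcal{O}_{\mathbb{P}^{r-1}}(-1))$ freely generate $H^*(\mathbb{P}^{r-1})$ (resp. $H^{*,*}(\mathbb{P}^{r-1})$). Your observations that properness of $\pi$ identifies $H_{cv}^*$ with $H^*$ and that compactness of the fibers identifies $H_c^*(E_x)$ with $H^*(E_x)$ are exactly the details the paper leaves implicit.
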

%\begin{proof}
%For every $x\in X$, $1$, $h$,\ldots, $h^{r-1}$ restricted to the fibre $\pi^{-1}(x)=\mathbb{P}(E_x)$ freely linearly generate $H^*(\mathbb{P}(E_x),\mathbb{R})$ or $H^{**}(\mathbb{P}(E_x))$.  By Proposition \ref{L-H}, we get the corollary.
%\end{proof}
\begin{rem}
Following the steps of \cite{CFGU},  Rao, S., Yang, S. and Yang, X.-D. gave another version of Hirsch theorem  (\cite{RYY2}, Lemma 3.2), where they assumed that $ H^{*,*}(F)$ is a free bigraded algebra. Using this, they  proved Corollary \ref{proj-bun} (2) (\cite{RYY2}, Lemma 3.3).
\end{rem}

Let $\pi:E\rightarrow X$  be an oriented smooth vector bundle of rank $r$ on a $($not necessarily orientable$)$ smooth manifold $X$. By \cite{BT}, Thm. 6.17 and Remark 6.17.1,
$\pi_*:H_{cv}^*(E,\mathbb{R})\rightarrow H^{*-r}(X,\mathbb{R})$ is an isomorphism. There exists  a closed form $\Phi\in\mathcal{A}_{cv}^r(E)$ satisfying $\pi_*[\Phi]_{cv}=1$ in $H^0(X,\mathbb{R})$, i.e.,  $[\Phi]_{cv}\in H_{cv}^r(E,\mathbb{R})$ is the \emph{Thom class} of $E$. Clearly, $\pi_*\Phi=1$ in $\mathcal{A}^0(X)$. We get the \emph{Thom isomorphism theorem} for local systems as follows.

\begin{cor}\label{is}
Let $\pi:E\rightarrow X$  be an oriented smooth vector bundle of rank $r$ on a $($not necessarily orientable$)$ smooth manifold $X$. Assume that $\mathcal{V}$ is a local system of $\mathbb{R}$-modules of finite rank on $X$. Then $\Phi\wedge \pi^*(\bullet)$ gives isomorphisms
\begin{displaymath}
H_{c}^{*-r}(X,\mathcal{V})\tilde{\rightarrow} H_{c}^*(E,\pi^{-1}\mathcal{V})
\end{displaymath}
and
\begin{displaymath}
H^{*-r}(X,\mathcal{V})\tilde{\rightarrow} H_{cv}^*(E,\pi^{-1}\mathcal{V}),
\end{displaymath}
which both have the inverse isomorphism $\pi_*$. Moreover, if $X$ is oriented, they coincide with the pushout $i_*$.
\end{cor}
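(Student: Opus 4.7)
The plan is to reduce the corollary to a single application of the Leray--Hirsch theorem (Theorem \ref{L-H}(1)(ii)) with the Thom form $\Phi$ playing the role of the one and only generator, and then identify the inverse via the projection formula. First I would observe that for every $x\in X$ the fiber $E_x\cong\mathbb{R}^r$ satisfies $H_c^*(E_x,\mathbb{R})=\mathbb{R}$ concentrated in degree $r$, and the hypothesis $\pi_*\Phi=1$ in $\mathcal{A}^0(X)$ forces $\int_{E_x}\Phi|_{E_x}=1$ for all $x$, so $[\Phi]_{cv}|_{E_x}$ freely generates $H_c^*(E_x,\mathbb{R})$. Taking $e_1:=[\Phi]_{cv}\in H_{cv}^r(E,\mathbb{R})$ thus fulfills the hypothesis of Theorem \ref{L-H}(1)(ii), which directly supplies the two isomorphisms
\begin{displaymath}
H_c^{*-r}(X,\mathcal{V})\tilde{\rightarrow}H_c^*(E,\pi^{-1}\mathcal{V})\quad\text{and}\quad H^{*-r}(X,\mathcal{V})\tilde{\rightarrow}H_{cv}^*(E,\pi^{-1}\mathcal{V}),
\end{displaymath}
since the cup product $\pi^*(\alpha)\cup[\Phi]_{cv}$ is represented by $\pi^*\alpha\wedge\Phi$, which up to sign equals $\Phi\wedge\pi^*\alpha$.

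Second, to identify the inverse with $\pi_*$, I would invoke the projection formula \eqref{pro-formula5}: for any representative $\alpha$,
\begin{displaymath}
\pi_*(\Phi\wedge\pi^*\alpha)=\pi_*\Phi\wedge\alpha=\alpha,
\end{displaymath}
so $\pi_*\circ(\Phi\wedge\pi^*(\bullet))=\mathrm{id}$ at the level of cohomology. Combined with the bijectivity already established, this makes $\pi_*$ a two-sided inverse in both cases.

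For the last assertion, assume $X$ is oriented. By Proposition \ref{example3}(4) applied with $\mathcal{W}=\pi^{-1}\mathcal{V}$ (noting $i^{-1}\pi^{-1}\mathcal{V}=\mathcal{V}$), the morphism $i_*:H^{*-r}(X,\mathcal{V})\to H_{cv}^*(E,\pi^{-1}\mathcal{V})$ is well defined. Functoriality of the pushforward of currents combined with $\pi\circ i=\mathrm{id}_X$ gives $\pi_*\circ i_*=\mathrm{id}$. Comparing with $\pi_*\circ(\Phi\wedge\pi^*(\bullet))=\mathrm{id}$ from the previous step, and using that $\pi_*$ is an isomorphism, we conclude $i_*=\Phi\wedge\pi^*(\bullet)$.

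The only nontrivial point to verify carefully is that the projection formula \eqref{pro-formula5}, as stated for a $\pi^{-1}\mathcal{V}$-valued form on $E$ and an $\mathcal{U}$-valued form on $X$, specializes correctly when the factor on $E$ is the scalar-valued Thom form $\Phi$; this is handled by viewing $\mathbb{R}$ as the trivial local system and unwinding the local representation of $\pi_*$ from Section 3.3. The analogous bookkeeping is needed for $\pi_*\circ i_*=(\pi\circ i)_*$ on currents with values in $\pi^{-1}\mathcal{V}$, but both are straightforward from the definitions and pose no genuine obstacle.
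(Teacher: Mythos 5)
Your proposal is correct and follows essentially the same route as the paper: verify the fiberwise generation hypothesis for $[\Phi]_{cv}$, apply Theorem \ref{L-H}(1)(ii), identify the inverse as $\pi_*$ via the projection formula (\ref{pro-formula5}), and deduce $i_*=\pi_*^{-1}$ from $\pi_*i_*=\mathrm{id}$ when $X$ is oriented. The only cosmetic difference is that you derive $\int_{E_x}\Phi|_{E_x}=1$ from $\pi_*\Phi=1$ where the paper cites Bott--Tu, Prop.\ 6.18; both are adequate.
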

\begin{proof}
By \cite{BT}, Prop. 6.18, the restriction $[\Phi]|_{E_x}$ is a generator of $H_{c}^*(E_x,\mathbb{R})=\mathbb{R}$. By Theorem \ref{L-H} $(1)$ $(ii)$, $\Phi\wedge \pi^*(\bullet)$ gives the two isomorphisms. For any $\mathcal{V}$-valued form $\omega$ on $X$, $\pi_*(\Phi\wedge\pi^*\omega)=\omega$ by the projection formula (\ref{pro-formula5}). So $\pi_*$ is their inverse isomorphisms.

If $X$ is oriented, the pushout $i_*$ is defined well in both cases (seeing Section 3.1.3 and 3.2) and $\pi_*i_*=\textrm{id}$. So $i_*=\pi_*^{-1}=\Phi\wedge\pi^*$.
\end{proof}

\subsection{Formulas of proper modifications}
Recall that a proper holomorphic map $\pi:X\rightarrow Y$ between connected complex manifolds is called a \emph{proper modification}, if there is a nowhere dense analytic subset $F\subset Y$, such that $\pi^{-1}(F)\subset X$ is nowhere dense and $\pi:X-f^{-1}(F)\rightarrow Y-F$ is biholomorphic.
\begin{prop}\label{prop}
Let $\pi:X\rightarrow Y$ be a proper modification of $n$-dimensional connected complex manifolds. Suppose that $\mathcal{V}$ is a local system of $\mathbb{R}$-modules of finite rank on $Y$ and $\mathcal{E}$ is a locally free sheaf of $\mathcal{O}_Y$-modules of finite rank on $Y$. Then $\pi^*$ gives isomorphisms of graded vector spaces

$(1)$ $H_{c}^{2n-1}(Y,\mathcal{V})\tilde{\rightarrow}H_{c}^{2n-1}(X,\pi^{-1}\mathcal{V})$,

$(2)$ $H^1(Y,\mathcal{V})\tilde{\rightarrow}H^1(X,\pi^{-1}\mathcal{V})$,

$(3)$ $H^{0,*}(Y,\mathcal{E})\tilde{\rightarrow} H^{0,*}(X,\pi^*\mathcal{E})$,

$(4)$ $H_c^{0,*}(Y,\mathcal{E})\tilde{\rightarrow} H_c^{0,*}(X,\pi^*\mathcal{E})$,

$(5)$ $H^{*,0}(Y,\mathcal{E})\tilde{\rightarrow} H^{*,0}(X,\pi^*\mathcal{E})$,

$(6)$ $H_c^{*,0}(Y,\mathcal{E})\tilde{\rightarrow} H_c^{*,0}(X,\pi^*\mathcal{E})$.
\end{prop}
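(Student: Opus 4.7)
The plan rests on three inputs. First, the Hartogs-type identity $\pi_*\Omega_X^p = \Omega_Y^p$: a holomorphic $p$-form on $X$ restricts to one on $X - E \cong Y - F$ and extends across the subset $F$ by the classical extension theorem, where $E \subset X$ is the exceptional divisor (pure complex codim $1$) and $F = \pi(E) \subset Y$ has complex codim at least $2$ by Zariski's main theorem for birational morphisms of smooth complex manifolds. Second, the rational-singularity vanishing $R\pi_*\mathcal{O}_X = \mathcal{O}_Y$ for such $\pi$. Third, basic topological local-cohomology vanishing in low degrees determined by the codimensions of $E$ and $F$.

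For (5) and (6), I will combine the Hartogs identity with the projection formula to obtain $\pi_*(\Omega_X^p \otimes \pi^*\mathcal{E}) \cong (\pi_*\Omega_X^p) \otimes \mathcal{E} \cong \Omega_Y^p \otimes \mathcal{E}$. Taking global sections gives (5); since $\pi$ is proper, the identification respects compact supports, giving (6). For (3) and (4), I will apply the Leray spectral sequence $H^q(X,\pi^*\mathcal{E}) \cong H^q(Y, R\pi_*\pi^*\mathcal{E})$; the projection formula combined with the second input gives $R\pi_*\pi^*\mathcal{E} \cong \mathcal{E} \otimes R\pi_*\mathcal{O}_X \cong \mathcal{E}$, hence $H^q(X,\pi^*\mathcal{E}) \cong H^q(Y,\mathcal{E})$, with the isomorphism induced by $\pi^*$; the compactly supported variant (4) works identically since $\pi$ is proper. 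Alternatively, these may be cast as applications of Theorem \ref{1.1} to the M-V morphism $\pi^*: \mathcal{H}_Y^{0,*}(\mathcal{E}) \to \pi_*\mathcal{H}_X^{0,*}(\pi^*\mathcal{E})$ from Proposition \ref{example1}, verifying the local isomorphism on a basis of Stein $\mathcal{E}$-free opens.

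Parts (1) and (2) concern cohomology in a single degree and are not amenable to Theorem \ref{1.1}, so I will argue topologically. The long exact sequence of local cohomology and the vanishing $H^k_F(Y,\mathcal{V}) = 0$ for $k < 4$ (real codim $F \geq 4$) give restriction isomorphisms $H^k(Y,\mathcal{V}) \cong H^k(Y - F, \mathcal{V}|_{Y-F})$ for $k \leq 2$; similarly $H^1_E(X,\pi^{-1}\mathcal{V}) = 0$ forces an injection $H^1(X,\pi^{-1}\mathcal{V}) \hookrightarrow H^1(X - E, \pi^{-1}\mathcal{V}|_{X-E})$. Composing with the biholomorphism $X - E \cong Y - F$, a diagram chase shows $\pi^*: H^1(Y,\mathcal{V}) \to H^1(X,\pi^{-1}\mathcal{V})$ is an isomorphism, proving (2); applying (2) to $\mathcal{V}^\vee$ and dualizing via Theorem \ref{poincare-duality} then yields (1). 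The main obstacle is the analytic input $R\pi_*\mathcal{O}_X = \mathcal{O}_Y$ needed for (3) and (4): it goes substantially beyond the Hartogs identity $\pi_*\mathcal{O}_X = \mathcal{O}_Y$ and ultimately rests on Grauert--Riemenschneider vanishing together with Hironaka's resolution of singularities.
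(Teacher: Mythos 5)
Your proposal is correct in substance but follows a genuinely different route from the paper. The paper proves all six parts uniformly: it exhibits $\pi^*$ as a morphism of M-V systems via Proposition \ref{example1} (resp.\ \ref{example2}), and then invokes Theorem \ref{1.1}, importing the local case --- trivial coefficients over a free/constant basis --- from the earlier papers \cite{M1} (Thm.\ 4.7) and \cite{M3} (Prop.\ 4.3, 4.4). You instead argue each part directly: Hartogs-type extension of holomorphic $p$-forms across the codimension-$\geq 2$ degeneracy locus plus the projection formula for $(5)$--$(6)$; $R\pi_*\mathcal{O}_X=\mathcal{O}_Y$ plus Leray for $(3)$--$(4)$; and local-cohomology vanishing in low degrees plus Poincar\'e duality for $(1)$--$(2)$. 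Your approach buys two things. First, $(1)$, $(2)$, $(5)$, $(6)$ become elementary and self-contained; this is genuinely valuable because these are single-degree statements, and Theorem \ref{1.1} as stated certifies isomorphisms in \emph{all} degrees of the system at once --- its hypothesis $(*)$ fails in degree $2$ (resp.\ bidegree $(p,1)$ with $p\geq 1$) already for a point blow-up, so the paper's ``other cases can be proved similarly'' is doing more work than it admits. Second, you correctly isolate where the depth lives: $(3)$--$(4)$ rest on $R^q\pi_*\mathcal{O}_X=0$ for $q>0$, which is also what underlies the cited local results, so neither route avoids it. What the paper's route buys is that the twist by $\mathcal{V}$ or $\mathcal{E}$ comes for free from local triviality, and no structure theory of the degeneracy set is needed. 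Two points in your sketch deserve a line of justification if written out: the claim $\mathrm{codim}_{\mathbb{C}}F\geq 2$ (take $F$ to be the degeneracy set; a codimension-one component would force $\pi^{-1}(F)$ to have full dimension, contradicting that it is nowhere dense), and the deduction of $(1)$ from $(2)$, which requires identifying the Poincar\'e dual of $\pi^*$ on $H_c^{2n-1}$ with $\pi_*$ on $H^1$ via the projection formula and then using $\pi_*\pi^*=\mathrm{id}$ together with the dualized isomorphism from $(2)$.
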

\begin{proof}
We will only prove $(3)$. By Proposition \ref{example1} $(3)$, $\pi^*:\mathcal{H}_{Y,c}^{0,*}(\mathcal{E})\rightarrow \pi_*\mathcal{H}_{X,c}^{0,*}(\pi^*\mathcal{E})$ is a  morphism of M-V systems of cds precosheaves. Set $\mathfrak{U}$ a $\mathcal{E}$-free basis of $Y$. For $U_1$, ..., $U_l\in \mathfrak{U}$, $\mathcal{E}|_{\bigcap_{i=1}^l U_i}$ is free. By  \cite{M3}, Prop. 4.3, $\pi^*$ is isomorphic on $\bigcap_{i=1}^l U_i$, i.e., $\pi^*$ satisfies the hypothesis $(*)$ in Theorem \ref{1.1}. So $\pi^*:H_c^{0,*}(Y,\mathcal{E})\rightarrow H_c^{0,*}(X,\pi^*\mathcal{E})$ is an isomorphism.

Other cases can be proved similarly, where we use \cite{M1}, Thm. 4.7. or \cite{M3}, Prop. 4.4 to check the hypothesis $(*)$ in Theorem \ref{1.1}.
\end{proof}
\begin{rem}
If $\pi$ is a blow-up map, Proposition \ref{prop} can be obtained immediately by Theorem \ref{blow-up} in the following section.
\end{rem}

\section{Blow-up formulas}
Blow-ups are fundamental transforms on complex manifolds and algebraic varieties.  They play an important role in complex and algebraic geometries. It is significant to calculate various cohomologies of blow-ups. There are several results of this aspect, seeing \cite{Gri,Hu,Ma,M1,M2,RYY,RYY2,V,YY,YZ}. In follows, we study blow-up formulas for cohomology with values in local systems and Dolbeault cohomology with values in locally free sheaves.

Let $\pi:\widetilde{X}\rightarrow X$ be the blow-up of a connected complex manifold $X$ along a connected complex submaifold $Y$. We know $\pi|_E:E=\pi^{-1}(Y)\rightarrow Y$ is the projective bunde $\mathbb{P}(N_{Y/X})$ associated to the normal bundle $N_{Y/X}$ over $Y$. Assume that $i_Y:Y\rightarrow X$ and $i_E:E\rightarrow \widetilde{X}$ are inclusions and $r=\textrm{codim}_{\mathbb{C}}Y$. Set $t=\frac{i}{2\pi}\Theta(\mathcal{O}_{E}(-1))\in \mathcal{A}^{1,1}(E)$, where $\mathcal{O}_{E}(-1)$ is the universal line bundle  on $E={\mathbb{P}(N_{Y/X})}$ and  $\Theta(\mathcal{O}_{E}(-1))$ is the Chern curvature of a hermitian metric on $\mathcal{O}_{E}(-1)$. Clearly, $\textrm{d}t=0$ and $\bar{\partial}t=0$.
\subsection{Blow-up formulas}
\begin{thm}\label{blow-up}
Assume that $X$, $Y$, $\pi$, $\widetilde{X}$, $E$, $t$, $i_Y$, $i_E$ are defined as above.  Then,
\begin{displaymath}\label{b-u-m}
\pi^*+\sum_{i=0}^{r-2}(i_E)_*\circ (h^i\cup)\circ (\pi|_E)^*
\end{displaymath}
gives isomorphisms
\begin{equation}\label{1}
H^k(X,\mathcal{V})\oplus \bigoplus_{i=0}^{r-2}H^{k-2-2i}(Y,i_Y^{-1}\mathcal{V})\tilde{\rightarrow} H^k(\widetilde{X}, \pi^{-1}\mathcal{V}),
\end{equation}
\begin{equation}\label{2}
H_{c}^k(X,\mathcal{V})\oplus \bigoplus_{i=0}^{r-2}H_{c}^{k-2-2i}(Y,i_Y^{-1}\mathcal{V})\tilde{\rightarrow} H_{c}^k(\widetilde{X},\pi^{-1}\mathcal{V}),
\end{equation}
for any $k$, where $h=[t]\in H^2(E,\mathbb{R})$ or $H^2(E,\mathbb{C})$  and $\mathcal{V}$ is a local system of $\mathbb{R}$ or $\mathbb{C}$-modules of finite rank on $X$, and an isomorphism
\begin{equation}\label{3}
H^{p,q}(X,\mathcal{E})\oplus \bigoplus_{i=0}^{r-2}H^{p-1-i,q-1-i}(Y,i_Y^*\mathcal{E})\tilde{\rightarrow} H^{p,q}(\widetilde{X},\pi^*\mathcal{E}),
\end{equation}
for any $p$, $q$, where $h=[t]\in H^{1,1}(E)$ and $\mathcal{E}$ is a locally free sheaf of $\mathcal{O}_X$-modules of finite rank on $X$.
\end{thm}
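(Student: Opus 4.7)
The plan is to apply Theorem \ref{1.1} to each of the three assertions. Write $\Phi:=\pi^{*}+\sum_{i=0}^{r-2}(i_{E})_{*}\circ(h^{i}\cup)\circ(\pi|_{E})^{*}$; for every open $U\subseteq X$ the restriction $\pi|_{\widetilde{U}}:\widetilde{U}\to U$ (with $\widetilde U=\pi^{-1}(U)$) is the blow-up of $U$ along $Y_{U}:=Y\cap U$, with exceptional divisor $E_{U}=(\pi|_{E})^{-1}(Y_{U})=\mathbb{P}(N_{Y_{U}/U})$, so the same formula defines $\Phi$ on every $U$ simultaneously.

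\textbf{Step 1 (M-V systems and morphism).} For the first isomorphism, put
\begin{displaymath}
\mathcal{M}^{*}\ :=\ \mathcal{H}^{*}_{X}(\mathcal{V})\ \oplus\ \bigoplus_{i=0}^{r-2}(i_{Y})_{*}\mathcal{H}^{*}_{Y}(i_{Y}^{-1}\mathcal{V})[-2-2i],\qquad \mathcal{N}^{*}\ :=\ \pi_{*}\mathcal{H}^{*}_{\widetilde{X}}(\pi^{-1}\mathcal{V}),
\end{displaymath}
both M-V systems of cdp presheaves on $X$, by Proposition \ref{elem}, Proposition \ref{example2}$(1)$, and the direct-sum/shift properties in Section 2.2. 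The pullback $\pi^{*}:\mathcal{H}^{*}_{X}(\mathcal{V})\to\pi_{*}\mathcal{H}^{*}_{\widetilde{X}}(\pi^{-1}\mathcal{V})$ is a M-V morphism by Proposition \ref{example2}$(3)(i)$. For each $0\leq i\leq r-2$, the $i$-th summand of $\Phi$ is the composition of three M-V morphisms: $(\pi|_{E})^{*}$ (Proposition \ref{example2}$(3)(i)$ applied to the proper map $\pi|_{E}:E\to Y$), then $h^{i}\cup$ (Proposition \ref{example2}$(2)$), then $(i_{E})_{*}$ (Proposition \ref{example2}$(3)(ii)$ applied to the proper closed embedding $i_{E}$, which raises degree by $2$); pushing forward along the continuous maps $i_{Y}$, $\pi\circ i_{E}=i_{Y}\circ\pi|_{E}$, and $\pi$ preserves M-V morphisms by Proposition \ref{elem}. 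Composing and summing via Section 2.2$(4')$--$(6')$ yields a M-V morphism $\mathcal{M}^{*}\to\mathcal{N}^{*}$. Cases $(2)$ and $(3)$ follow the same recipe, using the compactly supported analogue (for which properness of $\pi$ is essential, cf.\ Proposition \ref{example2}$(3)$) and Proposition \ref{example1} in the Dolbeault setting.

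\textbf{Step 2 (local verification of hypothesis $(*)$).} Choose a basis $\mathfrak{U}$ of $X$ consisting of polydisc charts $U\cong\Delta^{n-r}\times\Delta^{r}$ that are $\mathcal{V}$-constant (respectively $\mathcal{E}$-free) and satisfy either $U\cap Y=\emptyset$ or $U\cap Y=\Delta^{n-r}\times\{0\}$. A finite intersection of such charts is still of this form. In the disjoint case, $\pi:\widetilde U\to U$ is a biholomorphism, the summation terms vanish (since $Y_{U}=\emptyset$), and $\Phi|_{U}=\pi^{*}$ is trivially an isomorphism. In the non-disjoint case, $\widetilde U\cong\Delta^{n-r}\times\widetilde{\Delta_{0}^{r}}$, $E_{U}\cong\Delta^{n-r}\times\mathbb{P}^{r-1}$, the class $h|_{E_{U}}$ is pulled back from the hyperplane class on $\mathbb{P}^{r-1}$, and $\mathcal{V}|_{U}$ (resp.\ $\mathcal{E}|_{U}$) is a finite direct sum of copies of the constant (resp.\ structure) sheaf. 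By a K\"unneth-type decomposition (Proposition \ref{Kun} and its evident variants), the question reduces to the constant-coefficient statement on the model blow-up $\widetilde{\Delta_{0}^{r}}\to\Delta^{r}$ of a ball at the origin, which is a classical Mayer--Vietoris calculation combining the projective bundle formula on $E\cong\mathbb{P}^{r-1}$ (Corollary \ref{proj-bun}) with the Thom isomorphism for the normal bundle of $E$ in $\widetilde{\Delta^{r}}$ (Corollary \ref{is}). Hypothesis $(*)$ is therefore satisfied, and Theorem \ref{1.1} delivers the desired global isomorphism.

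\textbf{The principal obstacle} is the local model calculation in Step 2: verifying the blow-up formula for $\mathrm{Bl}_{0}\Delta^{r}$ with constant coefficients. Once this single computation is in hand, the M-V machinery of Section 2 combined with the M-V systems and morphisms constructed in Section 4 automatically promotes it to the global assertion for arbitrary $X$, $Y$, $\mathcal{V}$, and $\mathcal{E}$, because the coefficient sheaves are trivial on each basic open set and the formation of M-V morphisms commutes with direct sums, shifts, compositions, and proper pushforwards.
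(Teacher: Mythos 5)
Your Step 1 is exactly the paper's argument: the same M-V systems, the same decomposition of $\pi^*+\sum_i (i_E)_*\circ(h^i\cup)\circ(\pi|_E)^*$ into compositions of the morphisms from Propositions \ref{example1}/\ref{example2} and Proposition \ref{elem}, assembled via Section 2.2 $(2)$, $(4')$, $(5')$. The gap is in Step 2. Hypothesis $(*)$ of Theorem \ref{1.1} must be checked on \emph{arbitrary finite intersections} $U_1\cap\dots\cap U_l$ of basis elements, and your claim that ``a finite intersection of such charts is still of this form'' is false: two adapted polydisc charts centered at different points of $Y$ (or even at the same point in different holomorphic coordinates) intersect in an open set that is in general not biholomorphic to $\Delta^{n-r}\times\Delta^{r}$ with $Y$ cut out as $\Delta^{n-r}\times\{0\}$. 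Consequently your K\"unneth reduction to the model blow-up $\mathrm{Bl}_0\Delta^r$ does not apply to these intersections, and hypothesis $(*)$ is not verified. (There is also a secondary issue: a Thom-isomorphism argument for the normal bundle of $E$ is a de Rham device and does not transfer directly to the Dolbeault case \eqref{3}.)

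The paper closes this gap differently: it takes $\mathfrak{U}$ to be an $\mathcal{E}$-free (resp.\ $\mathcal{V}$-constant) basis, so that on any finite intersection $V=U_1\cap\dots\cap U_l$ the coefficient sheaf is free (resp.\ constant), and then invokes the already-established blow-up formulas with trivial coefficients on \emph{arbitrary} (noncompact) complex manifolds --- \cite{M2}, Thm.~1.2 for Dolbeault cohomology and \cite{M1}, Thm.~1.3 for (compactly supported) cohomology --- applied to the blow-up of $V$ along $V\cap Y$. That is the correct local input: a statement valid for every open $V$ on which the coefficients trivialize, not merely for model product charts. Your strategy can be repaired only by supplying such a trivial-coefficient theorem on arbitrary opens (which is itself proved in those references by another pass of the same Mayer--Vietoris machinery, with your model computation at the very bottom); as written, the single computation on $\mathrm{Bl}_0\Delta^r$ does not suffice to trigger Theorem \ref{1.1}.
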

\begin{proof}
By Proposition \ref{example1} $(3)$,
\begin{displaymath}
\pi^*:\mathcal{H}^{p,*}_X(\mathcal{E})\rightarrow \pi_*\mathcal{H}_{\widetilde{X}}^{p,*}(\pi^*\mathcal{E}),
\end{displaymath}
\begin{displaymath}
(\pi|_E)^*:\mathcal{H}_Y^{p-1-i,*}(i_Y^*\mathcal{E})[-1-i]\rightarrow (\pi|_E)_*\mathcal{H}_{E}^{p-1-i,*}((\pi|_E)^*i_Y^*\mathcal{E})[-1-i],
\end{displaymath}
\begin{displaymath}
(i_E)_*:i_{E*}\mathcal{H}_{E}^{p-1,*}(i_E^*\pi^*\mathcal{E})[-1]\rightarrow \mathcal{H}_{\widetilde{X}}^{p,*}(\pi^*\mathcal{E})
\end{displaymath}
and, by Proposition \ref{example1} $(2)$,
\begin{displaymath}
h^i\cup:\mathcal{H}_{E}^{p-1-i,*}((\pi|_E)^*i_Y^*\mathcal{E})[-1-i]\rightarrow \mathcal{H}_{E}^{p-1,*}((\pi|_E)^*i_Y^*\mathcal{E})[-1]
\end{displaymath}
are  morphisms of M-V systems of cdp presheaves, so are
\begin{equation}\label{a}
(\pi|_E)^*:i_{Y*}\mathcal{H}_Y^{p-1-i,*}(i_Y^*\mathcal{E})[-1-i]\rightarrow i_{Y*}(\pi|_E)_*\mathcal{H}_{E}^{p-1-i,*}((\pi|_E)^*i_Y^*\mathcal{E})[-1-i],
\end{equation}
\begin{equation}\label{b}
\begin{aligned}
h^i\cup:i_{Y*}(\pi|_E)_*\mathcal{H}_{E}^{p-1-i,*}((\pi|_E)^*i_Y^*\mathcal{E})[-1-i]\rightarrow & i_{Y*}(\pi|_E)_*\mathcal{H}_{E}^{p-1,*}((\pi|_E)^*i_Y^*\mathcal{E})[-1]\\
=&\pi_*i_{E*}\mathcal{H}_{E}^{p-1,*}(i_E^*\pi^*\mathcal{E})[-1],
\end{aligned}
\end{equation}
and
\begin{equation}\label{c}
(i_E)_*:\pi_*i_{E*}\mathcal{H}_{E}^{p-1,*}(i_E^*\pi^*\mathcal{E})[-1]\rightarrow \pi_*\mathcal{H}_{\widetilde{X}}^{p,*}(\pi^*\mathcal{E})
\end{equation}
by Proposition \ref{elem} $(1)$, $(3)$. Combining  (\ref{a})-(\ref{c}), we obtain a M-V morphism
\begin{displaymath}
(i_E)_*\circ (h^i\cup)\circ (\pi|_E)^*:i_{Y*}\mathcal{H}_Y^{p-1-i,*}(i_Y^*\mathcal{E})[-1-i]\rightarrow \pi_*\mathcal{H}_{\widetilde{X}}^{p,*}(\pi^*\mathcal{E})
\end{displaymath}
by Section 2.2 $(5')$. By Section 2.2 $(2)$, $(4')$, the sum
\begin{displaymath}
F^*=\pi^*+\sum_{i=0}^{r-2}(i_E)_*\circ (h^i\cup)\circ (\pi|_E)^*
\end{displaymath}
gives a  morphism
\begin{displaymath}
\mathcal{H}^{p,*}_X(\mathcal{E})\oplus \bigoplus_{i=0}^{r-2}i_{Y*}\mathcal{H}_Y^{p-1-i,*}(i_Y^*\mathcal{E})[-1-i]\rightarrow \pi_*\mathcal{H}_{\widetilde{X}}^{p,*}(\pi^*\mathcal{E})
\end{displaymath}
of M-V systems of cdp presheaves.

Let $\mathfrak{U}$ be a $\mathcal{E}$-free basis of $X$. For any $U_1$, ..., $U_l\in \mathfrak{U}$, $\mathcal{E}|_{\bigcap_{i=1}^l U_i}$ is free. By \cite{M2}, Thm. 1.2, $F^*$ is isomorphic on $\bigcap_{i=1}^l U_i$. Hence, $F^*$ satisfies (*) in Theorem \ref{1.1} and then $F^*$ is an isomorphism. We proved (\ref{3}).

The proofs of (\ref{1}) and (\ref{2}) are almost the same as that of (\ref{3}), except that we use   Proposition \ref{example2} and \cite{M1}, Thm. 1.3 instead of Proposition \ref{example1} and \cite{M2}, Thm. 1.2.
\end{proof}

\subsection{Comparison with the formula  given by Rao, S., Yang, S. and Yang, X.-D.}
\subsubsection{\emph{\textbf{The formula  given by Rao, S. et al.}}}
On compact complex manifolds, Rao, S., Yang, S. and Yang, X.-D. (\cite{RYY2}) gave an explicit formula of blow-ups for bundle-valued Dolbeault cohomology. We recall their construction as follows:

Suppose that $X$ is compact. For any $\alpha$ in $H^{p,q}(\widetilde{X},\pi^*\mathcal{E})$, by \cite{RYY2}, Lemma 3.3, there exist unique $\alpha^{p-i,q-i}$ in  $H^{p-i,q-i}(Y,i_Y^*\mathcal{E})$, for  $i=0,...,r-1$ such that
\begin{equation}\label{rep}
i_E^*\alpha =\sum_{i=0}^{r-1}h^i\cup (\pi|_E)^*\alpha^{p-i,q-i}.
\end{equation}
Define
\begin{equation}\label{def}
\phi:H^{p,q}(\widetilde{X},\pi^*\mathcal{E})\rightarrow H^{p,q}(X,\mathcal{E})\oplus \bigoplus_{i=1}^{r-1}H^{p-i,q-i}(Y,i_Y^*\mathcal{E})
\end{equation}
\begin{displaymath}
\quad\quad\quad\quad\quad\alpha\mapsto(\pi_*\alpha,\alpha^{p-1,q-1},...,\alpha^{p-r+1,q-r+1}).
\end{displaymath}
Clearly, $\phi$ is a linear map of vector spaces. Rao, S. et al. proved that $\phi$ is injective and
\begin{equation}\label{iso}
H^{p,q}(\widetilde{X},\pi^*\mathcal{E})\cong H^{p,q}(X,\mathcal{E})\oplus \bigoplus_{i=1}^{r-1}H^{p-i,q-i}(Y,i_Y^*\mathcal{E}),
\end{equation}
which imply that  $\phi$ is an isomorphism.

From now on, assume that $X$ is a (not necessarily compact) connected complex manifold. We notice that, for the definition (\ref{def}) of $\phi$, the compactness of $X$  is not necessary, since we can use Corollary \ref{proj-bun} in the present paper instead of Lemma 3.3 in \cite{RYY2} for  general cases. With the same proof in \cite{RYY2}, the injectivity of $\phi$ and the isomorphism (\ref{iso}) also hold for the noncompact manifold $X$. However, we don't know whether $\phi$ is isomorphic in this case, since the dimensions of cohomologies  are possibly infinite. In follows, through comparison of $\phi$ and the formula given in Theorem \ref{blow-up}, we will prove that $\phi$ is still isomorphic on general complex manifolds.
\subsubsection{\emph{\textbf{Relative Dolbeault sheaves}}}
Recall two sheaves defined in \cite{RYY} and their properties. For more details, we refer to \cite{RYY2}, Sec. 4.2 for compact cases, or \cite{M2}, Sec. 4 for general cases.

Let $X$ be a connected complex manifold and $i:Y\rightarrow X$ the inclusion of a closed complex submanifold $Y$ into $X$. For any $p$, $q$, set
$\mathcal{F}_{X,Y}^{p,q}=\textrm{ker}(\mathcal{A}_X^{p,q}\rightarrow i_*\mathcal{A}_Y^{p,q})$. We have an exact sequence of sheaves
\begin{equation}\label{relative}
0\rightarrow\mathcal{F}_{X,Y}^{p,q}\rightarrow\mathcal{A}_X^{p,q}\rightarrow i_*\mathcal{A}_Y^{p,q}\rightarrow 0,
\end{equation}
%\begin{displaymath}
%0\rightarrow(\mathcal{F}_{X,Y}^{p,\bullet},\bar{\partial})\rightarrow(\mathcal{A}_X^{p,\bullet},\bar{\partial})\rightarrow (i_*\mathcal{A}_Y^{p,\bullet},\bar{\partial})\rightarrow 0.
%\end{displaymath}
for any $p$. Define
%\begin{displaymath}
$\mathcal{F}_{X,Y}^p=\textrm{ker}(\bar{\partial}:\mathcal{F}_{X,Y}^{p,0}\rightarrow\mathcal{F}_{X,Y}^{p,1})$.
%\end{displaymath}
There is a resolution of soft sheaves of $\mathcal{F}_{X,Y}^p$
\begin{displaymath}
\xymatrix{
0\ar[r] &\mathcal{F}_{X,Y}^p\ar[r]^{i} &\mathcal{F}_{X,Y}^{p,0}\ar[r]^{\bar{\partial}} &\mathcal{F}_{X,Y}^{p,1}\ar[r]^{\bar{\partial}}&\cdots\ar[r]^{\bar{\partial}}&\mathcal{F}_{X,Y}^{p,n}\ar[r]&0.
}
\end{displaymath}
$\mathcal{F}_{X,Y}^p$ and $\mathcal{F}_{X,Y}^{p,q}$ are called the \emph{relative Dolbeault sheaves} of $X$ with respect to $Y$. For a locally free sheaf $\mathcal{E}$  of $\mathcal{O}_X$-modules of finite rank on $X$, we get an exact sequence of sheaves
\begin{displaymath}
\xymatrix{
 0\ar[r]&\mathcal{E}\otimes\mathcal{F}_{X,Y}^{p,q} \ar[r]& \mathcal{E}\otimes\mathcal{A}_{X}^{p,q} \ar[r]^{}& i_*(i^*\mathcal{E}\otimes\mathcal{A}_{Y}^{p,q}) \ar[r]& 0}
\end{displaymath}
by  (\ref{relative}) and the projection formula of sheaves. Since $\mathcal{E}\otimes\mathcal{F}_{X,Y}^{p,q}$ is $\Gamma$-acyclic,
\begin{equation}\label{relative-exact}
\small{\xymatrix{
 0\ar[r]&\Gamma(X,\mathcal{E}\otimes\mathcal{F}_{X,Y}^{p,q}) \ar[r]& \Gamma(X,\mathcal{E}\otimes\mathcal{A}_{X}^{p,q}) \ar[r]^{i^*}& \Gamma(Y,i^*\mathcal{E}\otimes\mathcal{A}_{Y}^{p,q}) \ar[r]& 0}}
\end{equation}
is exact.

Now, we go back to the cases of blow-ups. By \cite{M2}, Lemma 4.2,
\begin{displaymath}
R^q\pi_*(\pi^*\mathcal{E}\otimes\mathcal{F}_{\widetilde{X},E}^{p})=\mathcal{E}\otimes R^q\pi_*\mathcal{F}^p_{\widetilde{X},E}=\left\{
 \begin{array}{ll}
\mathcal{E}\otimes\mathcal{F}^p_{X,Y},&~q=0\\
 &\\
 0,&~q\geq1.
 \end{array}
 \right.
\end{displaymath}
By the Leray spectral sequence, $\pi^*$ induces an isomorhism
\begin{displaymath}
H^q(X,\mathcal{E}\otimes\mathcal{F}_{X,Y}^p)\cong H^q(\widetilde{X},\pi^*\mathcal{E}\otimes\mathcal{F}_{\widetilde{X},E}^p).
\end{displaymath}
For a given $p$, we have a commutative diagram of complexes
\begin{displaymath}
\small{\xymatrix{
 0\ar[r]&\Gamma(X,\mathcal{E}\otimes\mathcal{F}_{X,Y}^{p,\bullet})\ar[d]^{\pi^*} \ar[r]& \Gamma(X,\mathcal{E}\otimes\mathcal{A}_{X}^{p,\bullet})\ar[d]^{\pi^*} \ar[r]^{i_Y^*}& \Gamma(Y,i_Y^*\mathcal{E}\otimes\mathcal{A}_{Y}^{p,\bullet}) \ar[d]^{(\pi|_E)^*}\ar[r]& 0\\
 0\ar[r]&\Gamma(\widetilde{X},\pi^*\mathcal{E}\otimes\mathcal{F}_{\widetilde{X},E}^{p,\bullet})    \ar[r]^{}& \Gamma(\widetilde{X},\pi^*\mathcal{E}\otimes\mathcal{A}_{\widetilde{X}}^{p,\bullet})  \ar[r]^{i_E^*} &  \Gamma(E,i_E^*\pi^*\mathcal{E}\otimes\mathcal{A}_{E}^{p,\bullet})    \ar[r]& 0, }}
\end{displaymath}
where all the differentials are naturally induced by $\bar{\partial}$ and the two rows are exact by (\ref{relative-exact}). It induces a commutative diagram of long exact sequences
\begin{displaymath}
\tiny{\xymatrix{
    \cdots\ar[r]&H^q(X,\mathcal{E}\otimes\mathcal{F}_{X,Y}^{p}) \ar[d]^{\cong} \ar[r]& H^{p,q}(X,\mathcal{E}) \ar[d]^{\pi^*}\ar[r]^{i_Y^*}&  H^{p,q}(Y,i_Y^*\mathcal{E})\ar[d]^{(\pi|_E)^*}\ar[r]&H^{q+1}(X,\mathcal{E}\otimes\mathcal{F}_{X,Y}^{p})\ar[d]^{\cong}\ar[r]&\cdots\\
 \cdots \ar[r] & H^q(\widetilde{X},\pi^*\mathcal{E}\otimes\mathcal{F}_{\widetilde{X},E}^{p})\ar[r]& H^{p,q}(\widetilde{X},\pi^*\mathcal{E})       \ar[r]^{i_E^*}& H^{p,q}(E,i_E^*\pi^*\mathcal{E})     \ar[r] & H^{q+1}(\widetilde{X},\pi^*\mathcal{E}\otimes\mathcal{F}_{\widetilde{X},E}^{p})\ar[r]&\cdots,}}
\end{displaymath}
where $\pi^*$ and $(\pi|_E)^*$ are injective by Proposition \ref{inj-surj} and Corollary \ref{proj-bun} respectively. By the snake-lemma, $i_E^*$ induces an isomorphism
$\textrm{coker}\pi^*\tilde{\rightarrow}\textrm{coker}(\pi|_E)^*$. We  get a commutative diagram of exact sequences
\begin{equation}\label{commutative1}
\xymatrix{
 0\ar[r]&H^{p,q}(X,\mathcal{E})\ar[d]^{i_Y^*} \ar[r]^{\pi^*}& H^{p,q}(\widetilde{X},\pi^*\mathcal{E})\ar[d]^{i_E^*} \ar[r]& \textrm{coker}\pi^* \ar[d]^{\cong}\ar[r]& 0\\
 0\ar[r]&H^{p,q}(Y,i_Y^*\mathcal{E})       \ar[r]^{(\pi|_E)^*}& H^{p,q}(E,i_E^*\pi^*\mathcal{E})   \ar[r]^{} &  \textrm{coker} (\pi|_E)^*    \ar[r]& 0 },
\end{equation}
for any $p$, $q$.

\subsubsection{\emph{\textbf{Comparison of two formulas}}}
Denote
\begin{displaymath}\label{b-u-m}
\psi=\pi^*+\sum_{i=0}^{r-2}(i_E)_*\circ (h^i\cup)\circ (\pi|_E)^*:
H^{p,q}(X,\mathcal{E})\oplus \bigoplus_{i=0}^{r-2}H^{p-1-i,q-1-i}(Y,i_Y^*\mathcal{E})\rightarrow H^{p,q}(\widetilde{X},\pi^*\mathcal{E}).
\end{displaymath}
A natural question is:
\begin{quest}\label{prob.1}
\emph{Are $\phi$ and $\psi$ inverse to each other?}
\end{quest}

This question has a positive answer in following cases.
\begin{prop}\label{relation}
Suppose that
\begin{equation}\label{key2}
i_E^*i_{E*}\sigma=h\cup\sigma
\end{equation}
for any $\sigma\in H^{*,*}(E,i_E^*\pi^*\mathcal{E})$. Then $\phi$ and $\psi$ are inverse isomorphisms to each other.
\end{prop}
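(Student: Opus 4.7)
The plan is to verify $\phi\circ\psi=\mathrm{id}$ by a direct component-wise computation, and then invoke Theorem \ref{blow-up}, which already gives $\psi$ as an isomorphism, to conclude $\phi=\psi^{-1}$. Thus both directions follow from a single calculation. Fix a tuple $(\beta,\gamma_0,\ldots,\gamma_{r-2})$ with $\beta\in H^{p,q}(X,\mathcal{E})$ and $\gamma_i\in H^{p-1-i,q-1-i}(Y,i_Y^*\mathcal{E})$, and set
\begin{displaymath}
\alpha=\psi(\beta,\gamma_0,\ldots,\gamma_{r-2})=\pi^*\beta+\sum_{i=0}^{r-2}(i_E)_*\bigl(h^i\cup(\pi|_E)^*\gamma_i\bigr).
\end{displaymath}

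For the first component $\pi_*\alpha$: since $\pi$ is a proper modification, $\pi_*\pi^*\beta=\beta$; and the functoriality $\pi_*(i_E)_*=(i_Y)_*(\pi|_E)_*$ combined with the projection formula (\ref{pro-formula2}) reduces $(\pi|_E)_*\bigl(h^i\cup(\pi|_E)^*\gamma_i\bigr)$ to $(\pi|_E)_*(h^i)\cup\gamma_i$, which vanishes for $0\le i\le r-2$ because $\pi|_E\colon\mathbb{P}(N_{Y/X})\to Y$ has $\mathbb{CP}^{r-1}$-fibers on which $h$ restricts to the hyperplane class, so fiber integration annihilates $h^i$ for $i<r-1$. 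Hence $\pi_*\alpha=\beta$.

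For the remaining components I would compute $i_E^*\alpha$ using $i_E^*\pi^*=(\pi|_E)^*i_Y^*$ together with the standing hypothesis (\ref{key2}):
\begin{displaymath}
i_E^*\alpha=(\pi|_E)^*i_Y^*\beta+\sum_{i=0}^{r-2}h^{i+1}\cup(\pi|_E)^*\gamma_i=\sum_{j=0}^{r-1}h^j\cup(\pi|_E)^*\delta_j,
\end{displaymath}
with $\delta_0=i_Y^*\beta$ and $\delta_j=\gamma_{j-1}$ for $1\le j\le r-1$. By the uniqueness of this decomposition, guaranteed by Corollary \ref{proj-bun}~(2), one reads off $\alpha^{p-j,q-j}=\delta_j$; in particular $\alpha^{p-j,q-j}=\gamma_{j-1}$ for $1\le j\le r-1$. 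Together with $\pi_*\alpha=\beta$ this gives $\phi(\alpha)=(\beta,\gamma_0,\ldots,\gamma_{r-2})$, i.e., $\phi\circ\psi=\mathrm{id}$, and Theorem \ref{blow-up} then forces $\phi=\psi^{-1}$.

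The computation is entirely mechanical once (\ref{key2}) is granted, so no genuine obstacle arises inside the statement itself; the real work in applying Proposition \ref{relation} lies in verifying the self-intersection identity $i_E^*i_{E*}\sigma=h\cup\sigma$ in any particular setting of interest, which will presumably be addressed separately for the bundle-valued Dolbeault and local-system cases.
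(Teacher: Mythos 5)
Your proof is correct, but it runs the composition in the opposite direction from the paper, and this makes it a genuinely lighter argument. The paper verifies $\psi\circ\phi=\mathrm{id}$: starting from $\gamma\in H^{p,q}(\widetilde{X},\pi^*\mathcal{E})$ it uses (\ref{key2}) to show that $\gamma-\sum_i(i_E)_*(h^i\cup(\pi|_E)^*\alpha^{p-1-i,q-1-i})$ restricts to $E$ inside the image of $(\pi|_E)^*$, and then must invoke the commutative diagram (\ref{commutative1}) --- i.e.\ the isomorphism $\mathrm{coker}\,\pi^*\cong\mathrm{coker}\,(\pi|_E)^*$ built from the relative Dolbeault sheaves and the Leray spectral sequence --- to lift that difference to $\pi^*\gamma^{p,q}$, and finally pins down $\gamma^{p,q}=\pi_*\gamma$ by the projection formula. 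You instead verify $\phi\circ\psi=\mathrm{id}$, which needs only the functoriality identities $\pi\circ i_E=i_Y\circ(\pi|_E)$, the projection formula, the degree-reason vanishing $(\pi|_E)_*h^i=0$ for $i\le r-2$, hypothesis (\ref{key2}), and the \emph{uniqueness} half of the projective bundle decomposition from Corollary \ref{proj-bun}~(2); the relative Dolbeault sheaf machinery is bypassed entirely. Since $\psi$ is bijective by Theorem \ref{blow-up}, either one-sided identity forces $\phi=\psi^{-1}$, so both routes are complete. What the paper's direction buys is that it exhibits directly how an arbitrary class on $\widetilde{X}$ decomposes (and it reuses (\ref{commutative1}), which the paper has already established for other purposes); what yours buys is brevity and fewer prerequisites. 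Two cosmetic points: $h=[t]=c_1(\mathcal{O}_E(-1))$ restricts on fibers to the \emph{negative} of the hyperplane class (the paper computes $(\pi|_E)_*h^{r-1}=(-1)^{r-1}$), though this does not affect the vanishing you use, which is purely by degree; and $\pi_*\pi^*\beta=\beta$ is most cleanly justified by the projection formula together with $\pi_*1=1$ for the degree-one proper map $\pi$.
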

\begin{proof}
For any $\gamma\in H^{p,q}(\widetilde{X},\pi^*\mathcal{E})$, suppose that
\begin{displaymath}
\phi(\gamma)=(\beta^{p,q},\alpha^{p-1,q-1},...,\alpha^{p-r+1,q-r+1}),
\end{displaymath}
where $\beta^{p,q}\in H^{p,q}(X,\mathcal{E})$ and $\alpha^{p-i,q-i}\in H^{p-i,q-i}(Y,i_Y^*\mathcal{E})$ for $i=1,...,r-1$. Then $\pi_*\gamma=\beta^{p,q}$ and there exists $\alpha^{p,q}\in H^{p,q}(Y,i_Y^*\mathcal{E})$ such that
$i_E^*\gamma=\sum_{i=0}^{r-1}h^i\cup(\pi|_E)^*\alpha^{p-i,q-i}$. By (\ref{key2}),
\begin{displaymath}
i_E^*\left[\gamma-\sum_{i=0}^{r-2}(i_E)_*\left(h^i\cup(\pi|_E)^*\alpha^{p-1-i,q-1-i}\right)\right]=(\pi|_E)^*\alpha^{p,q},
\end{displaymath}
which is zero in $\textrm{coker}(\pi|_E)^*$. By the commutative diagram (\ref{commutative1}),
\begin{equation}\label{key2-1}
\gamma-\sum_{i=0}^{r-2}(i_E)_*\left(h^i\cup(\pi|_E)^*\alpha^{p-1-i,q-1-i})\right)=\pi^*\gamma^{p,q},
\end{equation}
for some $\gamma^{p,q}\in H^{p,q}(X,\mathcal{E})$. Push out (\ref{key2-1}) by $\pi_*$, $\gamma^{p,q}=\pi_*\gamma=\beta^{p,q}$ by the projection formula (\ref{pro-formula1}), where we used the fact that $(\pi|_E)_*h^i=0$ for $0\leq i\leq r-2$. Hence
\begin{displaymath}\label{b-u-m}
\gamma=\psi(\beta^{p,q},\alpha^{p-1,q-1},...,\alpha^{p-r+1,q-r+1}).
\end{displaymath}
So $\psi\phi=\textrm{id}$. By Theorem \ref{blow-up}, $\psi$ is isomorphic, and then, $\phi$ is inverse to $\psi$.
\end{proof}

\begin{prop}\label{key}
If $\mathcal{E}$ is a free sheaf of $\mathcal{O}_X$-modules of finite rank on $X$ and one of the following conditions is satisfied\emph{:}

$(1)$ $X$ and $Y$ are compact complex manifolds satisfying the $\partial\overline{\partial}$-lemma,

$(2)$ $Y$ is a Stein manifold, \\
we have $i_E^*i_{E*}\sigma=h\cup\sigma$, for $\sigma\in H^{*,*}(E,i_E^*\pi^*\mathcal{E})$. Moreover, for the two cases, $\phi$ and $\psi$ are inverse isomorphisms to each other.
\end{prop}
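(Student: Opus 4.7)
The plan is to deduce (\ref{key2}) from the classical self-intersection identity for the smooth divisor $E\subset\widetilde X$ and then invoke Proposition \ref{relation} for the ``Moreover'' clause. First, since $\mathcal{E}$ is assumed free, a global frame $e_1,\dots,e_m$ of $\mathcal{E}$ pulls back to global frames of $\pi^*\mathcal{E}$ and of $i_E^*\pi^*\mathcal{E}$; because both $i_E^*\circ i_{E*}$ and $h\cup(\cdot)$ act component-wise with respect to these frames (using the projection formula (\ref{pro-formula1})), it suffices to verify $i_E^*i_{E*}\sigma=h\cup\sigma$ in the untwisted case $\mathcal{E}=\mathcal{O}_X$, i.e.\ for $\sigma\in H^{p,q}(E,\mathbb{C})$.

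To treat the untwisted case I would apply the Poincar\'e--Lelong formula to the canonical section $s$ of $\mathcal{O}_{\widetilde X}(E)$ whose zero divisor is $E$: for any hermitian metric $\|\cdot\|$ on $\mathcal{O}_{\widetilde X}(E)$,
\begin{displaymath}
[E]=c_1(\mathcal{O}_{\widetilde X}(E),\|\cdot\|)+\tfrac{i}{\pi}\partial\bar\partial\log\|s\|
\end{displaymath}
as currents on $\widetilde X$. Picking a $\bar\partial$-closed representative of $\sigma$, extending it $\bar\partial$-closedly to a tubular neighborhood of $E$, and multiplying by a smooth cut-off produces a global form $\tilde\sigma$ on $\widetilde X$ such that $\tilde\sigma\wedge [E]$ represents $i_{E*}\sigma$ in $H^{p+1,q+1}(\widetilde X)$. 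Substituting, applying $i_E^*$, and using $\mathcal{O}_{\widetilde X}(E)|_E=\mathcal{O}_E(-1)$ (so $c_1(\mathcal{O}_{\widetilde X}(E))|_E=h$), the target identity reduces to
\begin{displaymath}
i_E^*i_{E*}\sigma-h\cup\sigma=\tfrac{i}{\pi}\,i_E^*\bigl(\tilde\sigma\wedge\partial\bar\partial\log\|s\|\bigr)\quad\text{in }H^{p+1,q+1}(E),
\end{displaymath}
so the entire task becomes showing that the right-hand error is a coboundary.

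Under hypothesis (1), both $\widetilde X$ and $E=\mathbb{P}(N_{Y/X})$ inherit the $\partial\bar\partial$-lemma from $X$ and $Y$ (via Leray--Hirsch for the projective bundle $E\to Y$ and the known stability of the $\partial\bar\partial$-property under blow-ups along $\partial\bar\partial$-submanifolds, see \cite{YY,RYY2}); hence every $\partial\bar\partial$-exact form is $\bar\partial$-exact, and the error vanishes in Dolbeault cohomology. Under hypothesis (2), the Stein property of $Y$ supplies a holomorphic tubular neighborhood theorem: a neighborhood of $Y$ in $X$ is biholomorphic to a neighborhood of the zero section in $N_{Y/X}$, and correspondingly a neighborhood of $E$ in $\widetilde X$ is biholomorphic to a neighborhood of the zero section of the line bundle $\mathcal{O}_E(-1)$; in this linear model the self-intersection formula reduces to the Thom-class identity for a holomorphic line bundle (compute it with a Chern form representative of $c_1(\mathcal{O}_E(-1))$ that has a global potential). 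Once (\ref{key2}) is in hand, Proposition \ref{relation} directly gives the mutual-inverse conclusion. The main obstacle will be making the local self-intersection computation near $E$ propagate to a global cohomological identity on $E$ in case (2), since $X$ itself need not be Stein; in particular, choosing the cut-off extension $\tilde\sigma$ together with a hermitian metric on $\mathcal{O}_{\widetilde X}(E)$ whose pullback potential $\log\|s\|$ can be globally adjusted so that $\tilde\sigma\wedge\partial\bar\partial\log\|s\|$ restricts to a $\bar\partial$-exact form on $E$ is the step that requires the most care.
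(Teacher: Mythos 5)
Your overall architecture (reduce to $\mathcal{E}=\mathcal{O}_X$ via a global frame, establish (\ref{key2}), then invoke Proposition \ref{relation}) matches the paper, but the Poincar\'e--Lelong route you take to (\ref{key2}) has gaps beyond the one your closing caveat flags. First, the error current $\tilde\sigma\wedge\partial\bar\partial\log\|s\|$ is \emph{not} $\partial\bar\partial$-exact unless $\tilde\sigma$ is $d$-closed: integrating by parts leaves a term of the form $\bar\partial\tilde\sigma\wedge\partial\log\|s\|$, so in case (1) the implication ``$\partial\bar\partial$-exact $\Rightarrow\bar\partial$-exact'' does not apply to it as stated. Second, the step ``extend $\sigma$ $\bar\partial$-closedly to a tubular neighborhood of $E$'' is unjustified: a neighborhood of the exceptional divisor need not admit a holomorphic retraction onto $E$ (it is not biholomorphic to a neighborhood of the zero section of $N_{E/\widetilde X}$ in general), and with only a smooth extension the term $\bar\partial\tilde\sigma\wedge\partial\log\|s\|$ is no longer smooth near $E$, so it cannot be restricted there. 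Relatedly, $i_E^*\bigl(\tilde\sigma\wedge[E]\bigr)$ and $i_E^*\bigl(\tilde\sigma\wedge\partial\bar\partial\log\|s\|\bigr)$ are not defined at the level of currents (you are restricting a current to its own singular support); one must first pass to a smooth representative of $i_{E*}\sigma$ before applying $i_E^*$.

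The paper's proof avoids all of this. For case (1) it notes that the $\partial\bar\partial$-lemma passes from $X$, $Y$ to $\widetilde X$ and $E$, so the natural maps from Bott--Chern cohomology to Dolbeault and to de Rham cohomology are isomorphisms; the identity $i_E^*i_{E*}\sigma=[t]\cup\sigma$ is then imported from de Rham cohomology via \cite{LM}, Lemma 12.2 (using $[t]=c_1(\mathcal{O}_E(-1))=e(E)$) and transferred to Dolbeault cohomology through the commuting squares relating the three theories --- no currents, cut-offs or potentials appear. For case (2) it simply cites \cite{M2}, Lemma 4.4, which is where the Stein hypothesis on $Y$ is actually exploited; your ``linear model'' sketch is precisely the part you flag as incomplete, and it is. If you wish to salvage your route, the essential missing ingredients are: (i) a proof that a $\bar\partial$-closed extension of $\sigma$ exists near $E$ (this is where hypotheses (1) or (2) must enter), and (ii) replacing the $\partial\bar\partial$-exactness claim by the integration-by-parts identity $[\tilde\sigma\wedge\partial\bar\partial u]=\pm[\bar\partial\tilde\sigma\wedge\partial u]$ together with the observation that, once (i) holds, $\bar\partial\tilde\sigma\wedge\partial u$ is a smooth $\bar\partial$-closed form vanishing identically near $E$, whence its pullback to $E$ is zero.
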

\begin{proof}
We just need  to prove it for the case $\mathcal{E}=\mathcal{O}_X$.

$(1)$ For a complex manifold $Z$, let $I_Z:H_{\textrm{BC}}^{p,q}(Z)\rightarrow H^{p,q}(Z)$ and  $J_Z:\bigoplus_{p+q=k}H_{\textrm{BC}}^{p,q}(Z)\rightarrow H^{k}(Z,\mathbb{C})$ be the natural maps for any $p$, $q$, $k$, where $H_{\textrm{BC}}^{*,*}(\bullet)$ denote the Bott-Chern cohomology. $X$ and $Y$ satisfy the $\partial\overline{\partial}$-lemma, so do $E=\mathbb{P}(N_{Y/X})$ and $\widetilde{X}$  (seeing \cite{ASTT}, Corollary 3  and \cite{St2}, Corollary 26). Then $I_E$, $J_E$, $I_{\widetilde{X}}$ and $J_{\widetilde{X}}$ are isomorphisms (seeing \cite{DGMS}, Remark (5.16)). We know $\mathcal{O}_E(-1)=\mathcal{O}_{\widetilde{X}}(E)|_E$,  so $[t]=c_1(\mathcal{O}_E(-1))=e(E)\in H^2(E,\mathbb{C})$, where $e(E)$ is the Euler class of $E$. By \cite{LM}, Lemma 12.2, $i_E^*i_{E*}\sigma=[t]\cup\sigma$ for any $\sigma\in H^*(\widetilde{X},\mathbb{C})$. The commutative diagrams
\begin{displaymath}
\xymatrix{
H^{p,q}_{\textrm{BC}}(E)\ar[d]^{i_{E*}} \ar[r]_{I_E}& H^{p,q}(E) \ar[d]^{i_{E*}} & \bigoplus_{p+q=k}H^{p,q}_{\textrm{BC}}(E) \ar[d]^{i_{E*}}\ar[r]_{
\qquad J_E}& H^k(E,\mathbb{C})\ar[d]^{i_{E*}}\\
H^{p+1,q+1}_{\textrm{BC}}(\widetilde{X})\ar[d]^{i_E^*} \ar[r]_{I_{\widetilde{X}}}& H^{p+1,q+1}(\widetilde{X}) \ar[d]^{i_E^*} & \bigoplus_{p+q=k+2}H^{p,q}_{\textrm{BC}}(\widetilde{X}) \ar[d]^{i_E^*}\ar[r]_{\qquad J_{\widetilde{X}}}& H^{k+2}(\widetilde{X},\mathbb{C})\ar[d]^{i_E^*}\\
H^{p+1,q+1}_{\textrm{BC}}(E)      \ar[r]_{I_E}& H^{p+1,q+1}(E)      & \bigoplus_{p+q=k+2}H^{p,q}_{\textrm{BC}}(E)     \ar[r]_{\qquad J_E}& H^{k+2}(E,\mathbb{C})}
\end{displaymath}
imply the case $(1)$.

$(2)$ It is just \cite{M2}, Lemma 4.4.
\end{proof}

Begining with $i=r-1$, inductively define polynomials $P_j^{i}(T_1,....,T_{r-2})\in \mathbb{Z}[T_1,....,T_{r-2}]$, by recursion relations
\begin{equation}\label{Polynomial-ind}
P_j^{i}(T_1,....,T_{r-2})= \left\{
 \begin{array}{ll}
(-1)^r\sum_{k=i+1}^{r-1-j}T_{k-i}P_j^{k}(T_1,....,T_{r-2}),&~j<r-1-i\\
 &\\
 (-1)^{r-1},&~j=r-1-i,
 \end{array}
 \right.
\end{equation}
where $1\leq i\leq r-1$ and $0\leq j\leq r-1-i$.
For example,
\begin{displaymath}
P_0^{r-1}=(-1)^{r-1},\mbox{ } P_1^{r-2}=(-1)^{r-1},\mbox{ } P_0^{r-2}=-T_1 \ldots.
\end{displaymath}
Suppose
\begin{displaymath}
P_j^{i}(T_1,....,T_{r-2})=\sum_{d_1,\cdots,d_{r-2}} a_{j,d_1,\cdots,d_{r-2}}^{i}T^{d_1}_1,....,T^{d_{r-2}}_{r-2}.
\end{displaymath}
By the induction for $i+j$, it is easily checked that
\begin{equation}\label{degree}
\sum_{k=1}^{r-2}kd_k=r-1-i-j,
\end{equation}
for every nonzero term  $a_{j,d_1,\cdots,d_{r-2}}^{i}T^{d_1}_1,....,T^{d_{r-2}}_{r-2}$ of $P_j^{i}(T_1,....,T_{r-2})$.

We can explicitly represent $\phi(\alpha)$ by $\alpha$ as follows.
\begin{lem}\label{represent}
For any $\alpha\in H^{p,q}(\widetilde{X},\pi^*\mathcal{E})$, if
\begin{displaymath}
\phi(\alpha)=(\pi_*\alpha,\alpha^{p-1,q-1},...,\alpha^{p-r+1,q-r+1}),
\end{displaymath}
then
\begin{equation}\label{explicit}
\alpha^{p-i,q-i}=\sum_{j=0}^{r-1-i}\left[P_j^{i}((\pi|_E)_*h^r,....,(\pi|_E)_*h^{2r-3})\right]\cup(\pi|_E)_*(h^j\cup i_E^*\alpha),
\end{equation}
for $1\leq i\leq r-1$.
\end{lem}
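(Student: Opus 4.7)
The proof proceeds by downward induction on $i$, starting from the defining decomposition \eqref{rep}. Multiplying \eqref{rep} by $h^j$ and pushing forward via $(\pi|_E)_*$, the projection formula \eqref{pro-formula1} yields
\begin{displaymath}
(\pi|_E)_*(h^j \cup i_E^*\alpha) \;=\; \sum_{i=0}^{r-1} \bigl((\pi|_E)_* h^{i+j}\bigr) \cup \alpha^{p-i,q-i},
\end{displaymath}
so everything reduces to evaluating $(\pi|_E)_* h^k$. Since $\pi|_E : \mathbb{P}(N_{Y/X}) \to Y$ has $(r-1)$-dimensional projective fibers, $(\pi|_E)_* h^k = 0$ for $k < r-1$; and because $h = c_1(\mathcal{O}_E(-1)) = -c_1(\mathcal{O}_E(1))$, fiberwise integration on $\mathbb{P}^{r-1}$ gives $(\pi|_E)_* h^{r-1} = (-1)^{r-1}$. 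Writing $T_k := (\pi|_E)_* h^{r-1+k}$ for $k \geq 1$ as in the statement, the identity above therefore collapses to the triangular system
\begin{displaymath}
(\pi|_E)_*(h^{r-1-i} \cup i_E^*\alpha) \;=\; (-1)^{r-1}\,\alpha^{p-i,q-i} \;+\; \sum_{l=1}^{r-1-i} T_l \cup \alpha^{p-i-l,q-i-l}, \qquad 1 \leq i \leq r-1.
\end{displaymath}

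The base case $i = r-1$ reads $\alpha^{p-r+1,q-r+1} = (-1)^{r-1}(\pi|_E)_*(i_E^*\alpha)$, matching $P_0^{r-1} = (-1)^{r-1}$. For the inductive step, assume \eqref{explicit} holds for all indices strictly greater than $i$; substituting into the triangular system and solving for $\alpha^{p-i,q-i}$, one collects the coefficient of each $(\pi|_E)_*(h^j \cup i_E^*\alpha)$. The coefficient of $(\pi|_E)_*(h^{r-1-i} \cup i_E^*\alpha)$ is $(-1)^{r-1} = P_{r-1-i}^i$, and for $0 \leq j < r-1-i$ the coefficient that emerges is
\begin{displaymath}
-(-1)^{r-1}\sum_{l=1}^{r-1-i-j} T_l \cup P_j^{i+l}(T) \;=\; (-1)^{r}\sum_{l=1}^{r-1-i-j} T_l \cup P_j^{i+l}(T),
\end{displaymath}
which, after the reindexing $k = i+l$, is exactly the recursion \eqref{Polynomial-ind} defining $P_j^i(T_1,\ldots,T_{r-2})$. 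This closes the induction and establishes \eqref{explicit}.

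The main obstacle is purely combinatorial bookkeeping: one must carefully track the sign $(-1)^{r-1}$ coming from $(\pi|_E)_* h^{r-1}$ through the inversion of the triangular system, and align the substitution variable $l \geq 1$ with the summation variable $k = i+l$ appearing in \eqref{Polynomial-ind}. As a consistency check, each $T_l$ has bidegree $(l,l)$ on $Y$ while $(\pi|_E)_*(h^j \cup i_E^*\alpha)$ has bidegree $(p-(r-1-j),q-(r-1-j))$, so matching the bidegree of $\alpha^{p-i,q-i}$ forces $\sum_l l\,d_l = r-1-i-j$ for every monomial $T_1^{d_1}\cdots T_{r-2}^{d_{r-2}}$ appearing in $P_j^i$, which is exactly the degree identity \eqref{degree}; this is automatic from the recursion, but provides a welcome sanity check.
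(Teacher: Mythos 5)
Your proof is correct and follows essentially the same route as the paper's: establish $(\pi|_E)_*h^k=0$ for $k\le r-2$ and $(\pi|_E)_*h^{r-1}=(-1)^{r-1}$, cup \eqref{rep} with powers of $h$ and push forward by $(\pi|_E)_*$, then invert the resulting triangular system by downward induction on $i$, matching the recursion \eqref{Polynomial-ind}. The only differences are cosmetic: you write out the triangular system explicitly and add the bidegree consistency check for \eqref{degree}, which the paper verifies separately.
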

\begin{proof}
By the reason of degrees, $(\pi|_E)_*h^i=0$, for $0\leq i\leq r-2$.
Moreover, $(\pi|_E)_*h^{r-1}=(-1)^{r-1}$ in $H^{0,0}(Y)=\mathcal{O}(Y)$. Actually, $(\pi|_E)_*t^{r-1}$ is a $\textrm{d}$-closed smooth $0$-form on $Y$, hence a constant.  For any $y$ in $Y$,
\begin{displaymath}
\begin{aligned}
(\pi|_E)_*t^{r-1}= &\int_{E_y}t^{r-1}|_{E_y}\\
= &\int_{E_y}[\frac{i}{2\pi}\Theta(\mathcal{O}_{E_y}(-1))]^{r-1}\\
= &\int_{\mathbb{P}^{r-1}}c_1(\mathcal{O}_{\mathbb{P}^{r-1}}(-1))^{r-1}\\
= & (-1)^{r-1}.
\end{aligned}
\end{displaymath}
Push out (\ref{rep}) by $(\pi|_E)_*$, we get $\alpha^{p-r+1,q-r+1}=(-1)^{r-1}(\pi|_E)_*i_E^*\alpha$ using the projection formula (\ref{pro-formula2}). (\ref{explicit}) holds for $i=r-1$. Suppose that (\ref{explicit}) holds for any $i\geq l+1$. Cup product with (\ref{rep}) and $h^{r-1-l}$, and then push it out by $(\pi|_E)_*$, we get
\begin{displaymath}
\alpha^{p-l,q-l}=(-1)^{r-1}(\pi|_E)_*(h^{r-1-l}\cup i_E^*\alpha)+(-1)^r\sum_{k=l+1}^{r-1}(\pi|_E)_*(h^{k+r-1-l})\cup\alpha^{p-k,q-k}.
\end{displaymath}
By (\ref{Polynomial-ind}) and the inductive hypothesis, (\ref{explicit}) holds for $i=l$. We complete the proof.
\end{proof}

\begin{thm}\label{important}
$\phi$ is an isomorphism for any connected complex manifold $X$.
\end{thm}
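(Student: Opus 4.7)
The plan is to establish that $\phi$ is an isomorphism directly, by exhibiting it as the direct sum of two isomorphisms using the canonical splitting of the short exact sequence in diagram (\ref{commutative1}). This bypasses the identity $i_E^* i_{E*} \sigma = h \cup \sigma$, which Proposition \ref{key} only supplies under restrictive hypotheses on $\mathcal{E}$ and $Y$, so no local-to-global Mayer-Vietoris reduction is needed.

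First, I would observe that $\pi_* \pi^* = \mathrm{id}$ on $H^{p,q}(X, \mathcal{E})$. Since $\pi \colon \widetilde{X} \to X$ is a proper modification of degree one, the defining integral description of $\pi_*$ on currents gives $\pi_*(1) = 1$ in $H^{0,0}(X, \mathcal{O}_X)$, and the projection formula (\ref{pro-formula2}) then yields $\pi_*(\pi^*\beta) = \pi_*(1) \cup \beta = \beta$. Consequently $\pi^*$ is injective with retraction $\pi_*$, and we obtain the canonical direct-sum decomposition
\begin{equation*}
H^{p,q}(\widetilde{X}, \pi^*\mathcal{E}) = \pi^* H^{p,q}(X, \mathcal{E}) \oplus \ker \pi_*.
\end{equation*}

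Next, I would identify $\ker \pi_*$ by chaining three natural isomorphisms: the splitting identifies $\ker \pi_* \cong \mathrm{coker}(\pi^*)$; the snake-lemma consequence of (\ref{commutative1}) identifies $\mathrm{coker}(\pi^*) \cong \mathrm{coker}((\pi|_E)^*)$ via $i_E^*$; and Corollary \ref{proj-bun}(2) identifies $\mathrm{coker}((\pi|_E)^*)$ with $\bigoplus_{i=1}^{r-1} H^{p-i, q-i}(Y, i_Y^*\mathcal{E}) \cdot h^i$. The composite sends $\alpha \in \ker \pi_*$ to the tuple $(\alpha^{p-1, q-1}, \ldots, \alpha^{p-r+1, q-r+1})$ of Leray-Hirsch coefficients determined by (\ref{rep}).

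Finally, I would verify that $\phi$ is precisely this direct sum of isomorphisms. For $\alpha = \pi^*\beta$, the first component of $\phi(\alpha)$ is $\pi_* \pi^* \beta = \beta$, while $i_E^* \pi^* \beta = (\pi|_E)^* i_Y^*\beta$ lies entirely in the $h^0$-summand, so $\alpha^{p-i, q-i} = 0$ for $i \geq 1$, giving $\phi(\pi^*\beta) = (\beta, 0, \ldots, 0)$. For $\alpha \in \ker \pi_*$, the first component vanishes, and the remaining components are exactly the image of $\alpha$ under the isomorphism constructed above. Thus $\phi$ is the direct sum of $\mathrm{id}_{H^{p,q}(X,\mathcal{E})}$ with an isomorphism, hence itself an isomorphism. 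The only point of genuine verification will be confirming that the Leray-Hirsch coefficients appearing in (\ref{rep}) align with the snake-lemma isomorphism from (\ref{commutative1}); this is a direct diagram-chase using the uniqueness of the Leray-Hirsch decomposition.
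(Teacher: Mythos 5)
Your argument is correct, and it takes a genuinely different route from the paper. The paper proves Theorem \ref{important} by packaging the components of $\phi$ (via the explicit push--pull formulas of Lemma \ref{represent}) into a morphism $\Phi^*$ of M-V systems of cdp presheaves and then invoking Theorem \ref{1.1}, with Proposition \ref{key} $(2)$ (the identity $i_E^*i_{E*}\sigma=h\cup\sigma$ over Stein opens, hence $\phi=\psi^{-1}$ locally) supplying the local hypothesis $(*)$. You instead argue globally: the retraction $\pi_*\pi^*=\mathrm{id}$ (projection formula plus $\pi_*1=1$ for a degree-one modification) splits $H^{p,q}(\widetilde{X},\pi^*\mathcal{E})=\pi^*H^{p,q}(X,\mathcal{E})\oplus\ker\pi_*$, the cokernel comparison coming from (\ref{commutative1}) together with Corollary \ref{proj-bun} $(2)$ identifies $\ker\pi_*\cong\mathrm{coker}\,\pi^*\cong\mathrm{coker}\,(\pi|_E)^*\cong\bigoplus_{i=1}^{r-1}H^{p-i,q-i}(Y,i_Y^*\mathcal{E})$, and $\phi$ is visibly block-diagonal for these two decompositions (the functoriality $i_E^*\pi^*=(\pi|_E)^*i_Y^*$ kills the off-diagonal block on $\pi^*H^{p,q}(X,\mathcal{E})$, and $\pi_*$ kills it on $\ker\pi_*$). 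All the ingredients you use --- diagram (\ref{commutative1}), the injectivity of $\pi^*$ and $(\pi|_E)^*$, and the projective bundle formula --- are established in the paper without compactness, so nothing is missing. Your proof is shorter and makes transparent why the abstract isomorphism (\ref{iso}) is actually realized by the specific map $\phi$; what it does not give, and what the paper's detour through Propositions \ref{relation} and \ref{key} partially does, is information on Question \ref{prob.1}, i.e.\ whether $\phi$ and $\psi$ are mutually inverse (the M-V route at least yields this over Stein open sets). The one step you flag as needing verification --- that the Leray--Hirsch coefficients of (\ref{rep}) compute the snake-lemma isomorphism --- is indeed immediate, since the image of $(\pi|_E)^*$ is exactly the $h^0$-summand of the Leray--Hirsch decomposition, so passing to $\mathrm{coker}\,(\pi|_E)^*$ is precisely discarding the $i=0$ coefficient.
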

\begin{proof}
Fix an integer $p$. By Proposition \ref{example1} $(3)$,
\begin{displaymath}
i_E^*:\mathcal{H}^{p,*}_{\widetilde{X}}(\pi^*\mathcal{E})\rightarrow i_{E*}\mathcal{H}_{E}^{p,*}(i_E^*\pi^*\mathcal{E})
\end{displaymath}
\begin{displaymath}
(\pi|_E)_*:(\pi|_E)_*\mathcal{H}_{E}^{p+j,*}((\pi|_E)^*i_Y^*\mathcal{E})[j]\rightarrow\mathcal{H}_Y^{p+j-r+1,*}(i_Y^*\mathcal{E})[j-r+1],
\end{displaymath}
and, by Proposition \ref{example1} $(2)$,
\begin{displaymath}
h^j\cup:\mathcal{H}_{E}^{p,*}(i_E^*\pi^*\mathcal{E})\rightarrow \mathcal{H}_{E}^{p+j,*}(i_E^*\pi^*\mathcal{E})[j]
\end{displaymath}
\begin{displaymath}
\prod_{k=1}^{r-2}[(\pi|_E)_*h^{k+r-1}]^{d_k}\cup:
\mathcal{H}_Y^{p+j-r+1,*}(i_Y^*\mathcal{E})[j-r+1]\rightarrow \mathcal{H}_Y^{p-i,*}(i_Y^*\mathcal{E})[-i]
\end{displaymath}
are morphisms of M-V systems of cdp presheaves, where $\sum_{k=1}^{r-2}kd_k=r-1-i-j$. Then
\begin{equation}\label{d}
i_E^*:\pi_*\mathcal{H}^{p,*}_{\widetilde{X}}(\pi^*\mathcal{E})\rightarrow \pi_*i_{E*}\mathcal{H}_{E}^{p,*}(i_E^*\pi^*\mathcal{E}),
\end{equation}
\begin{equation}\label{e}
\begin{aligned}
h^j\cup:\pi_*i_{E*}\mathcal{H}_{E}^{p,*}(i_E^*\pi^*\mathcal{E})\rightarrow &\pi_*i_{E*}\mathcal{H}_{E}^{p+j,*}(i_E^*\pi^*\mathcal{E})[j]\\
= &i_{Y*}(\pi|_E)_*\mathcal{H}_{E}^{p+j,*}((\pi|_E)^*i_Y^*\mathcal{E})[j],
\end{aligned}
\end{equation}
\begin{equation}\label{f}
(\pi|_E)_*:i_{Y*}(\pi|_E)_*\mathcal{H}_{E}^{p+j,*}((\pi|_E)^*i_Y^*\mathcal{E})[j]\rightarrow i_{Y*}\mathcal{H}_Y^{p+j-r+1,*}(i_Y^*\mathcal{E})[j-r+1],
\end{equation}
\begin{equation}\label{g}
\prod_{k=1}^{r-2}[(\pi|_E)_*h^{k+r-1}]^{d_k}\cup:i_{Y*}\mathcal{H}_Y^{p+j-r+1,*}(i_Y^*\mathcal{E})[j-r+1]\rightarrow i_{Y*}\mathcal{H}_Y^{p-i,*}(i_Y^*\mathcal{E})[-i]
\end{equation}
are M-V morphisms by Proposition \ref{elem} $(1)$, $(3)$. Combining (\ref{d})-(\ref{g}), by Section 2.2  $(5')$, $(6')$,
\begin{displaymath}
\Phi^{p-i,*-i}:=\sum_{j=0}^{r-1-i}\left[P_j^{i}((\pi|_E)_*h^r,....,(\pi|_E)_*h^{2r-3})\right]\cup(\pi|_E)_*\circ(h^j\cup)\circ i_E^*
\end{displaymath}
is a morphism $\pi_*\mathcal{H}^{p,*}_{\widetilde{X}}(\pi^*\mathcal{E})\rightarrow i_{Y*}\mathcal{H}_Y^{p-i,*}(i_Y^*\mathcal{E})[-i]$ of M-V systems of cdp presheaves on $X$ for any $1\leq i\leq r-1$,  so is
\begin{displaymath}
\Phi^*:\pi_*\mathcal{H}^{p,*}_{\widetilde{X}}(\pi^*\mathcal{E})\rightarrow \mathcal{H}^{p,*}_X(\mathcal{E})\oplus \bigoplus_{i=1}^{r-1}i_{Y*}\mathcal{H}_Y^{p-i,*}(i_Y^*\mathcal{E})[-i],
\end{displaymath}
by Section 2.2 $(2)$, $(4')$, where
\begin{displaymath}
\Phi^*=(\pi_*,\Phi^{p-1,*-1},...,\Phi^{p-r+1,*-r+1}).
\end{displaymath}

Let $\mathfrak{U}$ be a $\mathcal{E}$-free basis of $X$ such that every $U\in\mathfrak{U}$ is a Stein manifold.   Since $U_1\cap...\cap U_l\cap Y$ is Stein for  $U_1$, ..., $U_l\in\mathfrak{U}$, $\Phi^*$ is an isomorphism on $U_1\cap...\cap U_l$ by Proposition \ref{key}, i.e., $\Phi^*$ satisfies the hypothesis $(*)$. Then $\phi=\Phi^q(X)$ is an isomorphism by Theorem \ref{1.1}.
\end{proof}

\subsection{Several questions}
Let $\pi$, $X$, $Y$, $\widetilde{X}$, $E$, $t$, $\mathcal{V}$, $i_Y$, $i_E$ be defined as them of the beginning  of the present section. Denote the isomorphisms (\ref{1}) and (\ref{2}) by $\psi^{\mathcal{V}}$ and $\psi_c^{\mathcal{V}}$ respectively.  With a similar proof to Theorem \ref{important}, we get isomorphisms as (\ref{def})
\begin{displaymath}
\phi^{\mathcal{V}}:H^k(\widetilde{X}, \pi^{-1}\mathcal{V})\tilde{\rightarrow} H^k(X,\mathcal{V})\oplus \bigoplus_{i=0}^{r-2}H^{k-2-2i}(Y,i_Y^{-1}\mathcal{V})
\end{displaymath}
and
\begin{displaymath}
\phi_c^{\mathcal{V}}:H_{c}^k(\widetilde{X},\pi^{-1}\mathcal{V})\tilde{\rightarrow} H_{c}^k(X,\mathcal{V})\oplus \bigoplus_{i=0}^{r-2}H_{c}^{k-2-2i}(Y,i_Y^{-1}\mathcal{V}),
\end{displaymath}
where we use \cite{M1}, Lemma 4.3 instead of Proposition \ref{key} in the proof. As Question \ref{prob.1}, we want to ask that
\begin{quest}\label{prob.2}
$(1)$ \emph{Are $\phi^{\mathcal{V}}$ and $\psi^{\mathcal{V}}$ inverse to each other?}

$(2)$ \emph{Are $\phi_c^{\mathcal{V}}$ and $\psi_c^{\mathcal{V}}$ inverse to each other?}
\end{quest}

Analogous to Proposition \ref{relation}, it is easily to prove that
\begin{prop}\label{relation2}
$(1)$ If $i_E^*i_{E*}\sigma=h\cup\sigma$ for any $\sigma\in H^{*}(E,i_E^{-1}\pi^{-1}\mathcal{V})$, then $\phi^{\mathcal{V}}$ and $\psi^{\mathcal{V}}$ are inverse  to each other.

$(2)$ If $i_E^*i_{E*}\sigma=h\cup\sigma$ for any $\sigma\in H_c^{*}(E,i_E^{-1}\pi^{-1}\mathcal{V})$, then $\phi_c^{\mathcal{V}}$ and $\psi_c^{\mathcal{V}}$ are inverse to each other.
\end{prop}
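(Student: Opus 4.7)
The plan is to establish the identity $\phi^{\mathcal{V}} \circ \psi^{\mathcal{V}} = \mathrm{id}$; combined with Theorem \ref{blow-up}, which already gives that $\psi^{\mathcal{V}}$ is a bijection, this will force $\phi^{\mathcal{V}}$ to be its two-sided inverse, proving part (1). Part (2) then follows by an identical computation carried out on compactly supported cohomology, since $\pi$, $i_E$, $i_Y$, and $\pi|_E$ are all proper, so that each pullback $\pi^*$, $i_E^*$ and each pushout $\pi_*$, $(i_E)_*$, $(i_Y)_*$, $(\pi|_E)_*$ restricts consistently to compact supports.

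Concretely, I fix a tuple $(\beta, \alpha_0, \ldots, \alpha_{r-2})$ in the source of $\psi^{\mathcal{V}}$ and set $\gamma = \psi^{\mathcal{V}}(\beta, \alpha_0, \ldots, \alpha_{r-2}) = \pi^*\beta + \sum_{i=0}^{r-2}(i_E)_*\bigl(h^i \cup (\pi|_E)^*\alpha_i\bigr)$. First I apply $\pi_*$ and use the functorial identity $\pi_*(i_E)_* = (i_Y)_*(\pi|_E)_*$, the relation $\pi_*\pi^* = \mathrm{id}$ (which holds since $\deg\pi = 1$, by Proposition \ref{i-s}), the projection formula \eqref{pro-formula4}, and the degree-counting fact $(\pi|_E)_*h^i = 0$ for $0 \leq i \leq r-2$ established in the course of proving Lemma \ref{represent}. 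Together these yield $\pi_*\gamma = \beta$. Next I apply $i_E^*$ and use $i_E^*\pi^* = (\pi|_E)^*i_Y^*$ together with the hypothesis $i_E^*(i_E)_*\sigma = h \cup \sigma$ applied to each $\sigma = h^i \cup (\pi|_E)^*\alpha_i$, which gives
\[
i_E^*\gamma = (\pi|_E)^*i_Y^*\beta + \sum_{i=0}^{r-2} h^{i+1} \cup (\pi|_E)^*\alpha_i = \sum_{j=0}^{r-1} h^j \cup (\pi|_E)^*\alpha'_j,
\]
with $\alpha'_0 = i_Y^*\beta$ and $\alpha'_j = \alpha_{j-1}$ for $1 \leq j \leq r-1$.

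The projective bundle formula Corollary \ref{proj-bun}(1) asserts that such a decomposition of a class in $H^*(E, (\pi|_E)^{-1}i_Y^{-1}\mathcal{V})$ as a polynomial in $h$ with coefficients pulled back from $Y$ is unique. Hence the $\alpha'_j$ above are precisely the components that enter the definition of $\phi^{\mathcal{V}}(\gamma)$, and so $\phi^{\mathcal{V}}(\gamma) = (\pi_*\gamma, \alpha'_1, \ldots, \alpha'_{r-1}) = (\beta, \alpha_0, \ldots, \alpha_{r-2})$. This establishes $\phi^{\mathcal{V}} \circ \psi^{\mathcal{V}} = \mathrm{id}$, completing part (1). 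Part (2) is carried out by repeating the same computation with $H^*_c$ in place of $H^*$; the cup product of $h \in H^2(E,\mathbb{R})$ with a compactly supported class lands in compactly supported cohomology, and Corollary \ref{proj-bun}(1) provides the compact-support Leray--Hirsch uniqueness.

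By proving the easier direction $\phi \psi = \mathrm{id}$ rather than $\psi \phi = \mathrm{id}$, and exploiting that $\psi^{\mathcal{V}}$ is already known to be bijective from Theorem \ref{blow-up}, the argument here circumvents the need to develop analogues of the relative Dolbeault sheaves $\mathcal{F}_{X,Y}^{p,q}$ for local systems and to construct the commutative diagram \eqref{commutative1} used in Proposition \ref{relation}. The only genuine obstacle I anticipate is the bookkeeping around degrees and supports when writing out the computation---particularly in part (2), where one must verify that the class $h^i \cup (\pi|_E)^*\alpha_i$ is a legitimate target for the hypothesis $i_E^*(i_E)_*\sigma = h\cup\sigma$ in the compactly supported setting---but no new homological machinery is required beyond what has already been developed in Sections 3--5.
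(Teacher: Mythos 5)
Your argument is correct, but it runs in the opposite direction from the paper's. The paper (in Proposition \ref{relation}, to which Proposition \ref{relation2} is declared analogous) proves $\psi\phi=\mathrm{id}$: starting from $\gamma$ and its components $\phi(\gamma)$, it uses the hypothesis to show that $\gamma-\sum_i(i_E)_*(h^i\cup(\pi|_E)^*\alpha_i)$ dies in $\mathrm{coker}\,(\pi|_E)^*$ and then invokes the commutative diagram \eqref{commutative1} (built from the relative Dolbeault sheaves and the cokernel isomorphism $\mathrm{coker}\,\pi^*\cong\mathrm{coker}\,(\pi|_E)^*$) to lift it to $\pi^*$ of something, identified via $\pi_*$. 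You instead prove $\phi\psi=\mathrm{id}$: you compute $\pi_*\gamma$ and $i_E^*\gamma$ for $\gamma=\psi(\beta,\alpha_0,\dots,\alpha_{r-2})$ directly, using $\pi_*\pi^*=\mathrm{id}$, $\pi_*(i_E)_*=(i_Y)_*(\pi|_E)_*$, the projection formula with $(\pi|_E)_*h^i=0$ for $i\le r-2$, and the hypothesis $i_E^*i_{E*}\sigma=h\cup\sigma$, and then read off the components of $\phi(\gamma)$ from the uniqueness of the Leray--Hirsch decomposition in Corollary \ref{proj-bun}(1). Both directions combine with the bijectivity of $\psi^{\mathcal{V}}$ from Theorem \ref{blow-up} to yield a two-sided inverse, so the conclusions agree. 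What your route buys is that it bypasses the relative-sheaf diagram entirely --- a real advantage here, since for local systems the paper never actually writes down the analogue of \eqref{commutative1} and simply asserts the analogy; your proof needs only the projection formula, functoriality of pullback and pushout along $\pi\circ i_E=i_Y\circ(\pi|_E)$, and the uniqueness in the projective bundle formula, all of which are explicitly available in Sections 3--5 for both ordinary and compactly supported cohomology (the latter because $\pi$, $i_E$ and $\pi|_E$ are proper). Your checks that $h^i\cup(\pi|_E)^*\alpha_i$ lies in $H^{*}(E,i_E^{-1}\pi^{-1}\mathcal{V})$ (resp.\ $H_c^{*}$), so that the hypothesis applies to it, are the right things to verify and they do go through.
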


For Question \ref{prob.2},  we are interested in the conditions in Proposition \ref{relation2}. Actually, we may consider the more general cases:
\begin{quest}\label{prob.3}
\emph{Let $Y$ be an oriented connected submanifold of an oriented connected smooth manifold $X$ and $i:Y\rightarrow X$  the inclusion. Assume that $\mathcal{V}$ is a local system of $\mathbb{R}$-modules of finite rank on $X$ and $[Y]$ is the fundamental class of $Y$ in $X$. Do we have the following formulas:}

$(1)$ $i^*i_*\sigma=[Y]|_Y\cup\sigma$\emph{, for $\sigma\in H^*(Y,i^{-1}\mathcal{V})$?}

$(2)$ $i^*i_*\sigma=[Y]|_Y\cup\sigma$\emph{, for $\sigma\in H_{c}^*(Y,i^{-1}\mathcal{V})$?}
\end{quest}
This question has a positive answer for the weight $\theta$-sheaf $\mathcal{V}=\underline{\mathbb{R}}_\theta$, seeing \cite{M1}, Lemma 4.3. An analogue of Theorem \ref{blow-up} on Bott-Chern cohomology is as follows.
\begin{quest}\label{prob.4}
\emph{Assume that $X$, $Y$, $\pi$, $\widetilde{X}$, $E$, $t$,  $i_E$, $r$ are defined as them of the beginning of the present section. Set $h_{\textrm{BC}}=[t]_{\textrm{BC}}\in H_{\textrm{BC}}^{1,1}(E)$. For any $p$, $q$, is}
\begin{equation}\label{BC-blow-up}
\pi^*+\sum_{i=0}^{r-2}(i_E)_*\circ (h_{\emph{BC}}^i\cup)\circ (\pi|_E)^*:H_{\emph{BC}}^{p,q}(X)\oplus \bigoplus_{i=0}^{r-2}H_{\emph{BC}}^{p-1-i,q-1-i}(Y)\rightarrow H_{\emph{BC}}^{p,q}(\widetilde{X})
\end{equation}
\emph{an isomorphism?}
\end{quest}
If compact complex manifolds $X$ and $Y$ both satisfy the $\partial\bar{\partial}$-lemma, then there is a canonical isomorphism between their Bott-Chern and Dolbeault cohomologies. So the question has a positive answer  by Theorem \ref{blow-up}. On general compact complex manifolds, Yang, S., Yang, X.-D. (\cite{YY}, Thm. 1.2) and Stelzig, J. (\cite{St1}, Prop. 4, Thm. 8 and \cite{St2}, Cor. 12) proved that there exists an isomorphism
\begin{displaymath}
H_{\textrm{BC}}^{p,q}(X)\oplus \bigoplus_{i=0}^{r-2}H_{\textrm{BC}}^{p-1-i,q-1-i}(Y)\cong H_{\textrm{BC}}^{p,q}(\widetilde{X}).
\end{displaymath}
We don't know if this isomorphism can be given by (\ref{BC-blow-up}).
For the noncompact cases, a possible approach is similar to that used in the proof of Theorem \ref{blow-up}, where $i_E^*i_{E*}\sigma=h\cup\sigma$, for $\sigma\in H_{\textrm{BC}}^{*,*}(E)$ and the Mayer-Vietoris sequence for Bott-Chern cohomology are necessary. But we don't know if they hold.

%==============================================================

\end{document}